\newcolumntype{?}{!{\vrule width 1pt}}
\newtheorem{thm}{Theorem}[section]
\newtheorem{prop}[thm]{Proposition}
\newtheorem{lem}[thm]{Lemma}
\newtheorem{cor}[thm]{Corollary}
\theoremstyle{definition}
\newtheorem{definition}[thm]{Definition}
\theoremstyle{remark}
\newtheorem{remark}[thm]{Remark}
\newcommand{\mb}{\mathbb}
\newcommand{\mc}{\mathcal}
\newcommand{\mf}{\mathfrak}
\newcommand{\mr}{\mathrm}
\newcommand{\ms}{\mathscr}
\newcommand{\A}{\mathbb{A}}
\newcommand{\G}{\mathcal{G}}
\newcommand{\Gr}{\mathrm{Gr}}
\newcommand{\Fl}{\mathrm{Fl}}
\newcommand{\Grb}{\overline{\mathrm{Gr}}}
\newcommand{\m}{\mathrm{m}}
\newcommand{\rk}{\mathrm{k}}
\newcommand{\st}{\,|\,}
\newcommand{\U}{\mathrm{U}}
\newcommand{\arxiv}[1]{\href{https://arxiv.org/abs/#1}{\texttt{arXiv:#1}}}
\begin{document}
\title{A refinement of the coherence conjecture of Pappas and Rapoport}

\author{Jiuzu Hong}
\address[J.\,Hong]{Department of Mathematics, University of North Carolina at Chapel Hill, Chapel Hill, NC 27599-3250, U.S.A.}
\email{jiuzu@email.unc.edu}

\author{Huanhuan Yu}
\address[H.\,Yu]{Beijing International Center for Mathematical Research, Peking University, No. 5 Yiheyuan Road Haidian District, Beijing, P.R.China 100871}
\email{huanhuan.yu@outlook.com}

\begin{abstract}
The coherence conjecture of Pappas and Rapoport, proved by Zhu, asserts the equality of dimensions for the global sections of a line bundle over
a spherical Schubert variety in the affine Grassmannian and those of another line bundle over a certain union of Schubert varieties in a partial affine flag variety.

In this paper, we enhance this equality of dimensions to an isomorphism of representations, which leads to interesting consequences in the setting of affine Demazure modules. Zhu's proof of coherence conjcture and our comparison theorem are established by introducing a parahoric Bruhat-Tits group scheme $\mathcal{G}$ over the affine line that is ramified at $0$. We further strengthen this comparison by
equipping any line bundle on the global Schubert variety of $\mathcal{G}$ with a unique equivariant structure under the global jet group scheme of $\mathcal{G}$. 
\end{abstract}

\maketitle
\tableofcontents

\section{Introduction}
The coherence conjecture of Pappas and Rapoport \cite{PR08} has important applications in local models of Shimura varieties, and it was proved by Zhu in \cite{Zhu14} by invoking the geometry of Beilinson-Drinfeld Grassmannians. Despite its significance in arithmetic, the representation-theoretic consequences have not yet been fully explored. In this paper, we shall refine Zhu's theorem by upgrading the dimension equality to an isomorphism of representations, which will be further enhanced by equipping equivariant structures on the line bundles over ramified global affine Grassmannians. Additionally, we will discuss its applications to affine Demazure modules.

Let $G$ be a simple algebraic group over an algebraically closed field $\rk$ of characteristic $p$. Let $\Gr_G$ denote the affine Grassmannian of $G$. Let $L$ denote the line bundle on $\Gr_G$ of central charge 1. For each dominant coweight $\mu$ of $G$,  let $\overline{\Gr}_{G,\mu}$ denote the associated spherical Schubert variety in $\Gr_G$. Let $G((z))$ denote the loop group. Let $\mathcal{P}_\vartheta$ be the parahoric subgroup in $G((z))$ associated to a rational point $\vartheta$ in the fundamental alcove of $G((z))$. Let $Y$ be set of vertices of the minimal facet containing $\vartheta$. 
We then attach a partial affine flag variety $\Fl_Y=G((z)) / \mathcal{P}_\vartheta$. For any dominant coweight $\mu$ of $G$, 
we can attach a subvariety $\mc{A}_Y(\mu)$ in $\Fl_Y$, 
\begin{equation}
\label{eq_union_1}
\mc{A}_Y(\mu):= \bigcup_{w\in W} \overline{\Fl}_{Y,w(\mu)},  \end{equation}
where $W$ is the Weyl group of $G$ and $\overline{\Fl}_{Y,w(\mu)}$ is the Iwahori Schubert variety associated to coweight $w(\mu)$ as an element in the extended affine Weyl group of $W$. The variety $\mc{A}_Y({\mu})$ was originally defined in terms of admissible set, see Lemma \ref{7842349}.  The following is the coherence conjecture of Pappas and Rapoport \cite{PR08}, which was extended and proved by Zhu \cite{Zhu14}. 
\begin{thm}
\label{thm 1.1}
For any ample line bundle $\mathscr{L}$ on $\Fl_Y$ of central charge $c$, we have the following dimension equality
\[ \dim H^0(\overline{\Gr}_{G,\mu}, L^c  )= \dim H^0( \mc{A}_Y({\mu}), \mathscr{L}) ,  \]
where $H^0(\cdot)$ denote the space of global sections of line bundles. 
\end{thm}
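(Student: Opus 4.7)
The plan is to upgrade the comparison between $\overline{\Gr}_{G,\mu}$ and $\mc{A}_Y(\mu)$ from a case-by-case dimension identity into a statement about a flat degeneration, and then conclude by semicontinuity. The geometric backbone should be a Bruhat--Tits group scheme $\mathcal{G}$ over $\A^1$ that is the constant group scheme $G$ over $\A^1 \setminus \{0\}$ and the parahoric $\mathcal{P}_\vartheta$ over the formal disc at $0$. The associated global affine Grassmannian $\Gr_{\mathcal{G}} \to \A^1$ will be an ind-scheme whose fiber over any $x \neq 0$ is $\Gr_G$ and whose fiber at $0$ is $\Fl_Y$, providing the deformation required by the theorem.

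For each dominant coweight $\mu$ I would next build a closed subvariety $\overline{\Gr}_{\mathcal{G},\mu} \subset \Gr_{\mathcal{G}}$ that is flat and proper over $\A^1$ with generic fibers $\overline{\Gr}_{G,\mu}$. Taking $\overline{\Gr}_{\mathcal{G},\mu}$ to be the scheme-theoretic closure of the generic fiber automatically gives flatness; the substantive task is then to identify its special fiber with $\mc{A}_Y(\mu)$. This identification should follow by matching the Iwahori orbits appearing in the special fiber with the Weyl translates $\{w(\mu) : w \in W\}$ dictated by the admissible set, as recorded in the forthcoming Lemma \ref{7842349}.

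I would then construct an extended line bundle $\widetilde{\mathscr{L}}$ on $\Gr_{\mathcal{G}}$ whose fiber at $0$ is the given $\mathscr{L}$ and whose generic fibers are $L^c$. This boils down to computing the Picard group of $\Gr_{\mathcal{G}}$ and observing that, up to pullback from $\A^1$, any line bundle of central charge $c$ on $\Fl_Y$ extends across the family and is matched with $L^c$ on the generic side because the two sides have equal central charges.

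The remaining step is cohomological, and I expect it to be the main obstacle. One needs $H^i(\overline{\Gr}_{\mathcal{G},\mu,x}, \widetilde{\mathscr{L}}|_x) = 0$ for all $i > 0$ and all $x \in \A^1$; granted this, flatness and properness force $\chi(\widetilde{\mathscr{L}}|_x)$ to be independent of $x$ and equal to $h^0$ on every fiber, so the generic-fiber and special-fiber values of $h^0$ coincide and the theorem follows. In positive characteristic I would establish the vanishing by producing a Frobenius splitting of $\overline{\Gr}_{\mathcal{G},\mu}$ compatible with the ample $\widetilde{\mathscr{L}}$ and with the special fiber, in the Mathieu--Mehta--Ramanathan tradition; the delicate point is that $\mc{A}_Y(\mu)$ is reducible, so the splitting must be compatible with every Schubert component $\overline{\Fl}_{Y,w(\mu)}$ simultaneously.
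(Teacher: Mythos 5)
Your proposal follows essentially the same route as the paper (which is Zhu's original argument, recalled in the introduction and reused in Section \ref{652935}): a parahoric Bruhat--Tits group scheme $\mathcal{G}$ over $\A^1$ ramified at $0$, a flat global Schubert variety with generic fiber $\overline{\Gr}_{G,\mu}$ and special fiber $\mc{A}_Y(\mu)$, a line bundle on $\Gr_{\mathcal{G}}$ interpolating $L^c$ and $\mathscr{L}$, and higher cohomology vanishing via Frobenius splitting to force constancy of $h^0$ across the family. The only caveat is that the step you describe as ``matching the Iwahori orbits'' --- identifying the special fiber of the global Schubert variety with the admissible union --- is the main theorem of Zhu's paper rather than a routine verification, but the overall architecture is the intended one.
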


In fact, Theorem \ref{thm 1.1} also holds in the twisted case, where on the left-hand side remains unchanged and on the right-hand side $\Fl_Y$ is a partial affine flag variety of twisted type. For simplicity of illustration, in this introduction we only focus on the untwisted situation and we also assume that $G$ is simply-connected.  We emphasize that all of our results presented in the introduction are proved in full generality.

In his proof of Theorem \ref{thm 1.1}, Zhu considered a parahoric Bruhat-Tits group scheme $\mathcal{G}$ over $\mathbb{A}^1$ that is ramified at $0$. The main geometric result proved by Zhu is that  
the global Schubert variety $\overline{\Gr}^\mu_{\mathcal{G},\mathbb{A}^1}$ of $\mathcal{G}$, which is flat over $\mathbb{A}^1$, has the generic fiber $\overline{\Gr}_{G,\mu}$ and the special fiber $\mc{A}_Y(\mu) $ over $0$. Furthermore, Zhu constructed a line bundle on the global affine Grassmannian $\Gr_\G$ which specializes to powers of $L^c$ and $\ms{L}$ at generic and special fibers. Using Frobenius splitting technique, he derived the equality in Theorem  \ref{thm 1.1} comparing the global sections of $L^c$ and $\ms{L}$.

\vspace{0.1cm}
Any ample line bundle $\ms{L}$ on $\Fl_Y$ corresponds to an affine dominant weight $\Lambda$, and is denoted by $\ms{L}(\Lambda)$. Let $\lambda$ be the restriction of $\Lambda$ to $T$, and it is a dominant weight of $G$. Let $G_\vartheta$ be the centralizer of a multiple of $\vartheta$ in $G$, defined in \eqref{eq_Glambda}. In fact, $G_\vartheta$ is the maximal Levi subgroup of $G$ containing $T$, such that the character $\lambda: T\to \mathbb{G}_m$ can be extended to $G_\vartheta$.  
The following theorem is a special case of Theorem \ref{thm_mainthm} in Section \ref{3208530}.
\begin{thm}\label{thm1.2}
Let $\ms{L}(\Lambda)$ be the ample line bundle on $\Fl_Y$. When ${\rm char}(\rk)=0$, there is an isomorphism of representations of $G_\vartheta$:
\[  H^0(\overline{\Gr}_{G,\mu}, L^c  )\simeq H^0( \mc{A}_Y (\mu), \ms{L}(\Lambda))\otimes \rk_\lambda,  \]
where the action of $G_\vartheta$ on the left side (resp. right side) arises from the unique $G(\mc{O})$-equivariant structure on $L^c$ (resp. $\ms{L}(\Lambda)$). In general, this is an isomorphism of $T$-representations when the characteristic of $\rk$ satisfies a very mild condition.
\end{thm}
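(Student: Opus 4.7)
The plan is to prove the isomorphism by globalizing to Zhu's flat family over $\mathbb{A}^1$ and using the canonical $L^+\mc{G}$-equivariant structure on line bundles over $\Gr_\mc{G}$. Let $\pi:\overline{\Gr}_{\mc{G},\mu}\to\mathbb{A}^1$ be the global Schubert variety and $\ms{L}_\mc{G}$ a line bundle on $\Gr_\mc{G}$ whose generic fiber is $L^c$ and whose fiber over $0$ is $\ms{L}(\Lambda)$ up to a one-dimensional twist. The first step is to verify that $\pi_*\ms{L}_\mc{G}|_{\overline{\Gr}_{\mc{G},\mu}}$ is a locally free $\mc{O}_{\mathbb{A}^1}$-module of constant rank. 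This follows from flatness of $\pi$, the vanishing of higher cohomology of $\ms{L}_\mc{G}$ on every fiber (via the Frobenius splitting used by Zhu), and the constancy of $h^0$ supplied by Theorem \ref{thm 1.1}. Since $\mathbb{A}^1=\mr{Spec}\,\rk[t]$ is the spectrum of a PID, cohomology and base change makes $\pi_*\ms{L}_\mc{G}$ in fact free over $\rk[t]$.

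Next, using the unique $L^+\mc{G}$-equivariant structure on $\ms{L}_\mc{G}$, I would produce an action of $G'_\vartheta$ on $\pi_*\ms{L}_\mc{G}$. Here $G'_\vartheta\hookrightarrow L^+\mc{G}$ as constant loops, and this embedding is defined over $\mathbb{A}^1$: over the generic point it factors through $G'(\mc{O})$, and over $0$ it factors through the parahoric $\mc{P}_\vartheta$ as its Levi subgroup. The $G'_\vartheta$-action commutes with the contracting $\mathbb{G}_m$-action on $\mathbb{A}^1$, which lifts to $\overline{\Gr}_{\mc{G},\mu}$ and to $\ms{L}_\mc{G}$ by Zhu's construction. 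The induced $\mathbb{G}_m$-weight decomposition of $\pi_*\ms{L}_\mc{G}$ is thus $G'_\vartheta$-equivariant, forcing all fibers of $\pi_*\ms{L}_\mc{G}$ to be isomorphic as $G'_\vartheta$-representations.

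It then remains to identify the two fibers explicitly. The generic fiber recovers $H^0(\overline{\Gr}_{G,\mu},L^c)$ with its $G'(\mc{O})$-action restricted to $G'_\vartheta$; the special fiber is $H^0(\mc{A}_Y(\mu),\ms{L}(\Lambda))\otimes\rk_\lambda$, because the restriction of $\ms{L}_\mc{G}$ to the central fiber of $\Gr_\mc{G}$ differs from the naive pullback of $\ms{L}(\Lambda)$ by a one-dimensional character of weight $\lambda$. I expect the main obstacle to be pinning down this twist: determining precisely how the $L^+\mc{G}$-equivariant structure specializes at $0$, and confirming the resulting character is exactly $\lambda$ rather than some other weight of $T'$. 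The uniqueness of the $L^+\mc{G}$-equivariant structure should reduce this ambiguity to a finite weight computation carried out on $T'$. The ${\rm char}(\rk)=0$ hypothesis presumably enters only at the final step, to upgrade a $T'$-equivariant isomorphism (which works in mild positive characteristic via complete reducibility of $T'$-modules) to a full $G'_\vartheta$-equivariant one, using the reductivity of $G'_\vartheta$ in characteristic zero.
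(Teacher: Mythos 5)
Your skeleton is the paper's: degenerate over $\mb{A}^1$ via Zhu's flat global Schubert variety, equip the degenerating line bundle with a fibrewise $G'_\vartheta$-equivariant structure coming from the constant subgroup scheme $G'_\vartheta\times\mb{A}^1$ of a global loop group, deduce that the fibres of $\pi_*\ms{L}_{\mc{G}}$ are isomorphic $G'_\vartheta$-representations, and account for the twist $\rk_\lambda$ at the special fibre. Two deviations are worth noting. First, you invoke the full $L^+\mc{G}$-equivariance of Theorem \ref{thm1.4}; the paper uses only the much weaker $G_\vartheta\times\mb{A}^1$-equivariance of Theorem \ref{equivariant}, built from the $L^-\mc{G}$-action (descent of $\ms{L}(\Lambda)$ to $\mr{Bun}_{\mc{G}}$ via its $\mc{P}_Y^-$-equivariant structure). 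This is not circular, since Theorem \ref{569235} is proved without Theorem \ref{thm1.2}, but it is overkill, and because Theorem \ref{thm1.4} is itself only established in characteristic $0$ your route forfeits the positive-characteristic $T'$-statement. Second, your $\mb{G}_m$-contraction argument for identifying fibres differs from the paper, which instead uses local freeness of the $T'$-weight subsheaves of $\pi_*\mc{L}$ to equate characters and then reductivity of $G'_\vartheta$ in characteristic $0$; a rotation action lifting to $\mc{L}$ compatibly with $G'_\vartheta$ would be cleaner, but that lift is not constructed in this paper (where $\mc{L}$ arises by descent to $\mr{Bun}_{\mc{G}}$ and pullback) and would have to be verified.

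The genuine gap is the twist. The restriction $\mc{L}_0$ \emph{is} $\ms{L}(\Lambda)$ as a line bundle on $\Fl_Y$; it does not differ from a naive pullback by a character. What differ by $\rk_\lambda$ are two equivariant structures on the same line bundle: the one induced at $0$ from the global family, which factors through the parahoric $\mc{P}_Y$ (equivalently $\mc{P}_Y^-$) and is normalized so that the Levi $M_Y\supseteq G_\vartheta$ acts trivially on $\ms{L}(\Lambda)|_{e_0}$, versus the $G'(\mc{O})$-type structure appearing in the statement, under which $T'$ acts on $\ms{L}(\Lambda)|_{e_0}$ by $-\lambda$. Establishing the existence and uniqueness of both structures and computing these fibre weights is the content of Propositions \ref{thm_lgequivarian} and \ref{thm_Gtau} and Corollary \ref{thm_equivcompare}; it requires the Faltings-style divisor construction of the $\mc{P}_Y^-$-equivariant structure together with the degree computation $\deg\bigl(p_Y^*(\ms{L}_i)|_{\mb{P}^1_j}\bigr)=\delta_{ij}$. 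You correctly flag this as the main obstacle, but "a finite weight computation on $T'$" undersells it: without pinning down both normalizations, the isomorphism you obtain is only determined up to an unknown character of $G'_\vartheta$, and identifying that character as exactly $\lambda$ is precisely the assertion to be proved.
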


In the above statement, we regard $G_\vartheta$ as a subgroup of $G(\mathcal{O})$, and the difference of $\rk_\lambda$ arises from comparing equivariant structures of $G(\mathcal{O})$ and $\mathcal{P}_\vartheta$ on $\mathscr{L}(\Lambda)$, cf.\, Proposition \ref{thm_Gtau}. 
We now explain the strategy of the proof of Theorem \ref{thm_mainthm} (more general version of Theorem \ref{thm1.2}). In Section \ref{sec_equivariantbundle}, we use the method of Faltings \cite{Faltings03} to construct a $\mc{P}_\vartheta^-$-equivariant structure on any line bundle $\ms{L}(\Lambda)$ on $\Fl_Y$, where $\mc{P}_\vartheta^-$ denotes the opposite parahoric subgroup in $G((z))$ relative to $\mc{P}_\vartheta$, see Proposition \ref{thm_lgequivarian}. This descends to a line bundle $\mc{L}_{{\rm Bun}_{\mc{G}}}$ on the moduli stack ${\rm Bun}_{\mc{G}}$ of $\mc{G}$-bundles over $\mb{P}^1$, where $\mc{G}$ is the parahoric Bruhat-Tits group scheme over $\mb{P}^1$ with parahoric subgroup $\mathcal{P}_{\vartheta}$ at $0$ and the parahoric subgroup $\mc{P}_{-\vartheta}$ at $\infty$, cf.\,Proposition \ref{45393240}.
 This gives rise to a line bundle $\mc{L}$ on the global affine Grassmannian $\Gr_{\mathcal{G}}$ of $\mathcal{G}$ via the forgetful map $\Gr_{\mc{G}}\to {\rm Bun}_{\mc{G}}$.  The line bundle $\mc{L}$
 specializes to the line bundle $\ms{L}(\Lambda)$ on $\Fl_Y$ at $0$, and specializes to $L^c$ on $\Gr_G$ at a generic  point in $\mb{P}^1$.
In Theorem \ref{431424}, we equip the line bundle $\mc{L}$ with a $L^-\mc{G}$-equivariant structure, where $L^-\mc{G}$ is the global negative loop group over $\mb{P}^1$ with the generic fiber $G[z^{-1}]$ and the special fiber $\mc{P}^-_\vartheta$ at $0$. Furthermore, we present $G_\vartheta\times \mathbb{P}^1$ as a closed subgroup scheme of $L^-\mc{G}$ over $\mb{P}^1$. 
\vspace{0.1cm}

 To prove Theorem \ref{thm_mainthm} (or Theorem \ref{thm1.2}), we also make crucial use of an equivariant realization of $\mc{G}$ in Theorem \ref{paraohoric_thm} of Section \ref{Section_Parahoric}, where we extend the equivariant construction of parahoric group schemes from the simply-connected case in \cite{DH} to general simple algebraic groups. In Lemma \ref{prop_goodprime}, we  show how to choose a rational point in the facet to avoid the characteristic constraints. 
 In Proposition \ref{eq_levi}, 
we describe the canonical Levi subgroup and the subgroup $G_\vartheta$ in any parahoric subgroup. These results will play a crucial role in Section \ref{3208530}.
\vspace{0.1cm}

 As explained above,  Theorem \ref{thm1.2} is a consequence of the $L^-\mc{G}$-equivariant structure on the line bundle $\mc{L}$. We now restrict $\mc{G}$ to $\A^1$, and let $L^+\mc{G}$ be the global jet group scheme of $\mc{G}$.  In fact, Theorem \ref{thm1.2} is also a consequence of a $L^+\mc{G}$-equivariant structure on $\mc{L}$, which will be proved in Section \ref{32895693}.  In Section \ref{sect_unique_ext},  we discuss a general setting of central extensions of affine group schemes over a smooth curve $\Sigma$. We show in Proposition \ref{prop_ext_split} that,  if a central extension of a fiberwise smooth affine group scheme $H$ over $\Sigma$ splits over an open subset of $\Sigma$, then this splitting can automatically extend to $\Sigma$. We also show in Proposition \ref{prop_exact_fppf} that a line bundle on a fiberwise reduced and connected projective flat scheme over $\Sigma$ with an action of a smooth group scheme, gives rise to a central extension of this group scheme in fppf topology.  These results can be applied to obtain the following special case of Theorem \ref{569235}. 
\begin{thm}
\label{thm1.4}
Any line bundle $\mc{L}$ on $\Gr_{\G}$ and its restriction to the global Schubert variety $\Grb^\mu_{\mc{G}, \mathbb{A}^1}$ have a unique $L^+\mc{G}$-equivariant structure.  
\end{thm}

This theorem can be viewed as an enhancement of Theorem \ref{thm1.2} or, more precisely, Proposition \ref{equivariant}, as we can regard $G_\vartheta\times \mb{A}^1$ as a closed subgroup scheme of $L^+\mc{G}$. When $\mc{G}$ is special parahoric at $0\in \A^1$ and ${\rm char} (\rk)=0$, this result was essentially established in a previous work \cite{HY} of the authors. We expect that Theorem \ref{thm1.4} will have  applications in the equivariant K-theory of partial affine flag varieties, which will be the focus of a future research project.

In Section \ref{4924695}, we classify all line bundles on the global affine Grassmannian $\Gr_{\mc{G}}$, where $\mc{G}$ is the parahoric Bruhat-Tits group scheme over $\mathbb{A}^1$ ramified at $0$. In fact, any line bundle $\mc{L}$ on $\Gr_{\G}$ is uniquely determined by its restriction to the fiber $\Gr_{\mc{G}, 0}\simeq\Fl_Y$ over $0$, and thus it is parametrized by an affine dominant weight $\Lambda$, see Proposition \ref{416947}. 
\vspace{0.1cm}

In Section \ref{3985299}, we explore a consequence of Theorem \ref{thm1.2} in the context of affine Demazure modules. By a theorem of Kumar and Mathieu, the dual of $H^0(\overline{\Gr}_{G,\mu}, L^c)$ is an affine Demazure module $D(c,\mu)$, see Theorem \ref{329825}. Furthermore, following a result of Littelmann \cite{Li98}, the dual of $H^0( \mc{A}_Y({\mu}), \mathscr{L}(\Lambda))$ is a summation $\sum_{w\in W} D(\Lambda,w(\mu))$ of Demazure modules in the integrable highest weight module $V(\Lambda)$. Therefore, Theorem \ref{thm1.2} can be reformulated purely algebraically as the following statement, which is the untwisted case of Theorem \ref{589379}.
\begin{thm}
\label{thm1.3}
For any dominant coweight $\mu\in X_*(T)$, there is an isomorphism of $\mf{g}_\vartheta$-modules
$$D(c,\mu)\otimes \rk_\lambda\simeq \sum_{w\in W} D(\Lambda,w(\mu)),$$
where $\mf{g}_\vartheta$ denotes the Lie algebra of $G_\vartheta$ and $\rk_\lambda$ is the $1$-dimensional representation of $\mf{g}_\vartheta$.
\end{thm}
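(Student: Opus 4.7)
The plan is to derive Theorem \ref{thm1.3} by dualizing the geometric isomorphism of Theorem \ref{thm1.2} and identifying each side with (a sum of) affine Demazure modules. For the left side of Theorem \ref{thm1.2}, the classical theorem of Kumar and Mathieu (Theorem \ref{329825}) identifies $H^0(\overline{\Gr}_{G,\mu}, L^c)^*$ with the affine Demazure module $D(c,\mu) \subset V(c\Lambda_0)$. In characteristic zero, differentiating the $G'(\mc{O})$-equivariant structure and restricting to the subalgebra $\mf{g}_\vartheta \subset \mf{g}(\mc{O})$ makes this identification $\mf{g}_\vartheta$-equivariant.

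For the right side, the approach is to analyze $H^0(\mc{A}_Y(\mu), \ms{L}(\Lambda))^*$ via Mayer--Vietoris. Writing $\mc{A}_Y(\mu) = \bigcup_{w \in W}\overline{\Fl}_{Y, w(\mu)}$, the vanishing of higher cohomology for unions of Iwahori Schubert varieties (by Frobenius splitting of Mathieu--Kumar type, essentially the same input driving Theorem \ref{thm 1.1}) gives an exact sequence
\begin{equation*}
0 \to H^0(\mc{A}_Y(\mu), \ms{L}(\Lambda)) \to \bigoplus_w H^0(\overline{\Fl}_{Y,w(\mu)}, \ms{L}(\Lambda)) \to \bigoplus_{w<w'} H^0(\overline{\Fl}_{Y,w(\mu)} \cap \overline{\Fl}_{Y,w'(\mu)}, \ms{L}(\Lambda)).
\end{equation*}
Each factor $H^0(\overline{\Fl}_{Y,w(\mu)}, \ms{L}(\Lambda))^*$ is the Demazure submodule $D(\Lambda, w(\mu)) \subset V(\Lambda)$. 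The key input is Littelmann's result \cite{Li98}, which relates intersections of Iwahori Schubert varieties with intersections of the corresponding Demazure submodules inside $V(\Lambda)$; applied to each pairwise intersection and dualized, the sequence above identifies $H^0(\mc{A}_Y(\mu), \ms{L}(\Lambda))^*$ with the genuine sum of submodules $\sum_{w\in W} D(\Lambda, w(\mu)) \subset V(\Lambda)$, rather than with an abstract quotient of the direct sum.

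Combining these two identifications with the geometric isomorphism of Theorem \ref{thm1.2}, dualizing and rearranging the $\rk_\lambda$-twist, and passing from the $G'_\vartheta$-action to a $\mf{g}_\vartheta$-action by differentiation then yields the desired isomorphism $D(c,\mu) \otimes \rk_\lambda \simeq \sum_{w\in W} D(\Lambda, w(\mu))$ as $\mf{g}_\vartheta$-modules. The character shift by $\rk_\lambda$ arises precisely from the discrepancy between the $G(\mc{O})$- and $\mc{P}_\vartheta$-equivariant structures on the two line bundles, as recorded in Corollary \ref{thm_equivcompare}. The main obstacle I anticipate is the Mayer--Vietoris step: one has to verify that Littelmann's intersection identity applies uniformly across the Weyl-orbit family $\{\overline{\Fl}_{Y,w(\mu)}\}_{w \in W}$, including all higher intersections appearing in the \v{C}ech complex, so that the cokernel description on the dual side collapses exactly to the submodule sum $\sum_w D(\Lambda, w(\mu))$ inside $V(\Lambda)$ and not to a strictly larger quotient of $\bigoplus_w D(\Lambda, w(\mu))$.
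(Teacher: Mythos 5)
Your overall architecture — dualize Theorem \ref{thm1.2}, identify $H^0(\overline{\Gr}_{G,\mu},L^c)^\vee$ with $D(c,\mu)$ via Kumar--Mathieu, identify the dual of $H^0(\mc{A}_Y(\mu),\ms{L}(\Lambda))$ with $\sum_w D(\Lambda,w(\mu))$, and attribute the $\rk_\lambda$-shift to Corollary \ref{thm_equivcompare} — is exactly the paper's. The divergence is in how the identification
$H^0(\mc{A}_Y(\mu),\ms{L}(\Lambda))^\vee\simeq \sum_{w} D(\Lambda,w(\mu))$ is obtained, and your Mayer--Vietoris route has a genuine gap there. Even granting that $H^0$ of each pairwise intersection dualizes to $D(\Lambda,w(\mu))\cap D(\Lambda,w'(\mu))$, the dual of your left-exact sequence only exhibits $H^0(\mc{A}_Y(\mu),\ms{L}(\Lambda))^\vee$ as the cokernel of
$\bigoplus_{w<w'}\bigl(D_w\cap D_{w'}\bigr)\to\bigoplus_w D_w$, and the natural surjection from that cokernel onto $\sum_w D_w\subseteq V(\Lambda)$ need not be injective: the kernel of $\bigoplus_w D_w\to V(\Lambda)$ is in general \emph{not} generated by pairwise relations (three distinct lines in a plane with pairwise intersection zero already give a counterexample). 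So controlling all pairwise (or even all higher) intersections does not by itself force the cokernel to collapse to the submodule sum; you correctly flagged this as the obstacle, but it is not resolved by "applying Littelmann's intersection identity uniformly."

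What actually closes the argument — and what the paper uses — is the surjectivity of the restriction map $H^0(\Fl_Y^\kappa,\ms{L}(\Lambda))\twoheadrightarrow H^0(\mc{A}_Y(\bar\mu),\ms{L}(\Lambda))$ (Theorem \ref{thm_surjective}, from \cite[Theorem 6.1, Corollary 6.2]{LLM}, which is where the standard-monomial-theory/Littelmann input enters). Dualizing this surjection embeds $H^0(\mc{A}_Y(\bar\mu),\ms{L}(\Lambda))^\vee$ into $V(\Lambda)=H^0(\Fl_Y^\kappa,\ms{L}(\Lambda))^\vee$; combined with the elementary injectivity of $H^0(\mc{A}_Y(\bar\mu),\ms{L}(\Lambda))\hookrightarrow\bigoplus_w H^0(\overline{\Fl}_{Y,w(\bar\mu)},\ms{L}(\Lambda))$, whose dual is a surjection $\bigoplus_w D_w\twoheadrightarrow H^0(\mc{A}_Y(\bar\mu),\ms{L}(\Lambda))^\vee$ compatible with the inclusions $D_w\hookrightarrow V(\Lambda)$, one reads off that the image in $V(\Lambda)$ is precisely $\sum_w D_w$, with no analysis of intersections at all. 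I recommend replacing your \v{C}ech/Mayer--Vietoris step by this two-sided squeeze; note that the injectivity of $H^0(\mc{A}_Y(\bar\mu),\ms{L}(\Lambda))^\vee\to V(\Lambda)$ you would ultimately need to prove is literally equivalent to that surjectivity statement, so there is no way around invoking it.
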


Demazure modules for Kac-Moody groups have been extensively studied in the literature, including their character formula, cf.\,\cite{Ku,Ma88}. Theorem \ref{thm1.3} reveals an interesting relationship among different Demazure modules, that has not been previously noticed. In Section \ref{sect_order}, we discuss the inclusion relation among Demazure modules $\{ D(\Lambda,w(\mu)) \,|\, w\in W \}$, 
as well as the closure relation among the corresponding Iwahori Schubert varieties in $\Fl_Y$. We also give a description of maximal Iwahori Schubert varieties contained in $\mc{A}_Y(\mu)$, which contribute to the irreducible components of $\mc{A}_Y(\mu)$, see Corollary \ref{5982347}. In the Appendix \ref{293489}, we present some explicit examples to illustrate the phenomenon described in Theorem \ref{thm1.3}.
\vspace{0.1cm}

Finally, we reiterate that our results are general, in the sense that the group 
$G$ can be non-simply connected, and the loop group can be of twisted type. This introduces significant difficulties and subtleties, particularly in the formulating the above theorems for non-neutral components of partial affine flag varieties.

\vspace{0.2cm}

{\bf Acknowledgement}. We would like to thank P.\,Belkale,  S.\,Kato, S.\,Kumar, S.\,Nie, and X.\,Zhu for helpful discussions. We are especially grateful to R.\,Travkin for discussions that led to a strengthening of Theorem \ref{569235} and a simplified proof.  We also thank P.\,Littelmann for valuable email correspondence. J.\,Hong was partially supported by NSF grant DMS-2001365 and Simons grant MPS-TSM-00007468.

The second version of this paper underwent substantial revisions and many improvements, most of which were carried out during J. Hong's visits to the Sydney Mathematical Research Institute and the Max Planck Institute for Mathematics. He gratefully acknowledges their hospitality and financial support.

\section{Parahoric group revisited}
\label{Section_Parahoric}
In Section \ref{sec_parahoric}, we review and extend some results in \cite{DH} on the equivariant construction of parahoric group schemes over the ring of formal power series. In Section \ref{section_flag}, we discuss how to choose a rational point in the facet to avoid the characteristic constraint. We also describe a natural Levi subgroup in any parahoric group and introduce the group $G_\vartheta$ which will be used in Section \ref{3208530}. 
\subsection{Notations}\label{Notations}
Let $\rk$ be an algebraically closed field with characteristic $p$. Let $G$ be a connected simple algebraic group over $\rk$. We fix a Borel subgroup $B\subseteq G$ and a maximal torus $T\subseteq B$. Denote by $X_*(T)$ the lattice of cocharacters of $T$. Let $G_{\mr{ad}}$ be the adjoint group of $G$ and let $T_{\mr{ad}}$ be the image of $T$ in $G_{\mr{ad}}$. Similarly, let $X_*(T_{\rm ad})$ denote the lattice of cocharacters of $T_{\rm ad}$.

Let $\tau$ be a diagram automorphism of $G$ of order $r$ preserving $B$ and $T$, and a pinning with respect to $B$ and $T$. Note that $\tau$ can be trivial, and $r$ can only be either 1, 2, or 3. Throughout this paper, we always assume $p\not= r$.  Let $\mc{K}:= \rk((z))$ and $\mc{O}:=\rk[[z]]$ be the ring of formal Laurent series and formal power series respectively. We fix an $r$-th primitive unit root $\epsilon$ and define $\tau(z):=\epsilon^{-1}z$. Set $\bar{\mc{K}}:=\rk((z^r))$, and $\bar{\mc{O}}:=\rk[[z^r]]$. 

Let $G^{\tau,\circ}$ (resp. $B^{\tau,\circ}$, $T^{\tau,\circ}$ ) denote the neutral component of the 
$\tau$ fixed point subgroup of $G$ (resp. $B$, $T$). Then, $B^{\tau,\circ}$ is a Borel subgroup of $G^{\tau,\circ}$ and $T^{\tau,\circ}$ is a maximal torus. In the following sections, $\square^\circ$ always stands for the neutral component of $\square$. Note that when $G$ is simply-connected or adjoint, $G^{\tau,\circ}=G^\tau$ and $T^{\tau,\circ}=T^\tau$.

Let $I_\tau$ (resp.\,$I$) be the set of vertices of the Dynkin diagram of $G^\tau$ (resp.\,$G$). Then, $I_\tau$ can be identified with the set of $\tau$-orbits in $I$. Let $X_N^{(r)}$ be the affine Dynkin diagram associated to the twisted loop group $G(\mc{K})^\tau$.  By adjoining an affine node $o$,  $\hat{I}_\tau:=I_\tau\sqcup \{o\}$ can be naturally regarded as the set of vertices of $X_N^{(r)}$. In our convention, $o$ corresponds to the vertex $0$ if $X_N^{(r)}\neq A_{2\ell}^{(2)}$ or the vertex $\ell$ if $X_N^{(r)}=A_{2\ell}^{(2)}$ in \cite[Chapter 4]{Kac90}. 

Let $(a_i)_{i\in \hat{I}_\tau}$ be the Kac labels in the table in \cite[Chapter 4]{Kac90} and let $(\check{a}_i)_{i\in \hat{I}_\tau}$ be the dual Kac labels. Note that  $\check{a}_o=2$ if $X_N^{(r)}=A_{2\ell}^{(2)}$, and $\check{a}_o=1$ otherwise; moreover, $a_o=1$ for any cases.  Let $\{\alpha_i\st i\in \hat{I}_\tau\}$ be the set of simple affine roots and let $\{\check\alpha_i\st i\in \hat{I}_\tau\}$ be the set of simple affine coroots. Set 
\begin{equation}
\label{eq_theta_0}
\textstyle\theta_0:=\sum_{i\in I_\tau} a_i\alpha_i,
\end{equation}
We remark that $\theta_0$ agrees with the definition in \cite[Proposition 8.3]{Kac90}, which can be described as in \eqref{eq_theta0}. Let $\{\omega_i\st i\in I_\tau\}$ be the set of fundamental weights of the simply connected cover $(G^{\tau,\circ})'$ of $G^{\tau,\circ}$.

Following \cite[\S 6.2]{Kac90}, we can associate the following vector space 
$$\mf{h}^\tau_{\mb{Q}}:=\bigoplus_{i\in I_\tau} \mb{Q}\check{\alpha}_i,\quad \hat{\mf{h}}^\tau_\mb{Q}:=\bigoplus_{i\in \hat{I}_\tau} \mb{Q}\check{\alpha}_i\oplus \mb{Q}d,$$
where $d$ is the scaling element.  We can also define the central element as $\textstyle K=\sum_{i\in \hat{I}_\tau} \check{a}_i \check{\alpha}_i$.

For each $i\in \hat{I}_\tau$, let $\Lambda_i\in (\hat{\mf{h}}^\tau_\mb{Q})^*$ be the associated affine fundamental weight, i.e. $\langle\Lambda_i, \check{\alpha}_j \rangle=\delta_{ij}$, and $\langle \Lambda_i, d \rangle=0$.
Then $ (\hat{\mf{h}}^\tau_\mb{Q})^*$ admits the following decomposition 
\begin{equation}
\label{delta_elem}
(\hat{\mf{h}}^\tau_\mb{Q})^*=(\mf{h}^\tau_\mb{Q})^*\oplus \mb{Q}\Lambda_o\oplus \mb{Q} \delta ,\end{equation}
 where $\delta=\sum_{i\in \hat{I}_\tau} a_i \alpha_i$.  
Let $(\cdot|\cdot)$ be the normalized invariant form on $\hat{\mf{h}}^\tau_\mb{Q}$ defined as in \cite[\S 6.2]{Kac90}.  This bilinear form induced a map $\nu: \hat{\mf{h}}^\tau_\mb{Q}\to (\hat{\mf{h}}^\tau_\mb{Q})^*$. For any $i\in \hat{I}_\tau$, we have 
\begin{equation}\label{eq_dualmap}
\nu(\check{\alpha}_i)=\frac{a_i}{\check{a}_i}\alpha_i, \quad \nu(K)=\delta.
\end{equation}
Note that we have the following natural identifications: $\mf{h}^\tau_{\mb{Q}}\simeq X_*(T)^\tau_{\mathbb{Q}}\simeq X_*(T_{\rm ad})^\tau_{\mb{Q}}$. Moreover, $\nu$ restricts to a map $\nu: \mf{h}^\tau_\mb{Q}\simeq (\mf{h}^\tau_\mb{Q})^*$.

\subsection{Parahoric group schemes from the equivariant point of view}\label{sec_parahoric}  

Let $\ms{G}_{\bar{\mc{K}}}$ be the quasi-split group scheme $\mr{Res}_{\mc{K}/\bar{\mc{K}}}(G_{\mc{K}})^\tau$ over $\bar{\mc{K}}$, i.e.\,the subgroup scheme of $\tau$-invariants of the Weil restriction $\mr{Res}_{\mc{K}/\bar{\mc{K}}}(G_{\mc{K}})$. Note that any tamely-ramified quasi-split simple group scheme over $\bar{\mc{K}}$ can be realized from such construction.

Let $S$ denote the group scheme $(T^{\tau,\circ})_{\bar{\mc{K}}}$, which is a maximal $\bar{\mc{K}}$-split torus of $\ms{G}_{\bar{\mc{K}}}$. Then, the relative root system $\mc{R}$ for $(\ms{G}_{\bar{\mc{K}}},S)$ can be identified with the quotient of the root system $R$ of $(G,T)$ by the $\tau$-action. We remark that the root system $R(G^{\tau,\circ}, T^{\tau,\circ})$ of $(G^{\tau,\circ}, T^{\tau,\circ})$ is naturally a subsystem of $\mc{R}$. In fact, $R(G^{\tau,\circ}, T^{\tau,\circ})=\mc{R}$ if and only if $(G,r)\not= (A_{2\ell}, 2)$.

Let $\Delta_{{\alpha}}\subseteq R$ denote the set of preimages of $\alpha\in \mc{R}$ via the projection map $R\to \mc{R}$. For each $\alpha\in \mc{R}$, we denote by $\mc{K}_\alpha$ the subfield of $\mc{K}$ which is fixed by the stabilizer of any $\tilde{\alpha}\in \Delta_{\alpha}$ under the action of the cyclic group $\langle \tau \rangle $. Let  $d_\alpha$ be the cardinality of the set $\Delta_\alpha$. Then, $\mc{K}_\alpha=\rk((z^{r/d_\alpha}))$.

We now recall the root subgroup $\ms{U}_\alpha$ of $\ms{G}_{\bar{\mc{K}}}$ for $\alpha\in \mc{R}$. When $2\alpha, \frac{1}{2}\alpha\notin \mc{R}$, the root subgroup $\ms{U}_\alpha$ is given by 
$$\Big(\prod_{\tilde{\alpha}\in \Delta_\alpha} U_{\tilde{\alpha}}(\mc{K}_\alpha)\Big)^\tau,$$
where $U_{\tilde{\alpha}}$ denote the root subgroup of $G$ associated to $\tilde{\alpha}$. 
Since the $\tau$-action on $\Delta_{\alpha}$ is transitive, the root subgroup homomorphism $x_{\tilde{\alpha}}:\mb{G}_{a}\to U_{\tilde{\alpha}}$ induces a Chevalley-Steinberg pinning $\{ x_\alpha: \mc{K}_{\alpha}\to \ms{U}_{\alpha} \,|\, \alpha\in \mc{R} \}$ of $\ms{G}_{\bar{\mc{K}}}$, cf.\,\cite[4.1.5]{BT84}. Let $\nu$ be the valuation of $\mc{K}$ with $\nu(z)=1$. Via the pinning $x_\alpha$, the restriction of $\nu$ on $\mc{K}_\alpha$ induces a valuation ${\nu}$ on $\ms{U}_\alpha$ (by abuse of notation). For every $\ell\in \mb{R}$, we define $\ms{U}_{\alpha,\ell}:=\nu^{-1}[\ell, \infty] \subseteq \ms{U}_\alpha$.

When $2\alpha\in \mc{R}$, we have $r=2$, $\Delta_\alpha=\{\tilde{\alpha},\tau(\tilde{\alpha})\}$ is the set of two adjacent roots in $R$, and $\Delta_{2\alpha}=\{\tilde{\alpha}+\tau(\tilde{\alpha}) \}$. In this case, we have 
$$\ms{U}_{\alpha}=(U_{\tilde{\alpha}}(\mc{K})U_{\tilde{\alpha}+\tau(\tilde{\alpha})}(\mc{K})U_{\tau(\tilde{\alpha})}(\mc{K}))^\tau, \quad \ms{U}_{2\alpha}=U_{\tilde{\alpha}+\tau(\tilde{\alpha})}(\bar{\mc{K}}).$$
There are natural valuations defined on $\ms{U}_{\alpha}$ and $\ms{U}_{2\alpha}$, and we can accordingly associate subgroups $\ms{U}_{\alpha,\ell}$ and $\ms{U}_{2\alpha, \ell}$ for every $\ell\in \mb{R}$, cf.\,\cite[4.1.9]{BT84}. By abuse of notation,  we will use the notation $\nu$ uniformly for the valuation on the root subgroup $\ms{U}_{\alpha}$ for any $\alpha\in \mc{R}$.

\begin{definition}\label{def_parahoric}
Following \cite[Definition 2.1.2]{DH} and references therein, for any $\vartheta\in X_*(S)_\mb{R}\simeq X_*(T)^\tau_ \mb{R}$, we define the parahoric subgroup $\mc{P}_\vartheta$ to be the subgroup of $G(\mc{K})^\tau$  generated by $\{\ms{U}_{\alpha,-\langle\vartheta,\alpha\rangle}\}_{\alpha\in\mc{R}}$ and $T(\mc{O})^{\tau,\circ}$ (the neutral component of $T(\mc{O})^{\tau}$). Then, there exists a unique affine smooth group scheme $\ms{G}_\vartheta$ over $\bar{\mc{O}}$, which extends $\ms{G}_{\bar{\mc{K}}}$ and satisfies that $\ms{G}_\vartheta(\bar{\mc{O}})=\mc{P}_\vartheta$. 
\end{definition}

Similarly, we define a subgroup $\mc{P}_\vartheta^-$ of $G[z,z^{-1}]^\tau$ as follows. Let $\nu_\infty$ be the valuation of $\rk[z,z^{-1}]$ with $\nu_\infty(z^{-1})=1$. When $2\alpha, \frac{1}{2}\alpha\notin \mc{R}$, via the pinning $x_\alpha$, the restriction of  $\nu_\infty$ on $\rk[z,z^{-1}]\cap\mc{K}_\alpha$ induces a valuation $\nu_\infty$ on $\ms{U}_\alpha\cap G[z,z^{-1}]^\tau$. For every $\ell\in \mb{R}$, we define $\ms{U}^-_{\alpha,\ell}:=\nu_\infty^{-1}[\ell,\infty] \subseteq \ms{U}_\alpha\cap G[z,z^{-1}]^\tau$. When $2\alpha\in \mc{R}$, we can define subgroups $\ms{U}_{\alpha,\ell}^-$ and $\ms{U}_{2\alpha, \ell}^-$ for every $\ell\in \mb{R}$ in a similar way as explained before. Then, $\mc{P}_\vartheta^-$ is the subgroup of $G[z,z^{-1}]^\tau$ generated by $\{\ms{U}^-_{\alpha,\langle\vartheta,\alpha\rangle}\}_{\alpha\in\mc{R}}$ and $T^{\tau,\circ}$.

\begin{remark}
The valuation on $\rk((z))$ used in our definition is different from the one used in \cite{DH}, and the parahoric subgroup $\mc{P}_\vartheta$ defined in this paper coincides with the parahoric subgroup $\mc{P}_{r\vartheta}$ defined in loc.\,cit.  
\end{remark}

Fix a point $\vartheta\in X_*(T)^\tau_ \mb{Q}$.  Let $m$ be the minimal positive multiple of $r$ such that $\frac{m}{r}\vartheta\in X_*(T_{\mr{ad}})^\tau$. Identify the lattice $X_*(T_{\mr{ad}})^\tau$ with $\bigoplus_{i\in I_\tau} \mb{Z}\cdot \check{\omega}_i \subseteq \mf{h}^\tau_{\mb{Q}}$, where $\check{\omega}_i$ represents the fundamental coweight of $G^\tau$ associated to $i\in I_\tau$. Then, we can write 
\[\textstyle \frac{m}{r}\vartheta=\sum_{i\in I_\tau}s_i\check{\omega}_i,\] 
where $s_i$ are integers. Suppose that $p\nmid m$.
Fix a primitive $m$-th root of unity $\epsilon_m$. We associate an element $(\epsilon_m)^{m\vartheta/r}\in T_{\rm ad}$, and
we define an automorphism of $G$ 
\begin{equation}
\label{eq_sigma}
\sigma:=\tau\circ \mr{Ad}_{(\epsilon_m)^{m\vartheta/r}}.
\end{equation}
\begin{lem}
\label{Order_lemma}
The order of $\sigma$ is $m$.
\end{lem}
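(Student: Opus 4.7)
The plan is to check that $\sigma^m=\mathrm{id}$ and that no smaller exponent annihilates $\sigma$, using that everything factors through the cocharacter lattice of $T_{\mathrm{ad}}$. The key setup observation is that since $\vartheta\in X_*(T)^\tau_{\mathbb{Q}}$, the adjoint cocharacter $\mu:=\frac{m}{r}\vartheta\in X_*(T_{\mathrm{ad}})^\tau$ is $\tau$-fixed, so $t:=\mu(\epsilon_m)\in T_{\mathrm{ad}}$ is $\tau$-invariant. Hence $\tau$ commutes with $\mathrm{Ad}_t$, and I can expand
\[
\sigma^n=\tau^n\circ\mathrm{Ad}_{t^n}
\]
for every $n\geq 1$, which reduces the analysis to two independent conditions on $n$.

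First I would handle $\sigma^m=\mathrm{id}$: since $r\mid m$, $\tau^m=\mathrm{id}$; and $t^m=\mu(\epsilon_m)^m=\mu(\epsilon_m^m)=\mu(1)=1$ because $\mu$ is an honest integral cocharacter, so $\mathrm{Ad}_{t^m}=\mathrm{id}$. For the lower bound, suppose $\sigma^n=\mathrm{id}$, so that $\tau^n=\mathrm{Ad}_{t^{-n}}$ is inner. Since $\tau$ represents an element of order exactly $r$ in $\mathrm{Out}(G)$ (the natural map from diagram automorphisms to $\mathrm{Out}(G)$ being injective for connected simple $G$), this forces $r\mid n$, hence $\tau^n=\mathrm{id}$ and in turn $t^n=1$ in $T_{\mathrm{ad}}$. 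Testing $t^n=1$ against characters $\chi\in X^*(T_{\mathrm{ad}})$ gives $\epsilon_m^{n\langle\chi,\mu\rangle}=1$ for all $\chi$, i.e.\ $m\mid n\langle\chi,\mu\rangle$; by the perfect pairing between $X^*(T_{\mathrm{ad}})$ and $X_*(T_{\mathrm{ad}})$, this is equivalent to $\tfrac{n\mu}{m}=\tfrac{n}{r}\vartheta\in X_*(T_{\mathrm{ad}})$, and the $\tau$-invariance of $\vartheta$ automatically upgrades this to $\tfrac{n}{r}\vartheta\in X_*(T_{\mathrm{ad}})^\tau$. Thus $n$ is a positive multiple of $r$ with $\tfrac{n}{r}\vartheta\in X_*(T_{\mathrm{ad}})^\tau$, and the minimality in the definition of $m$ yields $n\geq m$.

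The step I expect to require the most care is the outer-automorphism dichotomy that produces $r\mid n$ from $\tau^n\in\mathrm{Inn}(G)$; this hinges on the classical fact that for a connected simple algebraic group the outer automorphism group is the group of Dynkin diagram automorphisms, so the order of $\tau$ in $\mathrm{Out}(G)$ is genuinely $r$. Everything else is a routine lattice computation that is insensitive to the characteristic so long as a primitive $m$-th root of unity exists in $\rk$ — exactly what the hypothesis $p\nmid m$ supplies.
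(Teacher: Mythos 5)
Your proof is correct, and it follows the same overall skeleton as the paper's: verify $\sigma^m=\mathrm{id}$ directly, then show that any $n$ with $\sigma^n=\mathrm{id}$ must be a multiple of $r$ satisfying a lattice divisibility condition that forces $n\geq m$. The execution of the lower bound differs in both key steps, though. The paper evaluates $\sigma^{m'}$ on the simple root subgroups $x_{\tilde{\alpha}_j}$, reads off the conditions $r\mid m'$ and $m\mid m's_i$ for the coefficients of $\frac{m}{r}\vartheta=\sum_i s_i\check{\omega}_i$, and closes by asserting that $\gcd\{s_i\}$ is coprime to $m$. You instead use the commutativity of $\tau$ with $\mathrm{Ad}_t$ to split $\sigma^n=\tau^n\circ\mathrm{Ad}_{t^n}$, obtain $r\mid n$ from the injectivity of pinned (diagram) automorphisms into $\mathrm{Out}(G)$, and convert $t^n=1$ in $T_{\mathrm{ad}}$, via the perfect pairing with $X^*(T_{\mathrm{ad}})$, into $\frac{n}{r}\vartheta\in X_*(T_{\mathrm{ad}})^\tau$, finishing by the minimality in the definition of $m$. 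Since $X^*(T_{\mathrm{ad}})$ is generated by the simple roots, the divisibility conditions you derive are equivalent to the paper's; but your final step is the more robust one. The paper's coprimality claim can actually fail at the boundary (e.g.\ $r=2$ and $\vartheta=2\check{\omega}_1\in X_*(T_{\mathrm{ad}})^\tau$ gives $m=2$ and $s_1=2$), although the lemma's conclusion still holds there because $r\mid m'$ already suffices; appealing directly to the minimality of $m$, as you do, closes the argument uniformly and is arguably the cleaner way to phrase what the paper intends.
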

\begin{proof}
Clearly, we have $\sigma^m=\mr{id}$. Suppose $\mr{ord}(\sigma)=m'$. Then, $m'\mid m$. Recall that $I_\tau$ can be identified with the set of $\tau$-orbits in $I$. For each $i\in I_\tau$, let $\eta_i$ be the corresponding $\tau$-orbit in $I$. Pick $j\in \eta_i$ and  let $x_{\tilde{\alpha}_j}:\mb{G}_a\to G$ be the root subgroup associated to the simple root $\tilde{\alpha}_j$ of $G$. Then, $\tilde{\alpha}_j|_{ T^{\tau,\circ}}=\alpha_i$ and we have 
$$x_{\tilde{\alpha}_j}(u)=\sigma^{m'}(x_{\tilde{\alpha}_i}(u))=x_{\tau^{m'}\tilde{\alpha}_i}\big((\epsilon_m)^{m'\langle h,\alpha_i\rangle}u\big)=x_{\tau^{m'}\tilde{\alpha}_i}\big((\epsilon_m)^{m's_i}u\big),\quad u\in \rk .$$ 
Hence, $m'$ is a multiple of $r$, and $m's_i$ is a multiple of $m$ for all $i\in I_\tau$. Therefore, $m'\cdot\gcd\{s_i\st i\in I_\tau\}$ is a multiple of $m$. From the definition of $s_i$, we have $\gcd\{s_i\st i\in I_\tau\}$ and $m$ are coprime. Thus, $m\mid m'$ which implies $m=m'=\mr{ord}(\sigma)$.
\end{proof}

Set $h:= \frac{m}{r}\vartheta\in  X_*(T_{\mr{ad}})^\tau$. Set $t=z^{r/m}$, $\mc{O}_t:=\rk[[t]]$, and $\mc{K}_t:=\rk((t))$.  Define an action of $\sigma$ on $t$ by $\sigma(t):= (\epsilon_m)^{-1}t$. Then, $\bar{\mc{O}}=\rk[[z^r]]=\rk[[t^m]]$ is the set of $\sigma$-fixed elements in $\mc{O}_t$.  Consider the group scheme $\mc{H}:=\mr{Res}_{\mc{O}_t/\bar{\mc{O}}}(G_{\mc{O}_t})^\sigma$  over $\bar{\mc{O}}$, which is the subgroup scheme of $\sigma$-invariants of the Weil restriction $\mr{Res}_{\mc{O}_t/\bar{\mc{O}}}(G_{\mc{O}_t})$.
We define the neutral component $\mc{H}^\circ$ of $\mc{H}$ to be the union of the neutral components $(\mc{H}_a)^\circ$ of fibers at every $a\in \mr{Spec}(\bar{\mc{O}})$, cf.\,\cite[1.1.12]{BT84}. Then, $\mc{H}^\circ$ is an open subscheme of $\mc{H}$, denoted by $\ms{G}_{\sigma}$, i.e.
\begin{equation}\label{eq_Gsigma}
\ms{G}_{\sigma}:=\mr{Res}_{\mc{O}_t/\bar{\mc{O}}}(G_{\mc{O}_t})^{\sigma,\circ}.
\end{equation}

We can regard $G(\mc{O}_t)^\sigma$ as a pro-algebraic group over $\rk$. Let $G(\mc{O}_t)^{\sigma,\circ}$ be the connected component of $G(\mc{O}_t)^\sigma$
as a pro-algebraic group. 

The following theorem generalizes \cite[Theorem 4.1.2]{DH}.

\begin{thm}
\label{paraohoric_thm}
When $p\not \mid m$, 
there is a unique isomorphism of group schemes over $\bar{\mc{O}}$:
\begin{equation}\label{eq_phi} 
\phi_0:\ms{G}_\vartheta\simeq \ms{G}_{\sigma}.
\end{equation}
such that over its $\bar{\mc{O}}$-points, $\phi_0:\mc{P}_\vartheta\simeq G(\mc{O}_t)^{\sigma,\circ}$ is the restriction of the isomorphism $\mr{Ad}_{t^h}:G(\mc{K})^{\tau}\simeq G(\mc{K}_t)^{\sigma}$.
\end{thm}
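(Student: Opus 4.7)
My plan is to lift the group isomorphism $\mr{Ad}_{z^\vartheta}:G(\mc{K})^\tau\simeq G(\mc{K}_t)^\sigma$ to an isomorphism of $\bar{\mc{O}}$-group schemes by combining (a) a generic-fibre identification coming from intertwining the $\tau$- and $\sigma$-actions on $G_{\mc{K}_t}$, (b) an explicit matching of $\bar{\mc{O}}$-points on generators, and (c) the uniqueness of the Bruhat--Tits integral model as a smooth affine $\bar{\mc{O}}$-group scheme with prescribed generic fibre and group of $\bar{\mc{O}}$-points.

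\textbf{Step 1: generic fibre.} Set $\eta:=(\epsilon_m)^{m\vartheta/r}\in T_{\mr{ad}}$, so that $\sigma=\tau\circ\mr{Ad}_\eta$. Extending $\tau$ to $\mc{K}_t$ by $\tau(t)=\epsilon_m^{-1}t$ (which is compatible with $\tau(z)=\epsilon^{-1}z$), a short direct computation yields $\sigma(z^\vartheta)=\eta^{-1}z^\vartheta$, and hence
\[
\mr{Ad}_{z^{-\vartheta}}\circ\sigma\circ\mr{Ad}_{z^\vartheta}=\mr{Ad}_{\eta^{-1}}\circ\sigma=\tau
\]
on $G(\mc{K}_t)$, where the last equality uses that $\tau$ commutes with $\mr{Ad}_\eta$ since $\eta$ is $\tau$-fixed. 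Thus $\mr{Ad}_{z^\vartheta}$ intertwines the $\tau$- and $\sigma$-actions and carries $G(\mc{K}_t)^\tau=G(\mc{K})^\tau$ bijectively onto $G(\mc{K}_t)^\sigma$; by functoriality of Weil restriction this lifts to an isomorphism of generic fibres $\ms{G}_{\bar{\mc{K}}}\simeq\mr{Res}_{\mc{K}_t/\bar{\mc{K}}}(G_{\mc{K}_t})^{\sigma,\circ}$, after restricting to neutral components.

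\textbf{Step 2: $\bar{\mc{O}}$-points.} I would then verify $\mr{Ad}_{z^\vartheta}(\mc{P}_\vartheta)=G(\mc{O}_t)^{\sigma,\circ}$ on the generators of $\mc{P}_\vartheta$ supplied by \Cref{def_parahoric}. The torus part is easy: $\mr{Ad}_{z^\vartheta}$ is trivial on $T$, and $T(\mc{O})^{\tau,\circ}$ embeds canonically into $T(\mc{O}_t)^{\sigma,\circ}$. For a root subgroup, the Chevalley--Steinberg pinning gives $\mr{Ad}_{z^\vartheta}(x_\alpha(u))=x_\alpha(z^{\langle\vartheta,\alpha\rangle}u)$, converting the condition $\nu(u)\geq-\langle\vartheta,\alpha\rangle$ defining $\ms{U}_{\alpha,-\langle\vartheta,\alpha\rangle}$ into non-negativity in the $t$-adic valuation; the image therefore lies in $G(\mc{O}_t)^\sigma$, and in its neutral component because each root subgroup is additive and connected. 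The reverse inclusion follows from an Iwahori-type decomposition of $G(\mc{O}_t)^{\sigma,\circ}$ into $\sigma$-stable root subgroups and torus, which reduces surjectivity to the factor-wise matching already established. Combining this with Step 1 and the characterisation of $\ms{G}_\vartheta$ as the unique smooth affine $\bar{\mc{O}}$-model of $\ms{G}_{\bar{\mc{K}}}$ with $\bar{\mc{O}}$-points $\mc{P}_\vartheta$ produces the scheme isomorphism $\phi_0$.

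\textbf{Main obstacle and uniqueness.} The principal difficulty is extending the simply-connected case \cite[Theorem 4.1.2]{DH} to a general simple $G$: both $G^\sigma$ and $T^\tau$ may have non-trivial component groups, and the neutral-component conventions in \Cref{def_parahoric} and in \eqref{eq_Gsigma} have to be matched compatibly. I would descend through the central isogeny $\pi:G'\to G$ (with $p\nmid|\ker\pi|$ coming from $p\nmid m$): the DH isomorphism for $G'$ is equivariant for the relevant finite central kernels on both sides, and quotienting delivers $\phi_0$ with the required neutral-component matching handled automatically. Uniqueness of $\phi_0$ is then essentially automatic: any two $\bar{\mc{O}}$-morphisms between $\bar{\mc{O}}$-flat affine group schemes that agree on $\mc{P}_\vartheta$ agree on the generic fibre by Zariski density of $\mc{P}_\vartheta$ in $\ms{G}_{\bar{\mc{K}}}$, and hence coincide by flatness.
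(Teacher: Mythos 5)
Your Steps 1 and 2 reproduce the construction of \cite[Theorem 4.1.2]{DH} and are sound in outline: the intertwining computation $\mr{Ad}_{z^{-\vartheta}}\circ\sigma\circ\mr{Ad}_{z^\vartheta}=\tau$ and the generator-by-generator matching of $\bar{\mc{O}}$-points are exactly how the simply-connected case goes, and your uniqueness argument via Zariski density and flatness is fine. The genuine gap is in your reduction of the general case to the simply-connected one by descent along $\pi:G'\to G$. You assert that $p\nmid|\ker\pi|$ ``comes from $p\nmid m$'', but there is no such implication: $m$ is the minimal multiple of $r$ with $\frac{m}{r}\vartheta\in X_*(T_{\mr{ad}})^\tau$ and is unrelated to $|\pi_1(G)|$ (for instance $G=\mr{PGL}_2$, $\tau$ trivial, $\vartheta=0$ gives $m=1$ while $|\ker\pi|=2$, so $p=2$ is permitted by the hypotheses of the theorem but destroys the \'etaleness your descent needs). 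Without \'etaleness of $\ker\pi$ the quotient $\ms{G}'_\vartheta/\ker\pi$ need not be smooth and its $\bar{\mc{O}}$-points need not be the quotient of $\mc{P}'_\vartheta$. Even when $\ker\pi$ is \'etale, ``quotienting delivers $\phi_0$'' skips a real verification: $\mc{P}_\vartheta$ is generated by $T(\mc{O})^{\tau,\circ}$ together with the root subgroups, and the image of $T'(\mc{O})^{\tau,\circ}$ in $T(\mc{O})^{\tau,\circ}$ can be a proper subgroup, so identifying the quotient's $\bar{\mc{O}}$-points with $\mc{P}_\vartheta$ is not automatic. Note that the paper's separate generation theorem does impose $p\nmid|\pi_1(G)|$ precisely because such issues are real, whereas the present theorem does not.

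The paper takes a different and safer route for general $G$: it does not descend from $G'$ at all, but reruns the argument of \cite[Theorem 4.1.2]{DH} directly for $G$; the only new point it isolates is the identification $\ms{G}_\sigma(\bar{\mc{O}})=G(\mc{O}_t)^{\sigma,\circ}$, which holds because $\ms{G}_\sigma$ is obtained from $\mr{Res}_{\mc{O}_t/\bar{\mc{O}}}(G_{\mc{O}_t})^{\sigma}$ by deleting the non-neutral components of the special fibre $G(\rk[t]/(t^m))^\sigma$, so an $\bar{\mc{O}}$-point lies in $\ms{G}_\sigma$ if and only if its reduction lands in $G^{\sigma,\circ}$. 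This neutral-component bookkeeping is also absent from your write-up: you take $G(\mc{O}_t)^{\sigma,\circ}$ as the target without checking that it equals $\ms{G}_\sigma(\bar{\mc{O}})$. If you drop the isogeny descent and instead run your Step 2 directly for general $G$ --- using that $G^{\sigma,\circ}$ is generated by $T^{\tau,\circ}$ and the relevant root subgroups (Proposition \ref{eq_levi}, which is where $p\nmid m$ enters via Steinberg's theorem) --- your argument aligns with the paper's.
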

\begin{proof}
When $G$ is simply-connected, the theorem is proved in \cite[Theorem 4.1.2]{DH}. For general simple algebraic group $G$,  we can adapt the same argument, except that we need to check  $\ms{G}_\sigma( \bar{\mc{O}})= G(\mc{O}_t)^{\sigma, \circ}$.  Since $\ms{G}_\sigma$ is the open subscheme of $\mr{Res}_{\mc{O}_t/\bar{\mc{O}}}(G_{\mc{O}_t})^{\sigma}$
 by throwing away the non-neutral components of the special fiber $$\mr{Res}_{\mc{O}_t/\bar{\mc{O}}}(G_{\mc{O}_t})^{\sigma}|_{0}=G(\rk[t]/(t^m))^\sigma,$$
 $\ms{G}_\sigma(\bar{\mc{O}})$ consists of the points $x\in G(\rk[[t]])^\sigma$ such that its image $\bar{x}_m \in G(\rk[t]/(t^m))^\sigma$ is contained in the neutral component $G(\rk[t]/(t^m))^{\sigma, \circ}$, equivalently the image $\bar{x}$ of $x$ in $G^\sigma$ is contained in $G^{\sigma,\circ}$.  Thus, $\ms{G}_\sigma( \bar{\mc{O}})= G(\mc{O}_t)^{\sigma, \circ}$.
\end{proof}

\begin{remark}
The parahoric subgroup only depends on the minimal facet of the alcove containing $\vartheta$. In fact, under the condition that $p\neq r$ and that $p$ satisfies a very mild condition, we can always find a rational interior point $\vartheta$ of this facet such that $p\nmid m$, including the case when the facet is a vertex, see Lemma \ref{prop_goodprime}.
\end{remark}
Replacing $z$ by $z^{-1}$ and let $v_\infty$ be the valuation on $k((z^{-1}))$ with $v_\infty(z^{-1})=1$, we can similarly define a parahoric group scheme $\ms{G}_{-\vartheta,\infty}$ over $\bar{\mc{O}}_\infty$ associated with $-\vartheta$, where $\bar{\mc{O}}_\infty:=\rk[[z^{-r}]]$. Set $\mc{O}_{t,\infty}:=\rk[[t^{-1}]]$ and define a parahoric group scheme $\ms{G}_{\sigma,\infty}:= \mr{Res}_{\mc{O}_{t,\infty}/\bar{\mc{O}}_\infty}(G_{\mc{O}_{t,\infty}})^{\sigma,\circ}$. Again, we have an isomorphism parahoric group schemes
\begin{equation}\label{eq_phi2} 
\phi_\infty:\ms{G}_{-\vartheta,\infty}\simeq \ms{G}_{\sigma,\infty}.
\end{equation} 
such that over its $\bar{\mc{O}}_\infty$-points, $\phi_\infty:\ms{G}_{-\vartheta,\infty}(\bar{\mc{O}}_\infty)\simeq G[[t^{-1}]]^{\sigma,\circ}$ is the restriction of the isomorphism  
\begin{equation}\label{eq_adztheta}
\mr{Ad}_{(t^{-1})^{-h}}=\mr{Ad}_{t^{h}}:G((z^{-1}))^\tau\simeq G((t^{-1}))^{\sigma}.
\end{equation}

\subsection{Canonical Levi subgroups of Parahoric groups}\label{section_flag}

Let $\textsf{A}$ be the fundamental alcove in $X_*(T)^\tau_\mb{R}$ defined by inequalities $\langle x, \alpha_i \rangle \geq 0$ for each $i\in I_\tau$, and $\langle x, \theta_0 \rangle \leq 1$. For convenience, set $\check{\omega}_o=0$. Then, $\textsf{A}$ is a simplex with vertices $\{\frac{\check{\omega}_i }{a_i} \mid i\in \hat{I}_\tau\}$. In this section, we fix a nonempty subset $Y\subseteq \hat{I}_\tau$ and a facet $F_Y$ of $\textsf{A}$ spanned by the set of vertices $\{\frac{\check{\omega}_i }{a_i} \mid i\in Y \}$.  We associate a positive integer 
\begin{equation}
\label{index_aY}
a_Y:=r\cdot\gcd\{a_i\st i\in Y\}.\end{equation}

\begin{lem}\label{prop_goodprime}
For any $p\nmid a_Y$, there is a rational interior point $\vartheta\in F_Y$ such that $p\nmid m$, where $m$ is the minimal positive multiple of $r$ such that $\frac{m}{r}\vartheta\in X_*(T_{\mr{ad}})^\tau$.
\end{lem}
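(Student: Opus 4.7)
The plan is to construct a specific rational interior point of $F_Y$ by choosing a convex combination of the vertices whose $p$-adic valuations are carefully controlled.

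First I would parametrize a candidate interior point as $\vartheta = \sum_{i\in Y} c_i \check{\omega}_i/a_i$ with positive rationals $c_i$ satisfying $\sum_{i\in Y} c_i = 1$, and then unpack the arithmetic condition. Using the convention $\check{\omega}_o = 0$ together with the identification $X_*(T_{\mr{ad}})^\tau = \bigoplus_{i\in I_\tau}\mb{Z}\check{\omega}_i$, the integer $m/r$ is exactly the least common multiple of the denominators of $c_i/a_i$ for $i\in Y\cap I_\tau$. Since $p\nmid a_Y$ already forces $p\nmid r$, the desired conclusion $p\nmid m$ is equivalent to requiring $v_p(c_i) \geq v_p(a_i)$ for every $i\in Y\cap I_\tau$, where $v_p$ denotes the $p$-adic valuation on $\mathbb{Q}^\times$.

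Second, I would invoke the hypothesis $p\nmid \gcd\{a_i : i\in Y\}$ to pick some $i_0 \in Y$ with $v_p(a_{i_0}) = 0$; when $o\in Y$ one may simply take $i_0 = o$, since $a_o = 1$. I would then concentrate the convex combination near $i_0$ by setting $c_i := p^{v_p(a_i)}/N$ for $i\in Y\setminus\{i_0\}$ and $c_{i_0} := 1 - \sum_{i\neq i_0} c_i$, for any integer $N$ coprime to $p$ that satisfies $N > \sum_{i\neq i_0} p^{v_p(a_i)}$. A direct check shows that $c_{i_0} > 0$ and that each $c_i/a_i$ has denominator coprime to $p$ (indeed $c_i/a_i = 1/(N\cdot a_i/p^{v_p(a_i)})$ for $i\neq i_0$, while $c_{i_0}/a_{i_0}$ has denominator dividing $N\cdot a_{i_0}$ with both factors coprime to $p$). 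Hence $m/r$ and thus $m$ are prime to $p$, while $\vartheta$ is manifestly a rational interior point of $F_Y$.

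Main obstacle: there is no substantive obstacle here; the lemma is essentially a $p$-adic linear-algebra exercise once the definition of $m$ is unpacked. The only bookkeeping subtlety is the treatment of the affine node: since $\check{\omega}_o = 0$, the index $o$ (when it lies in $Y$) imposes no $p$-integrality condition on the corresponding coefficient, yet it still consumes part of the convex combination. This is handled cleanly by the observation $a_o = 1$, which lets one take $i_0 = o$ whenever $o\in Y$.
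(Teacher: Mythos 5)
Your proof is correct. The reduction you set up --- writing $\vartheta=\sum_{i\in Y}c_i\,\check{\omega}_i/a_i$ in barycentric coordinates, observing that $m/r$ is the lcm of the denominators of the $c_i/a_i$ for $i\in Y\cap I_\tau$, and hence that $p\nmid m$ is equivalent to $v_p(c_i)\ge v_p(a_i)$ for all such $i$ (together with $p\nmid r$, which follows from $p\nmid a_Y$) --- is sound, and your explicit choice $c_i=p^{v_p(a_i)}/N$ for $i\ne i_0$ with $p\nmid N$, $N$ large, and $c_{i_0}=1-\sum_{i\ne i_0}c_i$ does satisfy all the required conditions, including positivity and the valuation bound at $i_0$ (where $v_p(a_{i_0})=0$). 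The paper takes a different, less explicit route: it tests only the finitely many candidates $x_Y=\frac{1}{n_Y}\sum_{i\in Y}\check{\omega}_i$ and $x_k=\frac{1}{a_k+n_Y}\big(\check{\omega}_k+\sum_{i\in Y}\check{\omega}_i\big)$ with $n_Y=\sum_{i\in Y}a_i$, uses the divisibilities $\frac{m_Y}{r}\mid n_Y$ and $\frac{m_k}{r}\mid n_k$, and argues by contradiction that if $p$ divided all of these then $p\mid a_k$ for every $k\in Y$, contradicting $p\nmid\gcd\{a_i\,|\,i\in Y\}$. Your version buys a cleaner conceptual statement (a complete $p$-adic characterization of which interior points work, from which a valid one is manufactured directly), at the cost of slightly more bookkeeping with valuations; the paper's version avoids any mention of $p$-adic valuations but only shows nonconstructively that one of $|Y|+1$ specific points succeeds. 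Both correctly isolate the same essential input, namely that $p\nmid a_Y$ forces $p\nmid r$ and supplies some $i_0\in Y$ with $p\nmid a_{i_0}$.
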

\begin{proof}
Set $n_Y:=\sum_{i\in Y}a_i$ and $x_Y:=\frac{1}{n_Y}\sum_{i\in Y} \check{\omega}_i$. For each $k\in Y$, set $n_k:=a_k+n_Y$ and $x_k:=\frac{1}{n_k}\big(\check{\omega}_k+\sum_{i\in Y} \check{\omega}_i\big)$. Then, $x_Y$ and $x_k$ are rational interior points of $F_Y$ for any $k\in Y$. We will show that some point among $\{x_Y, x_k\st k\in Y\}$ satisfies the property in the lemma. 

Let $m_Y$ and $m_k$ be minimal positive multiple of $r$ such that $\frac{m_Y}{r}x_Y\in X_*(T_{\mr{ad}})^\tau$ and $\frac{m_k}{r}x_k\in X_*(T_{\mr{ad}})^\tau$ respectively. Clearly, $\frac{m_Y}{r}\mid n_Y$ and $\frac{m_k}{r}\mid n_k$. We claim $p\nmid \frac{m_Y}{r}$ or $p\nmid \frac{m_k}{r}$ for some $k\in Y$. Suppose that $p\mid \frac{m_Y}{r}$ and $p\mid \frac{m_k}{r}$ for all $k\in Y$. Then, $p\mid n_Y$ and $p\mid n_k$ for all $k\in Y$. Hence, $p\mid a_k$ for all $k\in Y$. This contradicts with the assumption that $p\nmid a_Y$. Therefore, $p\nmid \frac{m_Y}{r}$ or $p\nmid \frac{m_k}{r}$ for some $k\in Y$. Note that the assumption $p\nmid a_Y$ implies that $p\nmid r$. Since $p$ is a prime, it follows that $p\nmid m_Y$ or $p\nmid m_k$ for some $k\in Y$.
\end{proof}
\begin{remark} From the tables in \cite[p.54-55]{Kac90}, we notice that the only primes that may not divide $a_Y$ are $2,3,5$. It is also interesting to notice that $p$ is a good prime for $G$ in the sense of \cite[p.28]{Ca} if and only if $p \nmid a_Y$ for any $Y\subset \hat{I}$ when $\tau$ is trivial.
\end{remark}
From now on, we always assume $p\nmid a_Y$. Moreover, we fix a rational interior point $\vartheta\in F_Y$ with prescribed property as in Lemma \ref{prop_goodprime}).
\begin{definition}\label{def_Py}  
Set $\mc{P}_Y:=\mc{P}_{\vartheta}$ and $\ms{G}_Y:=\ms{G}_{\vartheta}$. In fact, $\mc{P}_Y$ is the parahoric subgroup corresponding to the facet $F_Y$. We call it the parahoric subgroup associated to $Y$. When $Y=\hat{I}_\tau$, $\mc{P}_Y$ is the Iwahori subgroup, denoted by $\mc{I}$; when $Y=\{o\}$, $\mc{P}_Y=G(\mc{O})^{\tau,\circ}$. Similarly, we set $\mc{P}_Y^-:=\mc{P}^-_{\vartheta}$. When $Y=\hat{I}_\tau$, we denote $\mc{P}_Y^-$ by $\mc{I}^-$. 
\end{definition}

Set $h:=\frac{m}{r}\vartheta\in X_*(T_{\mr{ad}})^\tau$. We can write $h=\sum_{i\in I_\tau}s_i\check{\omega}_i$ for non-negative integers $s_i, i\in I_\tau$. Since $\langle \vartheta,\theta_0\rangle\leq 1$, we  have $\sum_{i\in I_\tau} a_is_i\leq \frac{m}{r}$. Set 
\[\textstyle s_o:= \frac{m}{r}-\sum_{i\in I_\tau} a_is_i\in \mb{Z}^{\geq 0}.\]
Since $\vartheta$ is an interior point of $F_Y$, we must have $Y=\{i\in \hat{I}_\tau\st s_i\neq 0\}$. Define an automorphism $\sigma:=\tau\circ \mr{Ad}_{(\epsilon_m)^{m\vartheta/r}}$ of $G$, whose order is $m$, see Lemma \ref{Order_lemma}.  By Theorem \ref{paraohoric_thm}, we have an isomorphism of parahoric group schemes
\begin{equation}\label{eq_phi3} 
\phi:\ms{G}_Y\simeq \ms{G}_{\sigma}.
\end{equation}
such that over its $\bar{\mc{O}}$-points, $\phi:\mc{P}_Y\simeq G(\mc{O}_t)^{\sigma,\circ}$ is the restriction of the isomorphism $\mr{Ad}_{t^{h}}:G(\mc{K})^\tau\simeq G(\mc{K}_t)^{\sigma}$. We define $M_Y=\mr{Ad}_{t^{-h}}(G^{\sigma,\circ})$ as a subgroup of $\mc{P}_Y$. Clearly, $M_Y$ is a Levi subgroup of $\mc{P}_Y$. 

We describe a pinning of $G^{\tau,\circ}$ as follows, cf.\,\cite[Lemma 2.2]{HS}.  Let $\{\tilde{\alpha}_j\st j\in I\}\subseteq R$ denote the set of simple roots of $G$ with respect to the pair $(B,T)$. We fix a pinning $\{x_{\pm\tilde{\alpha}_j}:\mb{G}_{\rm a}\to G\st j\in I\}$ of $G$ which is preserved by $\tau$, i.e.\,$\tau(x_{\pm\tilde{\alpha}_j}(a))=x_{\pm\tilde{\alpha}_{\tau(j)}}(a)$ for any $a\in \rk$. For each $i\in I_\tau$, let $\eta_i$ denote the preimage of $i$ via the orbit map $I\to I_\tau$. 

 If the vertices in $\eta_i$ are not adjacent, set
\begin{equation}\label{eq_pinning1}
x_{\alpha_i}(a)=\prod_{j\in \eta_i} x_{\tilde{\alpha}_j}(a), \quad x_{-\alpha_i}(a)=\prod_{j\in \eta_i} x_{-\tilde{\alpha}_j}(a),\quad \forall a\in \rk.
\end{equation} 
If the vertices in $\eta_i$ are adjacent, we can write $\eta_i=\{j, \tau(j)\}$. Set
\begin{equation}\label{eq_pinning2}
x_{\alpha_i}(a)=x_{\tilde{\alpha}_j}(a)x_{\tilde{\alpha}_{\tau(j)}}(2a)x_{\tilde{\alpha}_j}(a),\quad x_{-\alpha_i}(a)=x_{-\tilde{\alpha}_j}(\frac{a}{2})x_{-\tilde{\alpha}_{\tau(j)}}(a)x_{-\tilde{\alpha}_j}(\frac{a}{2}), \quad \forall a\in \rk.
\end{equation}

Let $\theta^0$ be the root of $G$ described in \cite[Section 8.3]{Kac90}. Then, 
\begin{equation}
\label{eq_theta}
\theta^0|_{T^{\tau,\circ}}=\theta_0. 
\end{equation}
When $(G,r)=(A_{2\ell}, 2)$, from \cite[Section 8.3]{Kac90} $\theta^{0}$ is the highest root of $G$, we choose any root subgroups $x_{\pm \theta^0}$ associated to $\pm \theta^0$. When $r>1$ and $(G,r)\not =(A_{2\ell}, 2)$, $\tau(\theta^0)\not= \theta^0$. In this case, we can choose root subgroups $\{ x_{\pm \tau^k(\theta^0)} \mid k=0,\cdots, r-1  \}$ so that they are preserved by $\tau$. 

For any $u\in \mc{K}$, we set  
$$x_{\pm\theta_0}(u):=
\begin{cases}
x_{\pm\theta_0}(u) & \text{when } r=1\\
x_{\pm\theta^0}(u)& \text{when } (G,r)= (A_{2\ell},2)\\
\prod_{k=0}^{r-1}x_{\pm\tau^k(\theta^0)}(\tau^k(u))& \text{when } r\neq 1 \text{ and } (G,r)\neq (A_{2\ell},2)
\end{cases}.$$
For any $a\in \rk$, we define $$x_{\pm\alpha_o}(a):=x_{\mp\theta_0}(az^{\pm1})\in G(\mc{K})^\tau.$$
Set $\breve{Y}=\hat{I}_\tau\setminus Y$.
 
\begin{prop}\label{eq_levi}
With the same setting as above, we have
\begin{enumerate}
\item The group $G^{\sigma, \circ}$ is generated by 
root subgroups $\{ x_{\pm \alpha} \mid \alpha \in \Pi_{\breve{Y}}  \}$ and $T^{\tau, \circ}$, where $\Pi_{\breve{Y}}$ is the following set of roots
$$\Pi_{\breve{Y}}=\begin{cases}  
\{\, -\theta_0,\alpha_i\st i\in \breve{Y} \setminus \{o\} \,\},  &  \text{if } o \in  \breve{Y}; \\
\{\,\alpha_i\st i \in \breve{Y}\}, & \text{if } o\not\in \breve{Y}. \end{cases}$$

\item The group $M_Y$ is generated by $\{x_{\pm\alpha_i}(a)\st i\in \breve{Y}, a\in \rk\}$ and $T^{\tau,\circ}$. 
\end{enumerate}
\end{prop}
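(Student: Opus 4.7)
The plan is to deduce (1) from the classification of finite-order automorphisms of $G$, and to obtain (2) by conjugating the resulting generators through $\mr{Ad}_{t^{-h}}$, which implements the isomorphism $M_Y \simeq G^{\sigma,\circ}$ underlying Theorem \ref{paraohoric_thm}.

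For (1), first observe that $\sigma$ acts as the identity on $T^{\tau,\circ}$: the inner factor $\mr{Ad}_{(\epsilon_m)^{m\vartheta/r}}$ is trivial on all of $T$ and $\tau$ acts trivially on $T^{\tau,\circ}$, so $T^{\tau,\circ}$ is a maximal torus of $G^{\sigma,\circ}$. For each $i \in I_\tau$, the pinning elements from \eqref{eq_pinning1}--\eqref{eq_pinning2}, combined with $\frac{m}{r}\vartheta = \sum_{j \in I_\tau} s_j \check{\omega}_j$, give
\[
\sigma(x_{\alpha_i}(a)) = x_{\alpha_i}((\epsilon_m)^{s_i} a),
\]
so $\alpha_i$ contributes a root subgroup of $G^{\sigma,\circ}$ precisely when $s_i = 0$, equivalently $i \notin Y$. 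A parallel analysis applied to the $\tau$-orbit of $\pm \theta^0$, using the identity $\theta^0|_{T^{\tau,\circ}} = \theta_0$ from \eqref{eq_theta}, produces a one-dimensional $\sigma$-fixed subgroup of $G$ of $T^{\tau,\circ}$-weight $\mp\theta_0$ exactly when $s_o = 0$, i.e., $o \notin Y$. By Kac's classification of finite-order automorphisms of simple Lie algebras \cite[Chapter 8]{Kac90}, these are precisely the simple roots of $G^{\sigma,\circ}$; since a connected reductive group is generated by a maximal torus and the root subgroups attached to its simple roots, (1) follows.

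For (2), $M_Y = \mr{Ad}_{t^{-h}}(G^{\sigma,\circ})$ with $h = \sum_{j\in I_\tau} s_j \check{\omega}_j \in X_*(T_{\mr{ad}})^\tau$, and $\mr{Ad}_{t^{-h}}$ centralizes $T^{\tau,\circ}$. For $i \in I_\tau \setminus Y$, the pairing $\langle h, \alpha_i\rangle = s_i = 0$ shows that $\mr{Ad}_{t^{-h}}$ fixes $x_{\pm\alpha_i}(a)$ pointwise. For the root $-\theta_0$ (when $o \notin Y$), one computes $\langle h, \theta_0\rangle = \sum_{i\in I_\tau} a_i s_i = \frac{m}{r} - s_o = \frac{m}{r}$, and, using $t^{m/r} = z$, applying $\mr{Ad}_{t^{-h}}$ to the $\sigma$-fixed root vector identified in (1) produces $x_{-\theta_0}(az) = x_{\alpha_o}(a)$ in the notation preceding the proposition; a symmetric computation yields $x_{\theta_0}(az^{-1}) = x_{-\alpha_o}(a)$ on the opposite side. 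Combined with $T^{\tau,\circ}$, these exhibit the claimed generating set for $M_Y$.

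The main obstacle is the twisted setting $r > 1$, where the $\sigma$-invariant root subgroup of $G^{\sigma,\circ}$ for $-\theta_0$ is not literally $\{x_{-\theta_0}(a) : a \in \rk\}$ (the $\tau$-invariant product defined in the text) but differs from it by a multiplicative character involving powers of $\epsilon$ arising from the cyclic decomposition $\mb{Z}/m \supseteq \mb{Z}/r$. Verifying that $\mr{Ad}_{t^{-h}}$ exactly absorbs this discrepancy, so that the pinning element $x_{\alpha_o}(a) = x_{-\theta_0}(az)$ matches the correct $\sigma$-eigenvector, will require a case-by-case check distinguishing $r = 1$, $(G,r) = (A_{2\ell}, 2)$, and the remaining twisted types, using the explicit form of $\theta^0$ from \cite[Section 8.3]{Kac90} in each case.
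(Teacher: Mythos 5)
Your argument follows the same route as the paper's, so let me just record where the justifications differ and address the issue you flag. For part (1) the paper does not compute the $\sigma$-action on the pinning directly; it first invokes Steinberg to see that $G^{\sigma,\circ}$ is connected reductive and Borel to identify $\mr{Lie}(G^{\sigma,\circ})=\mf{g}^\sigma$, then cites \cite[Section 2]{HK} (in substance the same Kac classification you appeal to) for the fact that $\Pi_\sigma$ is a simple system for the root system of $\mf{g}^\sigma$ in characteristic $0$, observes that this root system is unchanged whenever $p\nmid m$, and concludes by Springer's generation theorem. Your direct computation is compatible with this, but to know that $\Pi_\sigma$ \emph{exhausts} a simple system (rather than merely consisting of some roots of $G^{\sigma,\circ}$) you still need the reductivity of $G^{\sigma,\circ}$ and the identification of its Lie algebra with $\mf{g}^\sigma$, and the argument should be run under the standing hypothesis $p\nmid m$, not only in characteristic $0$. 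Part (2) of your proposal is identical to the paper's proof. The ``obstacle'' you flag at the end is a genuine subtlety --- for $r>1$ and $s_o=0$ the element $\prod_k x_{-\tau^k(\theta^0)}(a)$ with constant $a$ is indeed not $\sigma$-fixed --- but it resolves exactly as you predict and needs no lengthy case analysis: the $\sigma$-fixed line in the $(-\theta_0)$-weight space is parametrized by $\prod_k x_{-\tau^k(\theta^0)}(\epsilon^{-k}a)$, and since $\tau^k(az)=\epsilon^{-k}az$, applying $\mr{Ad}_{t^{-h}}$ (which multiplies the argument of each factor by $t^{m/r}=z$) yields $\prod_k x_{-\tau^k(\theta^0)}(\tau^k(az))=x_{-\theta_0}(az)=x_{\alpha_o}(a)$ on the nose; in the remaining case $(G,r)=(A_{2\ell},2)$ the weight space is one-dimensional and the sign $\tau(e_{\theta^0})=-e_{\theta^0}$ cancels against $(\epsilon_m)^{m/2}=-1$. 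Note that the paper's own proof writes this step simply as $\mr{Ad}_{t^{-h}}(x_{\pm\theta_0}(a))=x_{\pm\theta_0}(at^{\mp m/r})$, leaving the same normalization implicit, so your extra care here is a genuine (if small) improvement rather than a defect.
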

\begin{proof}
By Lemma \ref{Order_lemma} and the assumption that $p\not \mid m$,  $p$ does not divide the order of $\sigma$. In view of \cite[Theorem 8.1]{Steinberg68}, $G^{\sigma,\circ}$ is a connected reductive algebraic group. Moreover, by \cite[9.1]{Borel91}, 
${\rm Lie} (G^{\sigma,\circ})= \mathfrak{g}^\sigma$.
When ${\rm char } ( \rk)=0$, the root system of $\mathfrak{g}^\sigma$ is generated by  $\Pi_\sigma$ as a set 
of simple roots,  cf.\,\cite[Section 2]{HK}. Clearly,  when $p\not \mid m$, $\mathfrak{g}^\sigma$ has the same root system as in the case of characteristic zero (with respect to the action of $T^{\tau,\circ}$). Thus, the root system of $\mathfrak{g}^\sigma$ (also $G^{\sigma,\circ}$) is generated by $\Pi_{\breve{Y}}$ as a set of simple roots.
By \cite[Corollary 8.2.10]{Springer98}, 
$G^{\sigma,\circ }$ is generated by the root subgroups $\{ x_{\pm \alpha} \mid \alpha \in \Pi_{\breve{Y}}  \}$ and $T^{\tau, \circ}$.  This proves part (1).
 
Recall that $M_Y=\mr{Ad}_{t^{-h}}(G^{\sigma,\circ})$. For any $i\in \breve{Y} \setminus \{o\}$, by definition $\langle h, \alpha_i \rangle=0$. Then, 
$$  \mr{Ad}_{t^{-h}}(x_{\pm\alpha_i}(a))=x_{\pm\alpha_i}(a), \quad \text{for any } a\in \rk. $$
If $o\in \breve{Y}$, then $s_o=0$, equivalently $\langle h, \theta_0  \rangle=\sum_{i\in I_\tau} a_i s_i=m/r$. Then, 
$$\mr{Ad}_{t^{-h}}(x_{\pm\theta_0}(a))=x_{\pm \theta_0}(at^{\langle -h,\pm\theta_0\rangle})=x_{\pm \theta_0}(at^{\mp m/r}   )   =  x_{\mp\alpha_o}(a).$$ 
Now, part $(2)$ follows from part $(1)$. 
\end{proof}

Note that $h\in X_*(T_{\rm ad})^\tau$ can be viewed as a cocharacter $h: \mb{G}_m\to T_{\rm ad}^{\tau}$. 
We define $G_\vartheta$ to be the centralizer of $h$ in $G^{\tau,\circ}$, i.e. 
\begin{equation}\label{eq_Glambda}
G_\vartheta=\{g\in G^{\tau, \circ} \mid  {\rm Ad}_{h(s)} (g)=g , \text{ for any } s\in \mb{G}_m \}.
\end{equation}
Here, we use the notation $G_\vartheta$, as this group only depends on $\vartheta$. In fact, it only depends on $Y$ by the following corollary.
\begin{cor}\label{cor_glambdamy}
Set $Y'=Y\cup \{o\}$. We have $G_\vartheta=M_{Y'}$. In particular, if $o\in Y$, then $G_\vartheta=M_{Y}$; if $o\not \in Y$, then $G_\vartheta\subsetneqq M_{Y}$.
\end{cor}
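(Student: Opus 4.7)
The plan is to compare the explicit generators of $M_{Y'}$ provided by Proposition~\ref{eq_levi}(2) with a standard root-theoretic description of the Levi $G_\vartheta \subseteq G^{\tau,\circ}$, and establish the two inclusions separately. By Proposition~\ref{eq_levi}(2), $M_{Y'}$ is the subgroup generated by $T^{\tau,\circ}$ together with $\{x_{\pm\alpha_i}(a) \st i \in \hat{I}_\tau \setminus Y',\ a \in \rk\}$, and since $o \in Y'$ the index $i$ effectively runs over $I_\tau \setminus Y$.

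For the inclusion $M_{Y'} \subseteq G_\vartheta$, I would observe that $h = \sum_{i\in I_\tau} s_i \check{\omega}_i$ satisfies $\langle h, \alpha_i \rangle = s_i$, and by the definition of $Y$ one has $s_i = 0$ exactly for $i \in I_\tau \setminus Y$. The standard identity $\mr{Ad}_{h(s)}(x_{\pm\alpha_i}(a)) = x_{\pm\alpha_i}(s^{\pm s_i}a)$ then shows that each generator $x_{\pm\alpha_i}$ is centralized by $h(\mb{G}_{\mr{m}})$, and $T^{\tau,\circ}$ is centralized as well. Hence $M_{Y'} \subseteq G_\vartheta$.

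For the reverse inclusion $G_\vartheta \subseteq M_{Y'}$, I would invoke the general structure theorem for centralizers of cocharacters: since $p \nmid m$, the argument in the proof of Proposition~\ref{eq_levi}(1) using \cite[Theorem 8.1]{Steinberg68} already shows that $G^{\tau,\circ}$ is connected reductive, and therefore the centralizer $G_\vartheta$ of the cocharacter $h$ is a connected reductive Levi subgroup of $G^{\tau,\circ}$ generated by $T^{\tau,\circ}$ together with the root subgroups $U_\beta$ for those roots $\beta$ of $(G^{\tau,\circ}, T^{\tau,\circ})$ satisfying $\langle h, \beta \rangle = 0$. Writing such a root as $\beta = \pm \sum_{i\in I_\tau} n_i\alpha_i$ with $n_i \in \mb{Z}_{\geq 0}$, the vanishing $\langle h, \beta \rangle = \pm\sum n_i s_i = 0$ combined with $s_i \geq 0$ forces $n_i = 0$ whenever $s_i > 0$, i.e.\ for every $i \in Y \cap I_\tau$. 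Thus $U_\beta$ lies in the subgroup generated by $\{x_{\pm\alpha_i}\}_{i\in I_\tau \setminus Y} \subseteq M_{Y'}$, completing the second inclusion.

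The in-particular statements are then immediate. When $o \in Y$ one has $Y' = Y$, yielding $G_\vartheta = M_Y$ tautologically. When $o \notin Y$, Proposition~\ref{eq_levi}(2) shows that $M_Y$ contains the additional generator $x_{\alpha_o}(a) = x_{-\theta_0}(az)$ for $a \in \rk^\times$, which has strictly positive $z$-valuation and therefore cannot lie in the constant subgroup $G^{\tau,\circ} \supseteq G_\vartheta$, giving $G_\vartheta \subsetneqq M_Y$. The main point requiring care is the sign-constancy step in the non-reduced case $(G,r) = (A_{2\ell}, 2)$, where $R(G^{\tau,\circ}, T^{\tau,\circ})$ is a proper subsystem of $\mc{R}$; this is unproblematic since $\{\alpha_i\}_{i\in I_\tau}$ remains a base of $R(G^{\tau,\circ}, T^{\tau,\circ})$, so every root is still a $\mb{Z}_{\geq 0}$- or $\mb{Z}_{\leq 0}$-combination of the $\alpha_i$ and the argument above applies verbatim.
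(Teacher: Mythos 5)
Your argument is correct, but for the harder inclusion $G_\vartheta\subseteq M_{Y'}$ it takes a genuinely different route from the paper. The paper first shows $G_\vartheta\subseteq M_Y$ by observing that $G_\vartheta$ is fixed by $\mr{Ad}_{t^{\pm h}}$ and lands in $G^{\sigma,\circ}$; when $o\in Y$ this already closes the argument against the generator description of $M_Y$, and when $o\notin Y$ it reduces to that case by replacing $\vartheta$ with $\vartheta'=\frac{1}{q}h$ for a suitable large prime $q$ --- a rational interior point of $F_{Y'}$ with the same centralizer --- so that the first case applies verbatim. You instead identify $G_\vartheta$ root-theoretically as the standard Levi of $G^{\tau,\circ}$ generated by $T^{\tau,\circ}$ and the root subgroups killed by $h$, and use positivity of the coefficients of a root in the simple roots to see that such roots are supported on $I_\tau\setminus Y$. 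Your route is more direct and handles both cases uniformly, at the price of importing the centralizer-of-cocharacter structure theorem, which the paper's proof avoids. Two points deserve to be made explicit. First, $h$ lies in $X_*(T_{\mr{ad}})^\tau$ rather than in $X_*(T^{\tau,\circ})=X_*(T)^\tau$, so before citing the Levi structure theorem you should replace $h$ by a positive multiple lying in $X_*(T)^\tau$ (this does not change $G_\vartheta$, since only the image $h(\mb{G}_\m)$ matters), or equivalently note that $G_\vartheta$ is the centralizer of a subtorus of $T^{\tau,\circ}$ isogenous onto $h(\mb{G}_\m)$. Second, invoking Proposition \ref{eq_levi}(2) for $Y'$ presupposes a rational interior point of $F_{Y'}$ with $p\nmid m$; this is automatic because $a_{Y'}=r\cdot\gcd\{a_i\st i\in Y'\}=r$ (as $a_o=1$) and $p\neq r$, but it should be said, since supplying exactly such a point is the role played by the paper's $\vartheta'$. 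Neither point is a gap, and your treatment of the strictness $G_\vartheta\subsetneqq M_Y$ when $o\notin Y$ via the non-constant element $x_{\alpha_o}(a)=x_{-\theta_0}(az)$ matches the paper's.
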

\begin{proof}
First of all, we have $G_\vartheta=\mr{Ad}_{t^h}(G_\vartheta)\subseteq G^{\sigma,\circ}$. Hence, $G_\vartheta=\mr{Ad}_{t^{-h}}(G_\vartheta)\subseteq M_Y$. 

When $o\in Y$, $M_Y$ is generated by $\{x_{\pm\alpha_i}(a)\st i\not \in Y, a\in \rk\}$ and $T^{\tau,\circ}$, which is contained in $G_\vartheta$. Therefore, $G_\vartheta=M_Y=M_{Y'}$.

When $o\notin Y$, i.e. $s_o=0$, we have $\langle \vartheta, \theta_0\rangle =1$. Let $q>\frac{m}{r}$ be a prime number so that $\frac{1}{q}h\notin X_*(T_{\mr{ad}})^\tau$ and $q\neq p$. Set $\vartheta':=\frac{1}{q}h$. We claim that $q$ is the minimal positive integer so that $q\vartheta'\in X_*(T_{\mr{ad}})^\tau$. Suppose $a\vartheta'\in X_*(T_{\mr{ad}})^\tau$ for some integer $0<a<q$. Pick integers $u,v$ so that $uq+va=1$. Then, $\vartheta'=(uq+va)\vartheta'=uh+va\vartheta'\in X_*(T_{\mr{ad}})^\tau$, which is a contradiction. Hence, $q$ is the minimal positive integer so that $q\vartheta'\in X_*(T_{\mr{ad}})^\tau$. Therefore, $m':=qr$ is the minimal positive multiple of $r$ so that $\frac{m'}{r}\vartheta'\in X_*(T_{\mr{ad}})^\tau$. Since $h':=\frac{m'}{r}\vartheta'=h$, we conclude that $G_{\vartheta'}=G_\vartheta$. Note that $\langle \vartheta', \theta_0\rangle =\frac{1}{q}\cdot\frac{m}{r}<1$. The element $\vartheta'$ is a rational interior point of $F_{Y'}$.  By the first case, we have $G_{\vartheta'}=M_{Y'}$ (the condition $p\nmid m'$ clearly holds). Therefore, $G_\vartheta=M_{Y'}$. In this case, we have $M_{Y'}\subsetneqq M_{Y}$, see Proposition \ref{eq_levi}.
\end{proof}

\section{A refinement of the coherence conjecture (after Zhu)}\label{3208530}

In this section, we first introduce necessary notation and formulate the main result of this section in Theorem \ref{thm_mainthm}. Section \ref{sec_equivariantbundle} is devoted to the discussion of equivariant line bundles on partial affine flag varieties.  In Section \ref{43287437}, we extend line bundles on partial affine flag varieties to the global affine Grassmannian $\Gr_{\mc{G}}$ of a parahoric Bruhat-Tits group scheme $\mc{G}$ over $\mathbb{A}^1$ associated to $G$
and endow them with a $L^-\mc{G}$-equivariant structure when $G$ is simply-connected, which ultimately leads to the proof of Theorem \ref{thm_mainthm}.
\subsection{Statement of theorem}\label{Section_main}
In this section, we follow the same notations and assumption as in Section \ref{Section_Parahoric}. We emphasize again that the automorphism $\tau$ on $G$ can be trivial.

Let $W$ be the Weyl group of $G$ with respect to the maximal torus $T$. Then, the subgroup $W^\tau$ of $\tau$-fixed points  is naturally isomorphic to the Weyl group of $G^{\tau,\circ}$ with respect to $T^{\tau,\circ}$. Let $X_*(T)_\tau$ be the lattice of $\tau$-coinvariants of $X_*(T)$. 

Recall the isomorphism $\nu:\mf{h}^\tau_{\mb{Q}}\to (\mf{h}^\tau_{\mb{Q}})^*$ in \eqref{eq_dualmap}. We identify $\mf{h}^\tau_{\mb{Q}}\simeq X_*(T)^\tau_{\mb{Q}}$. Then, we have the following well-defined map 
\begin{equation}\label{4529634}
\textstyle \iota:X_*(T)_\tau\to (\mf{h}^\tau_{\mb{Q}})^*, \quad \mu\mapsto \nu(\sum_{i=0}^{r-1} \tau^i(\tilde\mu)), 
\end{equation}
where $\tilde\mu$ is a lifting of $\mu$ in $X_*(T)$.  

\begin{definition}
Let $\tilde{W}_{\mr{aff}}:=X_*(T)_\tau\rtimes W^\tau$ be the extended affine Weyl group. The elements in $\tilde{W}_{\mr{aff}}$ are written as $\varrho^{\bar\mu}w$ for any $\bar{\mu}\in X_*(T)_\tau$ and $w\in W^\tau$, where $\varrho^{\bar{\mu} }$ represents the translation on the apartment of $(\ms{G}_{\bar{\mc{K}}},S)$ by $-\bar{\mu}$. 
\end{definition}
The action of $\tilde{W}_{\mr{aff}}$ on $(\hat{\mf{h}}^\tau_{\mb{Q}})^*$ is given by 
\begin{equation}\label{Weyl_action}
\varrho^{\mu}w(x)= w(x)-\langle w(x),K\rangle \iota(\mu)+\Big(\big( w(x)\st\iota(\mu)\big)-\frac{1}{2}\big(\iota(\mu)\st\iota(\mu)\big)\langle w(x),K\rangle\Big)\delta
\end{equation}
for any $\varrho^{\mu}w\in \tilde{W}_{\mr{aff}}$, $x\in (\hat{\mf{h}}^\tau_{\mb{Q}})^*$, cf.\,\cite[(6.5.2)]{Kac90}. 

Let $G'$ be the simply-connected cover of $G$ with the maximal torus $T'$ mapping to $T$. We identify $X_*(T')$ with the coroot lattice $\check{Q}$ of $G$. Let $\check{Q}_\tau$ be the lattice of $\tau$-coinvariants of $\check{Q}$. Then, $W_{\rm aff}:= \check{Q}_\tau \rtimes W^\tau$ is an affine Weyl group,  cf.\,\cite[A.1]{DH}. Let $\Omega$ denote the group of length zero elements in $\tilde{W}_{\mr{aff}}$. Then, $\Omega\simeq X_*(T)_\tau/\check{Q}_\tau$ and  $\tilde{W}_{\mr{aff}}=W_{\rm aff} \rtimes \Omega$. Moreover, the group $\Omega$ acts on $\hat{I}_\tau$. 

Let $w_0$ be the longest element in $W^\tau$. Any element in $\Omega$ can be written as 
\begin{equation}\label{529345}
\varsigma_\kappa: = \varrho^{-\kappa} w_\kappa w_0,\end{equation}
where $\kappa$ is 0 or a miniscule dominant element in $X_*(T)^+_\tau$ (cf.\,\cite[Section 2.3]{BH}), and $w_\kappa$ is the longest word in the stabilizer $W^\tau_{\kappa}$ of $\kappa$ in $W^\tau$, cf.\,\cite[Chapter 6, Section 2.3]{Bourbaki}.

For any nonempty subset $Y \subset \hat{I}_\tau$, let 
$\mc{P}_Y\subseteq G(\mc{K})^\tau$ be the parahoric subgroup associated to $Y$ as in Definition \ref{def_Py}, where $\mc{K}=\rk((z))$.
Let $\ms{G}_Y$ denote the corresponding parahoric group scheme over $\bar{\mc{O}}$. When $Y=\hat{I}_\tau$, $\mc{P}_Y$ is the Iwahori subgroup of $G(\mc{K})^\tau$, denoted by $\mc{I}$.

\begin{definition}\label{952819}
We define the partial affine flag variety $$\Fl_Y:=L\ms{G}_Y/L^+\ms{G}_Y,$$
where $L\ms{G}_Y$ (resp.\,$L^+\ms{G}_Y$) is the loop group scheme (resp.\,jet group scheme) of $\ms{G}_Y$. Then, $\Fl_Y(\rk)= G(\mc{K})^\tau/ \mc{P}_Y $. In particular, when $\tau$ is trivial and $Y=\{o\}$, $\Fl_Y$ is the affine Grassmannian of $G$, denoted by $\Gr_G$. Let $e_0$ denote the base point in $\Fl_Y$ and $\Gr_G$.
\end{definition}

Set $\breve{Y}=:\hat{I}_\tau \setminus Y $, and let $W_{\breve{Y}}$ denote the parabolic subgroup of $W_{\rm aff}$ generated by the set of simple reflections $\{ r_i\mid  i\in \breve{Y} \}$.  
\begin{definition}
For any $w\in \tilde{W}_\mr{aff}/W_{\breve{Y}}$, we define the Schubert cell $\Fl_{Y, w}:= \mc{I} \dot{w}e_0$. The affine Schubert variety $\overline{\Fl}_{Y,w}$ is the closure of $\Fl_{Y, w}$, where $e_0$ is the based point in $\Fl_Y$ and $\dot{w}$ is a lift of $w$ in $G(\mc{K})^\tau$. When $Y=\hat{I}_\tau$, we simply use the notations $\Fl_w$ and $\overline{\Fl}_w$.

For any dominant coweight $\mu\in X_*(T)^+$, we define the (spherical) affine Schubert variety $\Grb_{G,\mu}$ to be the closure of the orbit $G(\mc{O})\cdot e_\mu$ in $\Gr_G$, where $e_\mu:=z^{\mu} e_0\in \Gr_G$.
\end{definition}

Given any $\bar\mu\in X_*(T)_\tau$, we simply denote by $\Fl_{Y,\bar\mu}$ (resp.\,$\overline{\Fl}_{Y,\bar\mu}$) the Schubert cell $\Fl_{Y,\varrho^{\bar\mu}}$ (resp. Schubert variety \,$\overline{\Fl}_{Y,\varrho^{\bar\mu}}$) in $\Fl_Y$ associated to $\varrho^{\bar{\mu}}$. In fact, a lift of $\varrho^{\bar\mu}$ can be taken to be the following element in $T(\mc{K})^\tau$,
\begin{equation}\label{462918}
n^{\mu}:=\prod_{i=0}^{r-1}\tau^i(z^{\mu})\in T(\mc{K})^\tau,  
\end{equation}
where $\mu$ is a lifting of $\bar\mu$ in  $X_*(T)$. Then, $\Fl_{Y,\bar\mu}$ is exactly the orbit $\mc{I}\cdot e_{\bar{\mu}}$ in $\Fl_Y$, where $ e_{\bar{\mu}}:=n^\mu e_0$. We define the following union of Schubert varieties in $\Fl_Y$,
\begin{equation}
\label{adm_def}
\mc{A}_Y(\bar\mu):=\bigcup_{w\in W^\tau}\overline{\Fl}_{Y,w(\bar\mu)}. 
\end{equation}

We now recall the admissible set of affine Weyl group, cf.\,\cite[2.1.6]{Zhu14}. For any $\bar{\mu}\in X_*(T)_\tau$, define 
$$\mr{Adm}(\bar{\mu}):=\{w\in \tilde{W}_{\mr{aff}}\st w\preccurlyeq \varrho^{\eta} \text{ for some } \eta\in W^\tau \bar\mu\}.$$ 
Let $\mr{Adm}^Y(\bar{\mu})$ denote the subset $W_{\breve{Y}}\mr{Adm}(\bar{\mu})W_{\breve{Y}}\subset \tilde{W}_{\mr{aff}}$. For any $w\in \tilde{W}_{\mr{aff}}$, let $_Y\Fl_{Y,w}$ denote the $\mc{P}_Y$-orbit through $\dot{w}$ in $\Fl_Y$ and let $_Y\overline{\Fl}_{Y,w}$ be the closure of $_Y\Fl_{Y,w}$ in $\Fl_Y$.  In the following proposition, we show that  $\mc{A}_Y(\bar\mu)$ defined in (\ref{adm_def}) agrees with the definition in \cite[2.2.2]{Zhu14} in terms of admissible set.  
\begin{lem}\label{7842349}
For any $\bar{\mu}\in X_*(T)_\tau$,  we have 
$$\mc{A}_Y(\bar\mu)=\bigcup_{w\in \mr{Adm}^Y(\bar{\mu})}{}_Y\overline{\Fl}_{Y,w}.$$
\end{lem}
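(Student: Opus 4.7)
My plan is to prove the equality by double inclusion, rewriting both sides as unions of Iwahori Schubert cells in $\Fl_Y$ indexed by cosets in $\tilde W_{\mr{aff}}/W_{\bar Y}$ and then comparing under the Bruhat order.

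For the inclusion $\mc{A}_Y(\bar\mu)\subseteq\bigcup_{w\in \mr{Adm}^Y(\mu)}{}_Y\overline{\Fl}_{Y,w}$, each translation $\varrho^{v(\bar\mu)}$ for $v\in W^\tau$ lies trivially in $\mr{Adm}(\mu)\subseteq\mr{Adm}^Y(\mu)$ (take $\eta=v(\bar\mu)$ in the definition of $\mr{Adm}$), and since $\mc{I}\subseteq \mc{P}_Y$ one has $\Fl_{Y,v(\bar\mu)}=\mc{I}\,n^{v\mu}e_0\subseteq\mc{P}_Y\,n^{v\mu}e_0={}_Y\Fl_{Y,\varrho^{v(\bar\mu)}}$; passing to closures finishes this direction.

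For the reverse inclusion, I first observe that $_Y\overline{\Fl}_{Y,w}$ depends only on the double coset $W_{\bar Y}wW_{\bar Y}$, so since $\mr{Adm}^Y(\mu)=W_{\bar Y}\mr{Adm}(\mu)W_{\bar Y}$ it suffices to consider $w\in\mr{Adm}(\mu)$. For such $w$, admissibility $w\preccurlyeq\varrho^{v(\bar\mu)}$ in $\tilde W_{\mr{aff}}$ yields $_Y\overline{\Fl}_{Y,w}\subseteq{}_Y\overline{\Fl}_{Y,\varrho^{v(\bar\mu)}}$, so the task reduces to showing $_Y\overline{\Fl}_{Y,\varrho^{v(\bar\mu)}}\subseteq \mc{A}_Y(\bar\mu)$ for every $v\in W^\tau$. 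To this end I would decompose $\mc{P}_Y=\mc{I}W_{\bar Y}\mc{I}$, write $\mc{P}_Y\,e_{v(\bar\mu)}=\bigcup_{u\in W_{\bar Y}}\mc{I}\,\dot u\,n^{v\mu}e_0$, and analyze each Iwahori cell in this union. When $u$ belongs to the finite part $W^\tau\cap W_{\bar Y}$, the identity $u\varrho^{v(\bar\mu)}=\varrho^{uv(\bar\mu)}u$ together with $\dot u\,e_0=e_0$ identifies the cell with $\Fl_{Y,uv(\bar\mu)}\subseteq\overline{\Fl}_{Y,uv(\bar\mu)}\subseteq\mc{A}_Y(\bar\mu)$. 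When $u$ involves the affine simple reflection $r_o$ (possible only if $o\in\bar Y$), one writes $r_o=\varrho^{\check\theta_0}s_{\theta_0}$, computes the translation part of $u\varrho^{v(\bar\mu)}$ as a sum of $v(\bar\mu)$ and an element of the coroot lattice, and then verifies that the image in $\tilde W_{\mr{aff}}/W_{\bar Y}$ is Bruhat-dominated by $\varrho^{v'(\bar\mu)}W_{\bar Y}$ for some $v'\in W^\tau$. Taking closures then yields the desired inclusion.

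The main obstacle is precisely this last Bruhat-domination step. It is equivalent to the combinatorial identity that the image of $\mr{Adm}^Y(\mu)$ in $\tilde W_{\mr{aff}}/W_{\bar Y}$ coincides with the Bruhat-downset of the images of the translation elements $\{\varrho^{v(\bar\mu)}\st v\in W^\tau\}$. This is the Kottwitz--Rapoport characterization of the $\mu$-admissible set relative to a general parahoric, established in \cite[\S 2.2]{Zhu14} in the untwisted setting; the same combinatorial argument carries over to the twisted case with only notational changes.
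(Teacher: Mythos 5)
Your first inclusion and the reduction of the second one to showing ${}_Y\overline{\Fl}_{Y,\varrho^{v(\bar\mu)}}\subseteq \mc{A}_Y(\bar\mu)$ for all $v\in W^\tau$ agree with the paper. Two problems arise after that. The smaller one: the claimed decomposition $\mc{P}_Y\,e_{v(\bar\mu)}=\bigcup_{u\in W_{\bar Y}}\mc{I}\dot u\, n^{v\mu}e_0$ is not an equality. From $\mc{P}_Y=\bigsqcup_{u}\mc{I}\dot u\mc{I}$ you get $\bigcup_u \mc{I}\dot u\mc{I}\,n^{v\mu}e_0$, and the middle $\mc{I}$ must be absorbed using the BN-pair multiplication rule $\mc{I}s_i\mc{I}\cdot\mc{I}\varrho^\eta\mc{I}\subseteq \mc{I}\varrho^\eta\mc{I}\cup\mc{I}s_i\varrho^\eta\mc{I}$, which only gives an inclusion of $\mc{P}_Y\,e_{v(\bar\mu)}$ into $\bigcup_{u\in W_{\bar Y}}\mc{I}\dot u\,n^{v\mu}e_0$ (this is the step marked $(*)$ in the paper). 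That direction of inclusion is the one you need, so this is fixable, but it must be stated as such.

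The genuine gap is exactly the step you flag as the main obstacle. The case split on whether $u$ contains $r_o$, the computation of the translation part of $u\varrho^{v(\bar\mu)}$, and the appeal to a Kottwitz--Rapoport characterization in \cite[\S 2.2]{Zhu14} are not carried out, and the citation is asked to bear the entire weight of the argument without verification that it delivers the required statement. None of this is necessary. The identity you use in the finite case persists verbatim for an arbitrary $u\in W_{\bar Y}$: write $u=\varrho^\gamma\bar u$ with $\bar u\in W^\tau$ and $\gamma\in X_*(T)_\tau$; then
\begin{equation*}
u\,\varrho^{v(\bar\mu)}\,u^{-1}=\varrho^\gamma\,\bar u\,\varrho^{v(\bar\mu)}\,\bar u^{-1}\,\varrho^{-\gamma}=\varrho^{\bar u(v(\bar\mu))},
\end{equation*}
because translations commute with each other. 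Since $u\in W_{\bar Y}$, its lift $\dot u$ lies in $\mc{P}_Y$, so $\dot u\,n^{v\mu}\,e_0=\dot u\,n^{v\mu}\,\dot u^{-1}e_0$ and hence $\mc{I}\dot u\,n^{v\mu}e_0=\mc{I}\,\varrho^{\bar u(v(\bar\mu))}e_0=\Fl_{Y,\bar u(v(\bar\mu))}\subseteq\mc{A}_Y(\bar\mu)$, with $\bar u(v(\bar\mu))\in W^\tau\bar\mu$. Every Iwahori cell occurring in ${}_Y\Fl_{Y,\varrho^{v(\bar\mu)}}$ is therefore literally one of the cells $\Fl_{Y,v'(\bar\mu)}$, $v'\in W^\tau$; no Bruhat-domination estimate, no analysis of $r_o=\varrho^{\mu_0}\,r_{\theta_0}$, and no external admissible-set combinatorics are needed. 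This one-line conjugation trick is how the paper closes the argument, and without it (or a verified substitute for the cited combinatorial identity) your proof is incomplete.
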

\begin{proof}
Denote the right side by $X_Y(\bar{\mu})$. For any $w\in W^\tau$, by definition, we have $\Fl_{Y,w(\bar\mu)}\subseteq {}_Y\Fl_{Y,\varrho^{w(\bar\mu)}}$. Hence, $\overline{\Fl}_{Y,w(\bar\mu)}\subseteq {}_Y\overline{\Fl}_{Y,\varrho^{w(\bar\mu)}}\subseteq X_Y(\bar{\mu})$. This shows that $\mc{A}_Y(\bar\mu)\subseteq X_Y(\bar{\mu})$. 

Conversely, for any $w\in \mr{Adm}^Y(\bar{\mu})$, we write $w=w_1 w' w_2$, where $w_1,w_2\in W_{\breve{Y}}$ and $w'\preccurlyeq \varrho^\eta$ for some $\eta\in W^\tau\bar\mu$. Then, ${}_Y\overline{\Fl}_{Y,w}={}_Y\overline{\Fl}_{Y,w'}\subseteq {}_Y\overline{\Fl}_{Y,\varrho^\eta}$. We have  
\begin{equation}\label{eq_union}
    \mc{P}_Y\varrho^\eta\mc{P}_Y=\bigcup_{y\in W_{\breve{Y}}}\mc{I}y\mc{I}\varrho^\eta \mc{P}_Y\overset{(*)}{\subseteq}\bigcup_{y\in W_{\breve{Y}}}\mc{I}y\varrho^\eta \mc{P}_Y=\bigcup_{y\in W_{\breve{Y}}}\mc{I}y\varrho^\eta y^{-1} \mc{P}_Y.
\end{equation}
The inclusion $(*)$ follows from the fact that $\mc{I}y\mc{I}=\mc{I}s_{i_1}\mc{I}\cdots \mc{I}s_{i_k}\mc{I}$  for reduced word $y=s_{i_1}\cdots s_{i_k}$ and $\mc{I}s_i\mc{I}\mc{I}\varrho^\eta\mc{I}\subseteq \mc{I}\varrho^\eta\mc{I}\cup \mc{I}s_i\varrho^\eta\mc{I}$ for any simple reflection $s_i\in W_{\rm aff}$, cf.\,\cite[Theorem 5.1.3(d)]{Ku}. For any $y\in W_{\breve{Y}}$, we can write $y=y'\varrho^\gamma$ for some $y'\in W^\tau, \gamma\in X_*(T)_\tau$. Then, $y\varrho^\eta y^{-1}=y'\varrho^\eta {y'}^{-1}=\varrho^{y'(\eta)}$ and we have 
\[\mc{I}y\varrho^\eta y^{-1} \mc{P}_Y=\mc{I}\varrho^{y'(\eta)}\mc{P}_Y\subseteq \bigcup_{v \in W^\tau}\mc{I}\varrho^{v(\eta)}\mc{P}_Y.\]
Hence, $\mc{P}_Y\varrho^\eta\mc{P}_Y\subseteq \bigcup_{v\in W^\tau}\mc{I}\varrho^{v(\eta)}\mc{P}_Y$. Therefore, ${}_Y\overline{\Fl}_{Y,w}\subseteq {}_Y\overline{\Fl}_{Y,\varrho^\eta}\subseteq \bigcup_{v\in W^\tau}{}_Y\overline{\Fl}_{Y,\varrho^{v(\eta)}}=\mc{A}_Y(\bar\mu)$. This shows that $X_Y(\bar{\mu})\subseteq \mc{A}_Y(\bar\mu)$.
\end{proof}

For any $\varsigma_\kappa\in \Omega$, let $\Fl_Y^{\kappa}$ denote the connected component of $\Fl_Y$ containing $\varsigma_{\kappa}$ (equivalently $\varrho^{-\kappa}$). Then, $\Fl_Y^{\kappa}$ is a homogeneous space of $G'(\mc{K})^\tau$, and
\begin{equation}\label{eq_sckappa}
\Fl_Y^{\kappa}\simeq  G'(\mc{K})^\tau/ \mc{P}'_{Y_\kappa}=:\Fl_{Y_\kappa}',
\end{equation}
where $\mc{P}'_{Y_\kappa}$ is the parahoric subgroup of $G'(\mc{K})^\tau$ associated to $Y_\kappa:=\{ \varsigma_\kappa(i)  \mid  i\in Y \}$. By \cite[Section 10]{PR08}, we have the following isomorphism,
\begin{equation}\label{5324798}
\mr{Pic}(\Fl^{\kappa}_Y)\simeq \bigoplus_{i\in  Y_\kappa} \mb{Z}\Lambda_i.
\end{equation} 
Given any $\bar\mu\in X_*(T)_\tau$, let $\varsigma_\kappa\in \Omega$ be the unique element such that $\varrho^{\bar\mu}\varsigma_{\kappa}^{-1}\in W_{\mr{aff}}$, equivalently 
\begin{equation}
\label{eq_kappa}
\bar\mu+\kappa\in \check{Q}_\tau.\end{equation} 
Then, for any $w\in W^\tau$, we have $w(\bar\mu)+\kappa\in \check{Q}_\tau$, equivalently $\varrho^{w(\bar\mu)}\varsigma_{\kappa}^{-1}\in W_{\mr{aff}}$. Hence, $\Fl_{Y,w(\bar\mu)}\subseteq \Fl_Y^\kappa$ for any $w\in W^\tau$. Let $\dot{\varsigma}_\kappa$ be a lifting of $\varsigma_\kappa$ in $G(\mc{K})^\tau$. Consider the translation 
\begin{equation}\label{eq_translation}
\dot{\varsigma}_\kappa:\Fl_Y^\circ\to \Fl_Y^\kappa
\end{equation}
given by $x\mapsto \dot{\varsigma}_\kappa x$. For any $w\in W^\tau$, the orbit $\mc{I}(\dot{\varsigma}_\kappa)^{-1} \varrho^{w(\bar\mu)}e_0$ maps to the orbit $\mc{I} \varrho^{w(\bar\mu)}e_0$. Thus, we have an isomorphism $\overline{\Fl}_{Y,\varsigma_\kappa^{-1} \varrho^{w(\bar\mu)}}\simeq \overline{\Fl}_{Y,w(\bar\mu)}$ under the translation map. 

Let $\tilde{\varsigma}_\kappa$ be a lifting of $\dot{\varsigma}_\kappa$ in $G'(\mc{K})^\tau$. Consider the conjugation map $\mr{Ad}_{\tilde{\varsigma}_\kappa}:G'(\mc{K})^\tau\to G'(\mc{K})^\tau$ given by $x\mapsto \tilde{\varsigma}_\kappa x (\tilde{\varsigma}_\kappa)^{-1}$. It induces an isomorphism $\mr{Ad}_{\tilde{\varsigma}_\kappa}: \Fl_Y'\to \Fl_{Y_\kappa}'$. Then, \eqref{eq_sckappa} is the composition of the following isomorphisms 
\begin{equation}
\label{eq_FL_comp}
\Fl_Y^{\kappa}\xrightarrow{(\dot{\varsigma}_\kappa)^{-1}}  \Fl_Y^\circ \simeq \Fl_Y'\xrightarrow{\mr{Ad}_{\tilde{\varsigma}_\kappa}} \Fl_{Y_\kappa}'.\end{equation} 
Under these isomorphisms, we have 
$$\overline{\Fl}_{Y,w(\bar\mu)}\simeq \overline{\Fl}_{Y,\varsigma_\kappa^{-1} \varrho^{w(\bar\mu)}}\simeq \overline{\Fl}'_{Y,\varsigma_\kappa^{-1} \varrho^{w(\bar\mu)}}\simeq \overline{\Fl}'_{Y_\kappa, \varrho^{w(\bar\mu)}\varsigma_\kappa^{-1}}.$$
Therefore, \eqref{eq_sckappa} restricts to an isomorphism
\begin{equation}\label{523012}
\overline{\Fl}_{Y,w(\bar\mu)}\simeq \overline{\Fl}'_{Y_\kappa, \varrho^{w(\bar\mu)}\varsigma_\kappa^{-1}}.
\end{equation}
Hence, the variety $\mc{A}_Y(\bar\mu)$ maps to $\bigcup_{w\in W^\tau}\overline{\Fl}'_{Y_\kappa, \varrho^{w(\bar\mu)}\varsigma_\kappa^{-1}}$ under the map \eqref{eq_sckappa}. 

Let $\Lambda=\sum_{i\in  Y} n_i \Lambda_i$ with $n_i>0$. Let $c:=\Lambda(K)$ be the level of $\Lambda$ and let $\lambda=\sum_{i\in Y} n_i\omega_i$ be the restriction of $\Lambda$ to $(T^{\tau,\circ})'$. Let $\ms{L}(\Lambda)$ be the line bundle on $\Fl_{Y}^\circ\simeq \Fl'_Y$ associated to the weight $\Lambda$. Let $\ms{L}(\Lambda)_\kappa$ be the line bundle on $\Fl_{Y}^\kappa$ which is the push-forward of $\ms{L}(\Lambda)$ via the translation map $\dot{\varsigma}_\kappa:\Fl_{Y}^\circ\to \Fl_{Y}^\kappa$. Regarded as a line bundle on $\Fl'_{Y_\kappa}$ via the identification \eqref{eq_FL_comp}, $\ms{L}(\Lambda)_\kappa$ is the line bundle associated to the affine dominant weight $\varsigma_\kappa(\Lambda)$. 

Suppose that $p\nmid a_{Y}$, where $a_{Y}$ is defined in (\ref{index_aY}) for the subset $Y\subset \hat{I}_\tau$. 
By Lemma \ref{prop_goodprime}, we can choose an interior point $\vartheta$ in the facet associated to $Y$ so that $p\nmid m$. Let $G_\vartheta$ be the subgroup  of $G^{\tau,\circ}$ associated to $\vartheta$ defined in \eqref{eq_Glambda} of Section \ref{section_flag}. Let $G^\tau{}'$ denote the simply-connected cover of $G^{\tau,\circ}$. Let $G'_\vartheta$ (resp.\,$T^\tau{}'$) be the pre-image of $G_\vartheta$ (resp.\,$T^{\tau,\circ}$) via the simply-connected cover map $G^\tau{}' \to G^{\tau,\circ}$. 

Set $G(\mc{O})^{\tau}{}':= G(\mc{O})^{\tau, \circ}\times_{G^{\tau,\circ} }G^\tau{}'$. In Proposition \ref{thm_Gtau}, we will show that the line bundle $L$ of central charge one on $\Gr_G$ has a unique $G'(\mc{O})$-equivariant structure and the line bundle $\ms{L}(\Lambda)_\kappa$ on $\Fl^\kappa_Y$ has a unique $G(\mc{O})^{\tau}{}'$-equivariant structure. These equivariant structures induce   $G'_\vartheta$-equivariant structures on $L$ and $\ms{L}(\Lambda)_\kappa$, via the composition of natural morphisms 
$$G'_\vartheta\hookrightarrow G^\tau{}' \to G'\hookrightarrow G'(\mc{O}),$$
and the composition map $G'_\vartheta\hookrightarrow G^\tau{}'\to G(\mc{O})^{\tau}{}'$ respectively.

We now state our main result of this section, which will be proved in the rest of the section.

\begin{thm}\label{thm_mainthm}
Let $\mu\in X_*(T)^+$. Let $\Fl_Y^\kappa$ be the component containing $\overline{\Fl}_{Y,\bar{\mu}}$  (equivalently $\varrho^{\bar\mu}\varsigma_{\kappa}^{-1}\in W_{\mr{aff}}$). Let $\ms{L}(\Lambda)_\kappa$ be the line bundle  on $\Fl_Y^\kappa$ of central charge $c$ defined as above.
\begin{enumerate}
\item Assume that $p\nmid a_{Y}$ and $p \nmid |X_*(T)/\check{Q}|$,\ \ we have an isomorphism of $T^{\tau}{}'$-representations: 
\[H^0(\mc{A}_Y(\bar\mu),\ms{L}(\Lambda)_\kappa)\simeq H^0(\Grb_{G,\mu},L^c)\otimes \rk_{-\lambda},\]
where $\rk_{-\lambda}$ is the 1-dimensional representation of $T^{\tau}{}'$.
\item Assume $p=0$, we have an isomorphism of $G'_\vartheta$-representations:
\[H^0(\mc{A}_Y(\bar\mu),\ms{L}(\Lambda)_\kappa)\simeq H^0(\Grb_{G,\mu},L^c)\otimes \rk_{-\lambda}.\]
\end{enumerate}
\end{thm}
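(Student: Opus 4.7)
The plan is to realize both sides as fibers of a single coherent sheaf over $\mb{A}^1$ arising from Zhu's global picture, and then to compare equivariant structures to account for the character twist. More precisely, let $\mc{G}$ be the parahoric Bruhat-Tits group scheme over $\mb{A}^1$ ramified at $0$ whose fiber at $0$ is $\ms{G}_{Y_\kappa}$, and let $\pi \colon \overline{\Gr}_{\mc{G},\mu}\to \mb{A}^1$ be the global Schubert variety. By Zhu's theorem this map is flat, with generic fiber $\overline{\Gr}_{G,\mu}$ and special fiber $\mc{A}_Y(\bar\mu)$. The key object is the line bundle $\mc{L}$ on $\Gr_{\mc{G}}$ constructed via Faltings' method (Section \ref{sec_equivariantbundle}), which specializes to $L^c$ at a generic $\mb{A}^1$-point and to $\ms{L}(\Lambda)$ at $0$; by Theorem \ref{equivariant}, the restriction of $\mc{L}$ to $\overline{\Gr}_{\mc{G},\mu}$ carries a canonical equivariant structure for $G_\vartheta\times \mb{A}^1$ (for part (2)) and in particular for $(T^{\tau,\circ})'\times \mb{A}^1$ (for part (1)), realized as closed subgroup schemes of $L^+\mc{G}$.

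The next step is to push $\mc{L}$ forward. I would argue that $\pi_* \mc{L}$ is a locally free sheaf on $\mb{A}^1$ by combining the vanishing $R^{i}\pi_*\mc{L}=0$ for $i\geq 1$ (which underlies Zhu's dimension equality in Theorem \ref{thm 1.1}, obtained via Frobenius splitting and standard base change) with cohomology and base change. The equivariant structure on $\mc{L}$ then equips $\pi_*\mc{L}$ with a $G_\vartheta$-equivariant (resp.\ $(T^{\tau,\circ})'$-equivariant) structure, where the group acts trivially on $\mb{A}^1$. Since $G_\vartheta$ (resp.\ $(T^{\tau,\circ})'$) is connected and $\mb{A}^1$ is connected, every closed fiber of $\pi_*\mc{L}$ is isomorphic as a representation; in particular the fibers at $0$ and at a generic point produce an isomorphism
\[
H^0\bigl(\mc{A}_Y(\bar\mu),\,\ms{L}(\Lambda)\bigr)\;\simeq\;H^0\bigl(\overline{\Gr}_{G,\mu},\,L^c\bigr),
\]
where both sides carry the equivariant structures inherited from $L^+\mc{G}$.

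The last step is to compare these ``global'' equivariant structures with the ``classical'' ones appearing in the theorem statement. On the generic fiber, $L^+\mc{G}$ restricts (after the conjugation $\mr{Ad}_{z^{-\vartheta}}$ arising from the local isomorphism $\phi$ of Theorem \ref{paraohoric_thm}) to $G(\mc{O})^{\tau,\circ}$, and the embedding $G_\vartheta\hookrightarrow G(\mc{O})^{\tau,\circ}$ induced from $L^+\mc{G}$ coincides on the nose with the natural inclusion, so no twist appears on the left-hand side. On the special fiber however, $L^+\mc{G}|_0=\mc{P}_\vartheta$, and the inclusion $G_\vartheta\hookrightarrow \mc{P}_\vartheta$ differs by conjugation from its inclusion into $G(\mc{O})^{\tau,\circ}$. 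By Corollary \ref{thm_equivcompare}, the unique $\mc{P}_\vartheta$-equivariant structure on $\ms{L}(\Lambda)$ and its unique $(G(\mc{O})^{\tau,\circ})'$-equivariant structure, pulled back to $G'_\vartheta$ (resp.\ $(T^{\tau,\circ})'$), differ exactly by the character $\lambda$. Transporting this twist through the identification above produces the factor $\rk_{-\lambda}$ in the statement.

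The main obstacle lies in this last step: identifying precisely which character measures the discrepancy between the two natural $G'_\vartheta$-equivariant structures. The source of $\lambda$ is that $G_\vartheta$ sits inside $\mc{P}_\vartheta$ via $\mr{Ad}_{t^{-h}}$ of its image in $G^{\sigma,\circ}$ (cf.\ Corollary \ref{cor_glambdamy}), and the central charge $c$ together with the grading by $h$ determines the weight by which the line bundle transforms; carrying this calculation out at the level of the central extension $\widehat{L^+\mc{G}}$ and matching it with the pinning used to define $\ms{L}(\Lambda)$ is the delicate part, and it is exactly what Corollary \ref{thm_equivcompare} is set up to handle.
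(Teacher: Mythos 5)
Your proposal is correct and follows essentially the same route as the paper: realize both sides as fibers of the line bundle $\mc{L}$ on Zhu's flat global Schubert variety, use the $G_\vartheta\times\mb{A}^1$- (resp.\ torus-) equivariant structure of Theorem \ref{equivariant} together with flatness and cohomology vanishing to identify the fibers of $\pi_*\mc{L}$ as representations, and then invoke Corollary \ref{thm_equivcompare} to convert between the $\mc{P}_\vartheta$- and $(G(\mc{O})^{\tau,\circ})'$-equivariant structures, which produces the twist by $\rk_{-\lambda}$. The only ingredient you elide is the bookkeeping for non-simply-connected $G$ (passing to the component $\Gr_{\G,\A^1}[\xi]$ via the translation $T_\xi$ and the base change $\A^1\to\bar{\A}^1$), which the paper handles in Section \ref{652935} but which does not change the strategy.
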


\subsection{Equivariance of line bundles on partial affine flag varieties}\label{sec_equivariantbundle}
In this subsection, we assume that $G$ is simply-connected.

As in Section \ref{section_flag}, we fix a non-empty subset $Y\subseteq \hat{I}_\tau$ and assume $p\nmid a_Y$. By Lemma \ref{prop_goodprime}, we can choose an interior point $\vartheta$ in the facet associated to $Y$ so that $p\nmid m$, where $m$ is the minimal positive multiple of $r$ such that $h:=\frac{m}{r}\vartheta\in X_*(T_{\mr{ad}})^\tau$. Define an automorphism $\sigma:=\tau\circ \mr{Ad}_{(\epsilon_m)^{m\vartheta/r}}$ of $G$, whose order is $m$, see Lemma \ref{Order_lemma}. 

Recall the groups $\mc{P}_Y^-$ and $\mc{I}^-$ in Definition \ref{def_Py} and the parahoric group scheme $\ms{G}_{-\vartheta,\infty}$ over $\bar{\mc{O}}_\infty$ defined in Section \ref{sec_parahoric}. 
\begin{prop}\label{prop_3284932}
The isomorphism  $\mr{Ad}_{t^{h}}:G[z,z^{-1}]^\tau\simeq G[t,t^{-1}]^\sigma$  restricts to the following isomorphism 
\begin{equation}\label{eq_Pminus}
\mr{Ad}_{t^h}:\mc{P}_Y^-\simeq G[t^{-1}]^{\sigma}.
\end{equation}
As a consequence, $\mc{P}_Y^-$ has an integral ind-group scheme structure and $\mc{P}_Y^-=\ms{G}_{-\vartheta,\infty}(\bar{\mc{O}}_\infty)\cap G[z,z^{-1}]^\tau$.
\end{prop}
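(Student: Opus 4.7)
The plan is to establish the first assertion by a direct root-subgroup-by-root-subgroup check, mirroring the argument of Theorem \ref{paraohoric_thm} but translated to the point at infinity. Concretely, since $\mc{P}_Y^-$ is generated by $T^{\tau,\circ}$ and the subgroups $\ms{U}_{\alpha,\langle\vartheta,\alpha\rangle}^-$, and since $t = z^{r/m}$ satisfies $\sigma(t) = \epsilon_m^{-1}t$, I would first verify that the Chevalley--Steinberg pinning $x_\alpha$ satisfies
\[
\mr{Ad}_{t^h}\bigl(x_\alpha(u)\bigr) = x_\alpha\bigl(t^{-\langle h,\alpha\rangle} u\bigr),
\]
and then use the identity $\langle h,\alpha\rangle = (m/r)\langle\vartheta,\alpha\rangle$ together with $\nu_\infty^{(t)} = (m/r)\nu_\infty^{(z)}$ to conclude that any $u$ with $\nu_\infty(u) \geq \langle\vartheta,\alpha\rangle$ produces an element of $x_\alpha(\rk[t^{-1}])$. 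Since $\mr{Ad}_{t^h}$ fixes $T^{\tau,\circ}$, this shows $\mr{Ad}_{t^h}(\mc{P}_Y^-) \subseteq G[t^{-1}]^\sigma$.

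For the reverse inclusion I would invoke the infinity-analogue of Proposition \ref{eq_levi}: the group $G[t^{-1}]^\sigma$ is generated by $G^{\sigma,\circ}$ together with the root subgroups $x_\alpha(t^{-1}\rk[t^{-1}])$ for $\alpha$ ranging over the positive roots of $G^{\sigma,\circ}$ relative to a negative chamber at $\infty$. Applying $\mr{Ad}_{t^{-h}}$ to each such generator and tracking degrees in $t^{-1}$ via the same conversion as above shows the image lies in $\mc{P}_Y^-$. Combined with the analogous calculation for the Levi $G^{\sigma,\circ} = \mr{Ad}_{t^h}(M_Y^-)$, this yields the opposite inclusion, completing the proof of the isomorphism \eqref{eq_Pminus}.

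For the consequence, the integral ind-group scheme structure is transported from $G[t^{-1}]^\sigma$, which naturally sits as a closed ind-subgroup scheme of the Weil restriction $\mr{Res}_{\rk[t^{-1}]/\rk[t^{-m}]}(G_{\rk[t^{-1}]})$, via the isomorphism \eqref{eq_Pminus}. For the identification $\mc{P}_Y^- = \ms{G}_{-\vartheta,\infty}(\bar{\mc{O}}_\infty) \cap G[z,z^{-1}]^\tau$, I would apply Theorem \ref{paraohoric_thm} at the point $\infty$ (with parameter $-\vartheta$, which sits in the same facet up to sign) and the resulting isomorphism $\phi_\infty$ of \eqref{eq_phi2} to reduce the statement to $G[t^{-1}]^\sigma = G[[t^{-1}]]^{\sigma,\circ} \cap G[t,t^{-1}]^\sigma$, which is tautological. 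The main obstacle I anticipate is justifying the generation statement used for the reverse inclusion cleanly, since Proposition \ref{eq_levi} was phrased for the ``positive'' parahoric $\mc{P}_Y$; this should however follow by the same argument with the opposite valuation, using that $p\nmid m$ and $\mr{Lie}(G^{\sigma,\circ}) = \mf{g}^\sigma$ as in the proof there.
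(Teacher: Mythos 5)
Your forward inclusion $\mr{Ad}_{t^h}(\mc{P}_Y^-)\subseteq G[t^{-1}]^{\sigma}$ is fine and is exactly what the paper asserts "by definition of $\mc{P}_Y^-$". The problems are in the other two steps.

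First, the reverse inclusion. You invoke an "infinity-analogue of Proposition \ref{eq_levi}" asserting that the \emph{infinite-dimensional} group $G[t^{-1}]^{\sigma}$ is generated by $G^{\sigma,\circ}$ and the jets of root subgroups, and you propose to justify it "by the same argument with the opposite valuation". But Proposition \ref{eq_levi} concerns the finite-dimensional connected reductive group $G^{\sigma,\circ}$ and is proved via Springer's generation theorem for reductive groups; that argument says nothing about generation of the ind-group $G[t^{-1}]^{\sigma}$ by its root subgroups, which is a statement of a genuinely different nature (for the untwisted case it is already a nontrivial fact about $G(\rk[t^{-1}])$, tied to Kac--Moody or elementary-generation results). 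The paper avoids any generation statement: it observes that $B[t^{-1}]^{\sigma}U^-[t^{-1}]^{\sigma}$ is \emph{open} in $G[t^{-1}]^{\sigma}$ and contained in $\mr{Ad}_{t^h}(\mc{P}_Y^-)$, so once $G[t^{-1}]^{\sigma}$ is known to be irreducible, the image is a subgroup containing a dense open subset and hence equals the whole group.

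Second, and more seriously, you do not prove integrality of $G[t^{-1}]^{\sigma}$ at all; you merely say the ind-scheme structure is "transported" from its realization as a closed ind-subscheme of a Weil restriction. A closed ind-subscheme of fixed points has no reason a priori to be reduced or irreducible, and establishing that it is constitutes the bulk of the paper's proof: one writes $G[t^{-1}]^{\sigma}=G^{\sigma}\ltimes L^{--}\ms{G}_\sigma$ with $L^{--}\ms{G}_\sigma=\ker(\mr{ev}_\infty)$, identifies $L^{--}\ms{G}_\sigma$ with an open subscheme of the twisted affine Grassmannian $\Gr_{\ms{G}_\sigma}$ via \cite[Lemma 3.1]{HR21}, uses the reducedness and irreducibility of $\Gr_{\ms{G}_\sigma}$ from \cite{PR08} (which requires Theorem \ref{paraohoric_thm}), and uses connectedness of $G^{\sigma}$ for simply-connected $G$. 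Without this input both the word "integral" in the conclusion and the density argument above are unsupported, so this is a genuine gap rather than an alternative route. (Your reduction of the final identity to $G[t^{-1}]^{\sigma}=G[[t^{-1}]]^{\sigma,\circ}\cap G[t,t^{-1}]^{\sigma}$ is acceptable, though even there the removal of the neutral-component condition silently uses that $G^{\sigma}$ is connected.)
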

\begin{proof}
It is clear that the group $G[t^{-1}]^\sigma$ admits a natural ind-group scheme structure from $L^-\ms{G}_\sigma$, i.e. $L^-\ms{G}_\sigma (\rk)=G[t^{-1}]^\sigma$. Define the ind-group scheme $L^{--} \ms{G}_\sigma$ as follows
\[ L^{--} \ms{G}_\sigma:=\ker ({\rm ev}_\infty:   L^-\ms{G}_\sigma  \to G^\sigma ) .\]
By Theorem \ref{paraohoric_thm} and \cite[Theorem 0.1, Theorem 0.2]{PR08},  the twisted affine Grassmannian $\Gr_{\ms{G}_\sigma} :=G((t))^\sigma/ G[[t]]^\sigma$ is reduced and irreducible. Moreover, by \cite[Lemma 3.1]{HR21} the morphism $ L^{--}\ms{G}_\sigma \to  \Gr_{\ms{G}_\sigma}$ given by $g\mapsto g\cdot e$ is an open embedding. Thus, $L^{--} \ms{G}_\sigma$ is integral. Since $G$ is simply-connected, $G^\sigma$ is connected. Therefore, $G[t^{-1}]^\sigma$ is integral.  

By definition of $\mc{P}_Y^-$,  $\mr{Ad}_{t^h} ( \mc{P}_Y^{-})$ is contained in $G[t^{-1}]^\sigma$. Note that, $B[t^{-1}]^\sigma U^{-}[t^{-1}]^\sigma$ is Zariski open in $G[t^{-1}]^\sigma$, and $B[t^{-1}]^\sigma$ and $U^-[t^{-1}]^\sigma$ are both contained in $\mr{Ad}_{t^h} ( \mc{P}_Y^{-})$. Thus, $\mr{Ad}_{t^h} ( \mc{P}_Y^{-})= G[t^{-1}]^\sigma$. Then, the group $\mc{P}_Y^-$ naturally inherits an integral ind-group scheme structure from $G[t^{-1}]^\sigma$.

Note that $G[t^{-1}]^\sigma = G((t^{-1}))^\sigma \cap G[t,t^{-1}]^\sigma$. Via the map $\mr{Ad}_{t^h}$, we have $\mc{P}_Y^-=\ms{G}_{-\vartheta,\infty}(\bar{\mc{O}}_\infty)\cap G[z,z^{-1}]^\tau$.
\end{proof}

\begin{remark}
In the above proposition, the ind-group scheme structure on $\mc{P}_Y^-$ relies on the condition $p\not \mid  a_Y$. It is possible that this condition can be removed. However, we don't know if there is an appropriate reference. 
\end{remark}
Recall that $M_Y=\mr{Ad}_{t^{-h}}(G^\sigma)$. As a corollary of Theorem \ref{paraohoric_thm} and Proposition \ref{prop_3284932}, we have
\begin{cor}
We have $M_Y=
\mc{P}_Y\cap\mc{P}_Y^{-}$, and $M_Y$ is a reductive group.\qed
\end{cor}

Consider the full affine flag variety $\Fl=G(\mc{K})^\tau/ \mc{I}$ and the natural morphism $p_Y:\Fl\to \Fl_Y.$
Let $\mb{P}^1_j$ denote the image of the morphism $\mb{P}^1\simeq \mc{P}_{\hat{I}_\tau\setminus\{j\}}/\mc{I}\to \Fl$. For each $i\in Y$, let $\ms{L}_i$ denote the line bundle $\ms{L}(\Lambda_i)$ on $\Fl_Y$.  Then, we have 
$$\deg\Big(p_Y^*(\ms{L}_i)\big|_{\mb{P}^1_j}\Big)=\delta_{i,j}$$ 
for any $i\in Y$ and $j\in \hat{I}_\tau$, cf.\,\cite[Section 10]{PR08}. 

Let $e_0$ denote the base point of $\Fl_Y$. 
We have the following decompositions 
\begin{equation}\label{eq_birkhoff}
\Fl_Y=\bigsqcup_{w\in  W_{\mr{aff}}/W_{\breve{Y}}} \mc{I} \dot{w} e_0,\quad \Fl_Y=\bigsqcup_{w\in  W_{\mr{aff}}/W_{\breve{Y}}} \mc{I}^- \dot{w} e_0,
\end{equation}
where $\dot{w}$ is a lift of $w$ in $G(\mc{K})^\tau$. In fact, $\Fl_Y$ can be identified with the Kac-Moody flag variety $\Fl_Y^{\mr{KM}}$ in the sense of \cite{Ku87} and \cite{Ma88}, cf.\,\cite[\S 9.f]{PR08}. Thus, the decompositions in \eqref{eq_birkhoff} arise from the decompositions for $\Fl_Y^{\mr{KM}}$ in \cite{Ku87} and \cite{Ma88}.

For any $w\in W_{\rm aff}$, let $\Fl_Y^w$ denote the $\mc{I}^-$-orbit $\mc{I}^-we_0$ in $\Fl_Y$ and let $\overline{\Fl}_Y^w$ denote the closure of $\Fl_Y^w$. When $Y=\hat{I}_\tau$, we simply use the notations $\Fl^w$ and $\overline{\Fl}^w$.
\begin{lem}\label{lem_oppositeorder}
$\mc{I}^- y e_0 \subseteq \overline{\Fl}_Y^w$ if and only if $w_{\rm min}\preccurlyeq  y_{\rm min}$, where $w_{\rm min}$ represents the minimal element in the left coset $w  W_{\breve{Y}}$ (similarly for $y_{\rm min}$).  

\end{lem}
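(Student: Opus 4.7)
The strategy is to pull the closure statement back to the full affine flag variety $\Fl$ via the projection $p_Y:\Fl\to \Fl_Y$, invoke the standard opposite-Iwahori closure relation on $\Fl$ coming from its Kac--Moody structure, and then translate the answer back to $\Fl_Y$ using a standard property of the Bruhat order on parabolic quotients.

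First, I would observe that $p_Y$ is $\mc{I}^-$-equivariant with fibers isomorphic to $\mc{P}_Y/\mc{I}$, which is the finite-dimensional flag variety of the Levi subgroup $M_Y\subset \mc{P}_Y$ described in Proposition \ref{eq_levi}; in particular $p_Y$ is a proper morphism of ind-schemes. Since every element of $W_{\bar{Y}}$ admits a lift inside $\mc{P}_Y$ and hence fixes $e_0\in \Fl_Y$, one obtains $p_Y(\Fl^{w_{\min}})=\mc{I}^-\dot w_{\min}e_0=\mc{I}^-\dot we_0=\Fl_Y^w$, and by properness of $p_Y$,
\[
\overline{\Fl}_Y^w=\overline{p_Y(\Fl^{w_{\min}})}=p_Y\bigl(\overline{\Fl}^{w_{\min}}\bigr).
\]

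Next, identifying $\Fl$ with the affine Kac--Moody flag variety via \cite[\S 9.f]{PR08}, I would invoke the standard opposite-Iwahori closure relation (cf.\,\cite{Ku,Ma88}), namely $\overline{\Fl}^{v}=\bigsqcup_{x\in W_{\rm aff},\,v\preccurlyeq x}\Fl^{x}$. Substituting $v=w_{\min}$ and pushing through $p_Y$ yields
\[
\overline{\Fl}_Y^w=\bigcup_{x\in W_{\rm aff},\,w_{\min}\preccurlyeq x}\Fl_Y^{xW_{\bar{Y}}}.
\]
For $y\in W_{\rm aff}$, the orbit $\mc{I}^-\dot ye_0=\Fl_Y^{yW_{\bar{Y}}}$ lies in this union iff some $x\in yW_{\bar{Y}}$ satisfies $w_{\min}\preccurlyeq x$. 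Using the standard fact that the map $W_{\rm aff}\to W_{\rm aff}/W_{\bar{Y}}$, $x\mapsto x_{\min}$, is order-preserving (which follows e.g.\,from pushing Iwahori Schubert varieties along the corresponding projection of finite-dimensional flag varieties of $G^{\tau,\circ}$, or directly from the subword characterization of Bruhat order), I would deduce the equivalence with $w_{\min}\preccurlyeq y_{\min}$: the forward direction follows by projecting $w_{\min}\preccurlyeq x$ to the parabolic quotient and using $(w_{\min})_{\min}=w_{\min}$, while the converse is witnessed by taking $x=y_{\min}\in yW_{\bar{Y}}$.

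The technical heart of the argument is the opposite-Iwahori closure relation on the full affine flag variety $\Fl$: because $\mc{I}^-$-orbits in an affine flag variety are of semi-infinite type (finite codimension rather than finite dimension), the closure must be interpreted in the ind-scheme topology, but the resulting decomposition by opposite Bruhat order is standard in Kac--Moody flag variety theory, so I would invoke it rather than reprove it. The remaining steps are either formal consequences of properness and equivariance of $p_Y$ or routine Bruhat-order manipulations, so no serious obstacle is expected beyond this invocation.
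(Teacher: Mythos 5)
Your argument is correct, and it takes a different route from the paper: the paper's proof is a two-line citation of the closure relation for opposite-Iwahori orbits in the \emph{partial} flag variety directly (\cite[Prop.\,7.1.21]{Ku} in characteristic $0$, and \cite[Theorem 1.26, Lemma 2.9]{Kato20} in general), whereas you reduce to the \emph{full} affine flag variety via the ind-proper projection $p_Y:\Fl\to\Fl_Y$, invoke the closure relation $\overline{\Fl}^{v}=\bigsqcup_{v\preccurlyeq x}\Fl^{x}$ there, and then descend using the Birkhoff decomposition \eqref{eq_birkhoff} (which makes the $\mc{I}^-$-orbits indexed by distinct cosets disjoint) together with the order-preserving property of $x\mapsto x_{\min}$ (the same \cite[Lemma 1.3.18]{Ku} the paper uses elsewhere). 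All the individual steps check out: $p_Y(\Fl^{w_{\min}})=\Fl_Y^w$ because $W_{\bar Y}$ lifts into $\mc{P}_Y$ and fixes $e_0$; ind-properness of $p_Y$ (fibers $\mc{P}_Y/\mc{I}$ are projective) gives $p_Y(\overline{\Fl}^{w_{\min}})=\overline{\Fl}_Y^w$; and the final Bruhat-order equivalence is routine. What your approach buys is a self-contained derivation of the parabolic case from the full-flag case, at the cost of needing properness of $p_Y$ as an extra (standard) input; what the paper's approach buys is brevity. One caveat: your citation of the full-flag closure relation via \cite{Ku,Ma88} only covers characteristic $0$; in positive characteristic the same reduction works verbatim, but the full-flag input must instead be taken from \cite{Kato20}, exactly as in the paper's proof.
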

\begin{proof}
When char($\rk$)=0, it follows from \cite[Prop.7.1.21]{Ku}. For general characteristic, it follows from \cite[Theorem 1.26, Lemma 2.9]{Kato20}.
\end{proof}
\begin{lem}\label{45395}
$\overline{\Fl}_Y^w\cap \Fl_{Y,y}\neq \emptyset$ if and only if $w_{\rm min}\preccurlyeq  y_{\rm min}$. 
\end{lem}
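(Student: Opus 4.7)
The plan is to deduce both implications from the opposite Birkhoff stratification $\Fl_Y=\bigsqcup_{z\in W_{\mr{aff}}/W_{\bar Y}}\mc{I}^- \dot z e_0$ in \eqref{eq_birkhoff} combined with Lemma \ref{lem_oppositeorder}, together with the open Richardson nonemptiness criterion in $\Fl_Y$.

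For the ``if'' direction, assume $w_{\rm min}\preccurlyeq y_{\rm min}$. Lemma \ref{lem_oppositeorder} gives $\mc{I}^- \dot y e_0\subseteq \overline{\Fl}_Y^w$, and in particular the point $\dot y e_0$ lies in $\overline{\Fl}_Y^w$. Since $\dot y e_0\in \mc{I}\dot y e_0=\Fl_{Y,y}$, this point witnesses the nonemptiness of the intersection. This part is immediate.

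For the ``only if'' direction, combining Lemma \ref{lem_oppositeorder} with the opposite Birkhoff decomposition yields
\[
\overline{\Fl}_Y^w=\bigsqcup_{\substack{z\in W_{\mr{aff}}/W_{\bar Y}\\ w_{\rm min}\preccurlyeq z_{\rm min}}}\mc{I}^-\dot z e_0.
\]
Thus $\overline{\Fl}_Y^w\cap \Fl_{Y,y}\neq\emptyset$ forces $\mc{I}^-\dot z e_0\cap \mc{I}\dot y e_0\neq\emptyset$ for some coset $zW_{\bar Y}$ with $w_{\rm min}\preccurlyeq z_{\rm min}$. I then invoke the open Richardson nonemptiness criterion in the partial affine flag variety: $\mc{I}^-\dot z_{\rm min}e_0\cap \mc{I}\dot y_{\rm min}e_0\neq\emptyset$ if and only if $z_{\rm min}\preccurlyeq y_{\rm min}$. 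Transitivity of the Bruhat order then gives $w_{\rm min}\preccurlyeq y_{\rm min}$.

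The only nontrivial ingredient is the Richardson criterion in $\Fl_Y$, which I would cite in the same spirit as Lemma \ref{lem_oppositeorder} (from \cite{Ku} in characteristic $0$ and from \cite{Kato20} in general characteristic). If a direct citation in the partial setting is inconvenient, one can reduce to the full flag variety $\Fl$ via the projection $p_Y\colon \Fl\to \Fl_Y$: lift a point in the intersection to a pair $(\dot z_{\rm min}\dot v e_0,\dot y_{\rm min}\dot v' e_0)$ with $v,v'\in W_{\bar Y}$ producing the same element of $\Fl_Y$, apply the well-known open Richardson criterion $\mc{I}^-\dot a\mc{I}/\mc{I}\cap \mc{I}\dot b\mc{I}/\mc{I}\neq\emptyset \iff a\preccurlyeq b$ for $a,b\in W_{\mr{aff}}$, and then pass back to the quotient by using that the minimal coset representatives realize the induced Bruhat order on $W_{\mr{aff}}/W_{\bar Y}$. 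This parabolic-to-minimal-rep reduction is where I expect the only bookkeeping to arise, but it is formal once the full-flag Richardson statement is granted.
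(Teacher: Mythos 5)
Your proof is correct, and the ``if'' direction is word-for-word the paper's argument (the point $\dot y e_0$ itself witnesses nonemptiness via Lemma \ref{lem_oppositeorder}). For the converse you take a genuinely different route: you decompose $\overline{\Fl}_Y^w$ into opposite cells $\mc{I}^-\dot z e_0$ with $w_{\rm min}\preccurlyeq z_{\rm min}$ and then invoke the open Richardson nonemptiness criterion, reducing to the full flag variety and the compatibility of the Bruhat order with minimal coset representatives (\cite[Lemma 1.3.18]{Ku}). The paper instead argues more directly and with less external input: from $\overline{\Fl}_Y^w\cap \Fl_{Y,y}\neq\emptyset$ it passes to the intersection of closures $\overline{\Fl}_Y^w\cap \overline{\Fl}_{Y,y}$, which is a nonempty projective $T^\tau$-variety and hence contains a $T^\tau$-fixed point $xe_0$; then $\mc{I}^-$-stability of $\overline{\Fl}_Y^w$ together with Lemma \ref{lem_oppositeorder} gives $w_{\rm min}\preccurlyeq x_{\rm min}$, the Schubert closure relation gives $x_{\rm min}\preccurlyeq y_{\rm min}$, and transitivity concludes. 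The two arguments are close cousins --- the standard proof of the Richardson criterion you cite is itself the same torus-fixed-point argument --- but the paper's version is self-contained given the two Birkhoff/Bruhat stratifications already on the table, whereas yours outsources the key step to a result whose clean citation in the partial Kac--Moody setting (and in positive characteristic) requires exactly the parabolic bookkeeping you defer. Your reduction via $p_Y$ does go through (the preimage of an $\mc{I}$- or $\mc{I}^-$-orbit in $\Fl_Y$ is the disjoint union of the orbits indexed by the coset $z_{\rm min}W_{\bar Y}$), so there is no gap, only a heavier dependency.
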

\begin{proof}
Assume $w_{\rm min}\preccurlyeq  y_{\rm min}$. By Lemma \ref{lem_oppositeorder}, we have $\mc{I}^- y e_0 \subseteq \overline{\Fl}_Y^w$. Hence, $\overline{\Fl}_Y^w\cap \Fl_{Y,y}\supseteq \mc{I}^- y e_0\cap \Fl_{Y,y}\ni ye_0 $. Therefore, $\overline{\Fl}_Y^w\cap \Fl_{Y,y}\neq \emptyset$. Conversely, assume $\overline{\Fl}_Y^w\cap \Fl_{Y,y}\neq \emptyset$, then $\overline{\Fl}_Y^w\cap \overline{\Fl}_{Y,y}\neq \emptyset$. Hence, the $T^\tau$-fixed point of the projective variety $\overline{\Fl}_Y^w\cap \overline{\Fl}_{Y,y}$ is nonempty. Note that the $T^\tau$-fixed points of $\Fl_Y$ are taking the form $xe_0$, where $x\in W_{\mr{aff}}$. It follows that $w_{\rm min}\preccurlyeq x_{\rm min} \preccurlyeq  y_{\rm min}$ for some $x\in W_{\mr{aff}}$.
\end{proof}

Define $^{\breve{Y}}W^{\breve{Y}}$ to be the set of minimal length coset representatives of $W_{\breve{Y}}\backslash W_{\mr{aff}}/W_{\breve{Y}}$. Let $R\subseteq {}^{\breve{Y}}W^{\breve{Y}}$ be a finite subset so that $w\in R$ and $w'\preccurlyeq w$ imply $w'\in R$. Set 
$$S:=W_{\breve{Y}}\cdot R\cdot W_{\breve{Y}}\subseteq W_{\mr{aff}}.$$ 
Then, $w\in S$ and $w'\preccurlyeq w$ implies $w'\in S$.

We define the following open subset of $\Fl_Y$,
$$V^S:=\bigcup_{w\in S} w\mc{N}^-e_0,$$
where  $\mc{N}^-$ is the unipotent radical of $\mc{I}^-$. 
\begin{lem}
We have $ V^S=\bigcup_{w\in S} \mc{I}^-we_0$. As a consequence, $V^S$ is $\mc{P}^-_Y$-stable.
\end{lem}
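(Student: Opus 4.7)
The plan is to prove the equality $V^S = \bigcup_{w\in S}\mc{I}^- we_0$ by establishing both inclusions; the $\mc{P}_Y^-$-stability then follows from the right-hand-side characterization.

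For the inclusion $\bigcup_{w\in S}\mc{I}^- we_0 \subseteq V^S$, I would show the stronger term-by-term inclusion $\mc{I}^- we_0 \subseteq w\mc{N}^- e_0$ for every $w\in W_{\mr{aff}}$. Since $T^{\tau,\circ}$ fixes $we_0$ and $\mc{I}^- = T^{\tau,\circ}\cdot\mc{N}^-$, one has $\mc{I}^- we_0 = \mc{N}^- we_0 = w\cdot(w^{-1}\mc{N}^- w)\cdot e_0$. Decomposing $w^{-1}\mc{N}^- w = (w^{-1}\mc{N}^- w\cap\mc{N})\cdot(w^{-1}\mc{N}^- w\cap\mc{N}^-)$, where $\mc{N}$ denotes the pro-unipotent radical of $\mc{I}$, the first factor lies in $\mc{I}$ and hence fixes $e_0$, giving $(w^{-1}\mc{N}^- w)\cdot e_0 \subseteq \mc{N}^- e_0$ as required.

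For the reverse inclusion $V^S\subseteq\bigcup_{w\in S}\mc{I}^- we_0$, I would use the identification $\Fl_Y \simeq \Fl_Y^{\mr{KM}}$ from \cite[\S 9.f]{PR08} together with the standard fact (cf.\,\cite{Ku}) that, in the Kac-Moody setup, $w\mc{N}^- e_0$ is an affine open chart centered at $we_0$ meeting the Birkhoff cell $\mc{I}^- ye_0$ precisely when $y_{\mr{min}}\preccurlyeq w_{\mr{min}}$ in the Bruhat order. Hence $w\mc{N}^- e_0\subseteq \bigcup_{y\preccurlyeq w}\mc{I}^- ye_0$, and since $S$ is downward closed in the Bruhat order (by the hypothesis on $R$ and the definition $S = W_{\bar{Y}}\cdot R\cdot W_{\bar{Y}}$), the inclusion follows upon taking the union over $w\in S$.

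For the consequence, once the equality is established, $V^S$ is a union of Birkhoff cells and hence $\mc{I}^-$-stable. For any $w\in S$, the $\mc{P}_Y^-$-orbit through $we_0$ equals $\mc{P}_Y^-\cdot we_0 = \bigcup_{u\in W_{\bar{Y}}}\mc{I}^-\dot uwe_0$: its $T^{\tau,\circ}$-fixed points are the $W_{\bar{Y}}$-translates of $we_0$, which follows from the Bruhat decomposition $\mc{P}_Y^- w\mc{P}_Y = \bigsqcup_{u\in W_{\bar{Y}}} \mc{I}^-\dot u\mc{I}^- w\mc{P}_Y$ and passage to the quotient $\Fl_Y$. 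Since each $\dot uw\in W_{\bar{Y}}\cdot S = S$, we obtain $\mc{P}_Y^-\cdot \mc{I}^- we_0 = \mc{P}_Y^-\cdot we_0 \subseteq V^S$, which proves $\mc{P}_Y^-$-stability. The main obstacle is the second inclusion of the equality, which requires carefully transferring the Kac-Moody description of the opposite affine chart $w\mc{N}^- e_0$ and its intersection pattern with Birkhoff cells to the partial affine flag variety $\Fl_Y$.
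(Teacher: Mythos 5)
Your argument is correct in substance and follows the same two-inclusion strategy as the paper, so let me just flag two points of comparison. In the easy inclusion, take the factorization in the other order, $w^{-1}\mc{N}^-w=(w^{-1}\mc{N}^-w\cap\mc{N}^-)\cdot(w^{-1}\mc{N}^-w\cap\mc{N})$: the factor lying in $\mc{I}$ must be the one that hits $e_0$ first and gets absorbed, whereas with your ordering the $\mc{N}$-factor acts last and does not fix $\mc{N}^-e_0$ pointwise. Both orderings of the product decomposition are valid, so this is a harmless transposition; the paper performs the identical computation as $\mc{N}^-ye_0=(\mc{N}^-\cap y\mc{N}^-y^{-1})ye_0\subseteq y\mc{N}^-e_0$. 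For the hard inclusion, the chart-meets-Birkhoff-cell statement you import as a black box (and rightly identify as the main obstacle, since the standard Kac--Moody references state it for the full flag variety) is exactly what the paper manufactures from its own Lemma \ref{45395}: it saturates by $\mc{N}^-$, writing $\mc{N}^-V^S=\bigcup_{w\in S}\mc{N}^-w\mc{N}^-e_0=\bigcup_{w\in S}\mc{N}^-(\mc{N}^+\cap w\mc{N}^-w^{-1})we_0\subseteq\bigcup_{w\in S}\mc{N}^-\mc{I}we_0$, so that a nonempty intersection of $\mc{N}^-ye_0$ with $\mc{N}^-\mc{I}we_0$ translates into a nonempty intersection of the Birkhoff cell $\mc{N}^-ye_0$ with the Schubert cell $\mc{I}we_0$, which Lemma \ref{45395} controls; the downward closure of $S$ and its stability under $W_{\bar{Y}}$ then finish as in your write-up. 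This keeps the step self-contained inside $\Fl_Y$. Your discussion of $\mc{P}_Y^-$-stability is more detailed than the paper's (which merely asserts it); only the containment $\mc{P}_Y^-we_0\subseteq\bigcup_{u\in W_{\bar{Y}}}\mc{I}^-\dot{u}we_0$ is needed, and it follows from $\mc{P}_Y^-=\bigcup_{u\in W_{\bar{Y}}}\mc{I}^-\dot{u}\mc{I}^-$ together with the BN-pair relations for $\mc{I}^-$, without invoking fixed-point arguments.
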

\begin{proof}
We write $\mc{N}^-V^S=\bigcup_{y\in S'} \mc{N}^-ye_0$ for some $S'\subseteq W_{\mr{aff}}$. Clearly, $S\subseteq S'$.
Note that  
$$\mc{N}^-V^S= \bigcup_{w\in S} \mc{N}^-w\mc{N}^-e_0= \bigcup_{w\in S} \mc{N}^-(\mc{N}^+\cap w\mc{N}^-w^{-1})we_0=\bigcup_{w\in S} \mc{N}^-\mc{I}we_0,$$
where $\mc{N}^+$ is the unipotent radical of $\mc{I}$. Then, for each $y\in S'$, there exists $w\in S$ such that $\mc{N}^-ye_0\cap \mc{N}^-\mc{I}we_0\neq \emptyset$. This implies that $\mc{N}^-ye_0\cap \mc{I}we_0\neq \emptyset$. By Lemma \ref{45395}, we get $y_{\rm min}\preccurlyeq  w_{\rm min}$. Since $S$ is saturated and stable under right action of $W_{\breve{Y}}$, we must have $y\in S$. Therefore, $\mc{N}^-V^S=\bigcup_{y\in S'} \mc{N}^-ye_0\subseteq \bigcup_{y\in S} \mc{N}^-ye_0$. It follows that $V^S\subseteq \mc{N}^-V^S\subseteq \bigcup_{y\in S} \mc{N}^-ye_0$. Conversely,  we have 
$$\mc{N}^-ye_0=(\mc{N}^-\cap y\mc{N}^-y^{-1})ye_0\subseteq y\mc{N}^-e_0.$$
Hence, $\bigcup_{y\in S} \mc{N}^-ye_0\subseteq V^S$. Therefore, $ V^S=\bigcup_{y\in S} \mc{N}^-ye_0=\bigcup_{y\in S} \mc{I}^-ye_0$.
\end{proof}
Set $\overline{\Fl}^w_Y(S):=\overline{\Fl}_Y^w\cap V^S$. In particular, when $Y=\hat{I}_\tau$, we simply denote it by $\overline{\Fl}^w(S)$. For each $i\in Y$, let $\ms{L}_i$ denote the line bundle $\ms{L}(\Lambda_i)$ on $\Fl_Y$. 

\begin{prop}\label{thm_lgequivarian}
For any $i\in Y$, the line bundle $\ms{L}_i$ has a unique $\mc{P}^-_Y$-equivariant structure with $M_Y$ acting trivially on the fiber $\ms{L}_i|_{e_0}$ at the base point. Moreover, the line bundle $\ms{L}_i$ has a unique $\mc{P}_Y$-equivariant structure with $M_Y$ acting trivially on $\ms{L}_i|_{e_0}$.
\end{prop}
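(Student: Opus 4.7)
The plan is to adapt Faltings' equivariant construction \cite{Faltings03} to the parahoric setting, using the generation theorem (Theorem \ref{563296}) and the ind-group scheme structure on $\mc{P}^-_Y$ (Proposition \ref{prop_3284932}) as the essential inputs.

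For the $\mc{P}_Y$-equivariance, I would use the identification of $\Fl_Y$ with a Kac-Moody partial flag variety (cf.\,\cite[\S 9.f]{PR08}). Under this identification, $\ms{L}_i=\ms{L}(\Lambda_i)$ arises from the integrable highest weight module $V(\Lambda_i)$ via the Borel-Weil construction, and thereby inherits a natural $\mc{P}_Y$-equivariant structure. Under this natural structure, $M_Y\subseteq \mc{P}_Y$ acts on the highest weight line $\ms{L}_i|_{e_0}$ through a character $\chi_i$ of $M_Y$; extending $\chi_i$ to a character of $\mc{P}_Y$ (trivially on the pro-unipotent radical) and twisting by $\chi_i^{-1}$ gives the normalized equivariant structure. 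Uniqueness is then immediate: any two normalized structures differ by a character of $\mc{P}_Y$ that is trivial on $M_Y$, and since $\mc{P}_Y$ is generated by $M_Y$ and pro-unipotent subgroups (by Theorem \ref{563296} together with Proposition \ref{eq_levi}), this character must be trivial.

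For the $\mc{P}^-_Y$-equivariance, the line bundle $\ms{L}_i$ determines a central extension
\[ 1 \to \mb{G}_\m \to \widehat{\mc{P}^-_Y} \to \mc{P}^-_Y \to 1, \]
and an equivariant structure is equivalent to a splitting. My strategy is to split piecewise and then glue. Over $M_Y\subseteq \mc{P}_Y\cap\mc{P}^-_Y$, the normalized $\mc{P}_Y$-equivariance from the first step supplies a canonical splitting. Over each negative root subgroup $\ms{U}^-_{\alpha,\langle\vartheta,\alpha\rangle}$, which is pro-unipotent and a filtered colimit of affine spaces, the extension splits by direct construction (obstruction-free, since central extensions of vector groups by $\mb{G}_\m$ are trivial in the relevant setting). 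To combine these local splittings into a splitting of $\widehat{\mc{P}^-_Y}$, I would use Proposition \ref{prop_3284932} to view $\mc{P}^-_Y$ as the integral ind-group scheme $G[t^{-1}]^\sigma$ and then apply the generation theorem (Theorem \ref{563296}, transported to the opposite setting through $\mr{Ad}_{t^h}$) to conclude that $\mc{P}^-_Y$ is generated by $M_Y$ together with the negative root subgroups.

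The main obstacle is checking that the splittings agree on overlaps, which amounts to verifying Steinberg-type commutator relations inside $\widehat{\mc{P}^-_Y}$. This is the core calculation, analogous to the one in \cite{Faltings03}, and I expect it to reduce — exactly as in the proof of Theorem \ref{563296} — to explicit $\mr{SL}_2$ and $\mr{SL}_3$ identities, with the parahoric valuation bounds $\nu_\infty\geq \langle\vartheta,\alpha\rangle$ controlling the position of the relevant matrices in $\mc{P}^-_Y$. Once the global splitting is established, uniqueness under the $M_Y$-normalization again follows because any character of $\mc{P}^-_Y$ trivial on $M_Y$ is automatically trivial on all negative root subgroups, hence on $\mc{P}^-_Y$.
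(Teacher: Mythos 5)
Your route to the $\mc{P}_Y$-equivariance (Kac--Moody Borel--Weil plus a normalizing twist, with uniqueness from the absence of characters on the pro-unipotent radical) is a workable alternative to the paper's argument, which instead splits the central extension over the finite-dimensional quotients $\mc{P}_{Y,n}=M_Y\ltimes U_{Y,n}$ using the triviality of $\mr{Pic}(U_{Y,n})$. You should, however, say explicitly that the Borel--Weil structure is a priori equivariant only for the Kac--Moody parabolic, i.e.\ for a $\mb{G}_{\rm m}$-central extension of $\mc{P}_Y$, and that it descends to $\mc{P}_Y$ itself precisely because the twist by $-\Lambda_i$ kills the central character; also note that the paper derives the $M_Y$-splitting from the $\mc{P}^-_Y$ part, so your reversed order makes the $\mc{P}^-_Y$ part carry no less weight.

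The genuine gap is in the $\mc{P}^-_Y$ part. Splitting the extension $\widehat{\mc{P}^-_Y}$ separately over $M_Y$ and over each root subgroup and then invoking generation does not yield a splitting of $\mc{P}^-_Y$: generation only says the subgroup generated by the lifted images surjects onto $\mc{P}^-_Y$, and to see that it meets the central $\mb{G}_{\rm m}$ trivially one must check that \emph{every} relation among the generators of $\mc{P}^-_Y\simeq G[t^{-1}]^\sigma$ lifts, i.e.\ that a certain $2$-cocycle vanishes. This is not the same kind of computation as in Theorem \ref{563296}: there one shows certain torus elements lie in the subgroup generated by root subgroups (a generation statement), whereas here you would need a presentation of the ind-group $G[t^{-1}]^\sigma$ and a verification of each defining relation inside $\widehat{\mc{P}^-_Y}$; the Steinberg commutator relations are not known to present this group, and you give no argument that the relevant cocycle is trivial. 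The paper sidesteps all of this by Faltings' divisor construction: it realizes $\ms{L}_i$ as $\ms{O}(D)$ for the explicitly $\mc{P}^-_Y$-stable divisor $D=\overline{\Fl}^{s_i}_Y(S)$ (an opposite Schubert divisor inside the $\mc{P}^-_Y$-stable open set $V^S$), passes to the finite-dimensional smooth quotient $\mc{N}^-(n)\backslash V^S$ where a stable divisor automatically gives an equivariant line bundle with $M_Y$ fixing the canonical section, and then identifies $\ms{O}(D)$ with $\ms{L}_i$ by a degree computation on the curves $\mb{P}^1_j$. To repair your argument you must either supply a presentation of $G[t^{-1}]^\sigma$ and verify its relations in the extension, or replace that step by the divisor argument.
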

\begin{proof}
Following the idea of \cite[Theorem 7]{Faltings03}, we shall construct a $\mc{P}^-_Y$-equivariant
line bundle $L_i$ on $\Fl_Y$, and then show that $L_i$ is isomorphic to $\ms{L}_i$.

Given any $i\in Y$, choose $R$ as above so that $s_i\in R$. Then, we have $W_{\breve{Y}}s_i\subseteq S$.  Set $D:=\overline{\Fl}^{s_i}_Y(S)$. Then,  $\overline{\Fl}^{w}_Y(S)\subseteq D$ for any $w\in W_{\breve{Y}}s_i$ since $w\succcurlyeq s_i$.  Thus, $D$ is $\mc{P}^-_Y$-stable. 

For each $w\in S$, we define a subgroup $\mc{N}_w^-:=\mc{N}^-\cap w\mc{N}^-w^{-1}$. Consider the subgroup $\mc{N}^-(n)\subseteq \mc{N}^-$ whose elements are identity modulo $z^{-n}$. Choose $n$ big enough so that $\mc{N}^-(n)\subseteq \mc{N}_w^-$ for all $w\in S$. By the same argument of \cite[Lemma 6]{Faltings03}, $\mc{N}^-(n)\backslash V^S$ is covered by finite many open subsets, which are finite dimensional affine spaces of same dimension. Thus, $\mc{N}^-(n)\backslash V^S$ is a smooth variety.

Recall that $D$ is $\mc{P}^-_Y$-stable and of codimension  $1$ in $V^S$. Thus, $\mc{N}^-(n)\backslash D$ is a $\mc{N}^-(n)\backslash\mc{P}^-_Y$-stable divisor of $\mc{N}^-(n)\backslash V^S$. This defines an $\mc{N}^-(n)\backslash\mc{P}^-_Y$-equivariant line bundle $L_{i,S}$ on $\mc{N}^-(n)\backslash V^S$ and $\mc{N}^-(n)\backslash M_Y$ acts trivially on the canonical section $1\in \Gamma(\mc{N}^-(n)\backslash V^S, L_{i,S})$. Thus, the action of $\mc{N}^-(n)\backslash M_Y$ on $L_{i,S}|_{e_0}$ is trivial. By taking $S$ arbitrarily large, we get a line bundle  on $\Fl_Y$, denoted by $L_i$. From construction, this line bundle $L_i$ has an $\mc{P}^-_Y$-equivariant structure such that $M_Y$ acts on $L_i|_{e_0}$ trivially.

Consider the natural morphism $p_Y:\Fl\to \Fl_Y$. We have $p_Y^{-1}(\overline{\Fl}^{s_i}_Y(S))=\overline{\Fl}^{s_i}(S)$.  Thus, the degree of  the restriction of the pull-back $p_Y^*(L_i)|_{\mb{P}^1_j}$ equals the degree of the divisor $\overline{\Fl}^{s_i}(S)\cap \mb{P}^1_j$ in $\mb{P}^1_j$, and equals to the intersection number $\#\{\overline{\Fl}^{s_i}(S)\cap \mb{P}^1_j\}$ since $\overline{\Fl}^{s_i}(S)\cap \mb{P}^1_j$ intersects transversely. By Lemma \ref{45395}, $\overline{\Fl}^{s_i}\cap \mb{P}^1_j$ is empty if $j\neq i$. When $j=i$, from the proof of Lemma \ref{45395}, we have $\overline{\Fl}^{s_i}\cap \mb{P}^1_i=\mc{I}^-s_ie_0\cap \mc{I}s_ie_0\subsetneqq \mb{P}^1_i$. Hence, every point in $\overline{\Fl}^{s_i}\cap \mb{P}^1_i$ is fixed by $T^\tau$. Thus, $\overline{\Fl}^{s_i}\cap \mb{P}^1_i=\{s_ie_0\}$.  Therefore, $\deg\big(p_Y^*(L_i)|_{\mb{P}^1_j}\big)$ equals $1$ if $j=i$ and equals $0$ if $j\neq i$. This shows that $L_i\simeq \ms{L}_i$ as line bundles on $\Fl_Y$. Therefore, $\ms{L}_i$ has an $\mc{P}^-_Y$-equivariant structure such that $M_Y$ acts on $\ms{L}_i|_{e_0}$ trivially. This equivariant structure is unique since the character of $M_Y$ uniquely extends to a character of $\mc{P}^-_Y$. This proves the first statement.

We now prove the second statement. 
Let $K_n$ be the $n$-th congruence subgroup in $G(\mc{O} )^\tau$, i.e. whose elements are identity modulo $z^n$.  Let $\mathcal{P}_{Y,n}$ be the quotient group $\mathcal{P}_{Y}/ K_n$. 
For any $w\in W_{\rm aff}$, we consider the $\mathcal{P}_Y$-Schubert variety $_Y\overline{\Fl}_{Y,w}$.  It is well-known that when $n$ is sufficiently large,  the action of $\mathcal{P}_{Y}$ factors through $\mathcal{P}_{Y,n}$. Consider the restriction of $\ms{L}_i$ to $_Y\overline{\Fl}_{Y,w}$, which will still be denoted by $\ms{L}_i$. Let $\widehat{\mc{P}_{Y,n}}$ be the $\mb{G}_\m$-central extension of $\mc{P}_{Y,n}$ associated to the line bundle $\ms{L}_i$ on $_Y\overline{\Fl}_{Y,w}$.  Note that $\mc{P}_{Y,n}$ is the semidirect product of $M_{Y}$ with a unipotent subgroup $U_{Y,n}$. By the first part of the proposition, we have a splitting $M_{Y}\to \widehat{\mc{P}_{Y,n}}$ over $M_{Y}$. On the other hand, by \cite[Corollary 5.7]{Co08}, the group $\mr{Pic}(U_{Y,n})$ classifies the isomorphism classes of extensions of $U_{Y,n}$ by $\mb{G}_m$. Moreover, by \cite[Theorem1]{Popov}, the Picard group $\mr{Pic}(U_{Y,n})$ of unipotent group is trivial. Hence, $U_{Y,n}$ only has trivial central extension by $\mb{G}_m$. Thus, we have a unique splitting $U_{Y,n}\to \widehat{\mc{P}_{Y,n}}$ over $U_{Y,n}$. These splittings uniquely glue to a splitting $\mc{P}_{Y,n}\to \widehat{\mc{P}_{Y,n}}$. Therefore, the line bundle $\ms{L}_i$ on $_Y\overline{\Fl}_{Y,w}$ is $\mc{P}_{Y,n}$-equivariant. Taking a limit of $_Y\overline{\Fl}_{Y,w}$ with respect to the Bruhat order on $W_{\rm aff}$, we conclude that the line bundle $\ms{L}_i$ on $\Fl_{Y}$ is $\mc{P}_{Y}$-equivariant. From our argument, the induced $M_Y$-action on  $\ms{L}_i|_{e_0}$ is trivial. Finally, the equivariant structure with the desired property is unique since the character of $M_Y$ uniquely extends to a character of $\mc{P}_Y$.
\end{proof}

Let $G^\tau{}'$ denote the simply connected cover of $G^\tau$ and set $G(\mc{O})^\tau{}':=G(\mc{O})^\tau\times_{G^\tau} G^\tau{}'$. Let $T^\tau{}'$ be the preimage of $T^\tau$ under the map $ G^\tau{}'\to G^\tau$. Note that when $G$ is simply-connected, $G^\tau$ is connected but not necessarily simply-connected. Let $\mb{G}_m^{\mr{rot}}$ be the loop rotation torus acting on $G(\mc{K})^\tau$. 
By \cite{Faltings03,PR08},  the full affine flag vareity $\Fl$ of $G(\mc{K})^\tau$
can be identified with the associated Kac-Moody flag variety.  As a consequence, the central extension of $G(\mc{K})^\tau\rtimes  \mb{G}_m^{\mr{rot}}$
$$1\to \mb{G}_m \to \tilde{L}(G,\tau)  \to G(\mc{K})^\tau \rtimes \mb{G}_m^{\mr{rot}}  \to 1 $$
associated to any dominant line bundle of central charge 1 on $\Fl$,  is isomorphic to the affine Kac-Moody group of type $X_N^{(r)}$ in the sense of Kumar and Mathieu. Then, $\tilde{L}(G,\tau)$ acts on any line bundle $\ms{L}$ on $\Fl_Y$.

Let $\tilde{T}$ be the preimage of $T^\tau \times \mb{G}_m^{\mr{rot}}$ in $\tilde{L}(G,\tau)$. This induces an exact sequence, 
\[ \textstyle 0\to X^*(T^\tau)\oplus X^*(\mb{G}_m^{\mr{rot}}) \to X^*(\tilde{T})  \xrightarrow{c} X^*(\mb{G}_m)\simeq \mb{Z}\to 0 , \]
where $c$ is the central charge map.
According to our convention in Section \ref{Notations}, the fundamental weight $\Lambda_o\in  X^*(\tilde{T})$ has central charge $\check{a}_o$, where $\check{a}_o=2$ if $X_N^{(r)}= A_{2\ell}^{(2)}$; otherwise $\check{a}_o=1$. Via the line bundle $\ms{L}(\Lambda_0)$, there is a natural embedding ${T^\tau}'\times \mb{G}^{\rm rot}\hookrightarrow \tilde{T}$, such that composing with $\tilde{T}\to T^\tau\times \mb{G}^{\rm rot}_{m}$ is the natural morphism ${T^\tau}'\times \mb{G}^{\rm rot}_{m}\to {T^\tau}\times \mb{G}^{\rm rot}_{m}$.
This induces the following embedding
$$\textstyle X^*(\tilde{T})  \hookrightarrow \frac{1}{\check{a}_0} \mb{Z}\Lambda_o \oplus X^*(T^\tau{}')\oplus \mb{Z} \delta\subseteq (\hat{\mf{h}}^\tau_\mb{Q})^*,$$ 
where we identify $\delta$ with the canonical generator of $ X^*(\mb{G}_m^{\mr{rot}})$.  Note that ${T^\tau}'=T^\tau$ when $\check{a}_o=1$, otherwise ${T^\tau}'\to T^\tau$ is a double cover. 
\begin{lem}\label{4237984}
Let $\ms{L}(\Lambda)$ be a line bundle on $\Fl_Y$ associated to $\Lambda=\sum_{i\in  Y} n_i \Lambda_i$ of central charge $c$. Let $\lambda$ be the restriction of $\Lambda$ to $T^\tau{}'$. Then, the induced $\tilde{T}$-action on $\ms{L}(\Lambda)|_{\dot{w}e_0}$ is given by the weight $-w(\Lambda)$, where $w\in W_{\mr{aff}}$ and $\dot{w}$ is a lifting of $w$ in $G(\mc{K})^\tau$. As a consequence, the induced $T^\tau{}'$-action on $\ms{L}(\Lambda)|_{\dot{w}e_0}$ is given by the weight $-w(\Lambda)|_{T^\tau{}'}$.  
\end{lem}
\begin{proof}
Recall that $\Fl_G$ is a Kac-Moody flag variety of $\tilde{L}(G,\tau)$ and $W_{\mr{aff}}$ is the Weyl group associated to $\tilde{L}(G,\tau)$. Thus, we have that the $\tilde{T}$-action on $\ms{L}(\Lambda)|_{e_0}$ is given by the weight $-\Lambda$; generally at the point $\dot{w}e_0$ for any $w\in W_{\mr{aff}}$,  $\tilde{T}$ acts on $\ms{L}(\Lambda)|_{\dot{w}e_0}$ via the weight $-w(\Lambda)$, where the action $w$ on $\Lambda$ is given in \eqref{Weyl_action}. 
\end{proof}

Recall Lemma \ref{prop_goodprime}, when $p\nmid a_{Y}$, we can choose an interior point $\vartheta$ in the facet associated to $Y$ so that $p\nmid m$. Define a subgroup $G_\vartheta$ of $G^{\tau}$ as in \eqref{eq_Glambda}, which is a subgroup of $M_Y$. Let $G'_\vartheta$ be the pre-image of $G_\vartheta$ via the cover $G^{\tau}{}'\to G^{\tau}$. Recall Corollary \ref{cor_glambdamy}, $G_\vartheta$ is a subgroup of $M_Y$. Thus, we have a natural morphism $G'_\vartheta\to M_Y$. 

By Proposition \ref{thm_lgequivarian}, the line bundle $\ms{L}(\Lambda)$ has a unique $\mc{P}_Y$-equivariant structure with $\mc{P}_Y$ acting on $\ms{L}(\Lambda)|_{e_0}$ trivially. In the following proposition, we compare it with the unique $G(\mc{O})^\tau{}'$-equivariant structure.

\begin{prop}\label{thm_Gtau}
With the same assumptions in Lemma \ref{4237984}, the line bundle $\ms{L}(\Lambda)$ has a unique $G(\mc{O})^\tau{}'$-equivariant structure. Moreover, the $G'_\vartheta$-actions on $\ms{L}(\Lambda)$ induced from the equivariant structures of $G(\mc{O})^\tau{}'$ and $\mc{P}_Y$ differ by a weight $\lambda$.
\end{prop}
\begin{proof}
Similar to the argument in Proposition \ref{thm_lgequivarian}, we consider any $G(\mc{O})^\tau$-Schubert variety $X_w$ in $\Fl_Y$. Then, the action of $G(\mc{O})^\tau$ factors through a group $G(\mc{O}_n)^\tau$ when $n$ is large, where $\mc{O}_n=\rk[z]/(z^n)$. Let $G(\mc{O}_n)^\tau{}':=G(\mc{O}_n)^\tau\times_{G^\tau} G^\tau{}'$. Note that $G(\mc{O}_n)^\tau{}'$ is the semiproduct of $G^\tau{}'$ with the kernel $J_n$ of the projection $G(\mc{O}_n)^\tau{}'\to G^\tau{}'$.  In view of \cite[Theorem1]{Popov}, $\mr{Pic}(G(\mc{O}_n)^\tau{}')=\mr{Pic}(G^\tau{}')=1$. Invoking \cite[Theorem1]{Popov} again, any $\mb{G}_{\rm m}$-central extension of $G(\mc{O}_n)^\tau{}'$ is trivial.   Hence, the restriction of the line bundle $\ms{L}_i$ on $X_w$ has a unique $G(\mc{O}_n)^\tau{}'$-equivariant structure, thus a unique $G(\mc{O})^\tau{}'$-equivariant structure.  Taking limit of $X_w$, we get a unique $G(\mc{O})^\tau{}'$-equivariant structure on $\ms{L}(\Lambda)$ over $\Fl_Y$. This proves the first statement.

In view of the first part, we have a natural embedding $G(\mc{O})^\tau{}'\to \tilde{L}(G,\tau)$. By Lemma \ref{4237984}, the torus $T^\tau{}'$ acts on $\ms{L}(\Lambda)|_{e_0}$ via the weight $-\lambda$. On the other hand, the $\mc{P}_Y$-equivariant structure on $\ms{L}(\Lambda)$ induces a $G'_\vartheta$-action on $\ms{L}(\Lambda)$ via the natural morphism $G'_\vartheta\to M_Y$ described above. Under this second equivariant structure, $T^\tau{}'$ acts on $\ms{L}(\Lambda)|_{e_0}$ via the weight $0$. Therefore, these two $G'_\vartheta$-actions differ by a weight $\lambda$.
\end{proof}

\subsection{Equivariance of line bundles on ramified global affine Grassmannians}\label{43287437}
Now we are in the same setting as in Section \ref{Section_main}. We can fix an interior point $\vartheta$ in the facet $F_Y$ of $\textsf{A}$ associated to $Y\subseteq\hat{I}_\tau$ so that $p\nmid m$, where $m$ is the integer defined in Lemma \ref{prop_goodprime}. Define parahoric group schemes $\ms{G}_\vartheta$ over $\bar{\mc{O}}$ and $\ms{G}_{-\vartheta,\infty}$ over $\bar{\mc{O}}_\infty$ as in Section \ref{sec_parahoric}. Let $\A^1=\mr{Spec}(\rk[z])$ be the affine line with a $\tau$-action given by $\tau(z)=\epsilon^{-1}z$. Let $C:=\A^1\cup \{\infty\}$ be the projective line. Let $\bar\A^1$ and $\bar{C}$ be the quotient of $\A^1$ and $C$ by the $\tau$-action. 

We can naturally glue $\ms{G}_\vartheta$, $\mr{Res}_{\A^1-0/\bar{\A}^1-0}(G_{\A^1-0})^{\tau}$, and $\ms{G}_{-\vartheta,\infty}$, and get a parahoric Bruhat-tits group scheme $\mc{G}$ over $\bar{C}$. By construction, it satisfies the following property:
\begin{equation}\label{6892374}
\G|_{\mb{D}_{a}}\simeq
\begin{cases}
G_{\mb{D}_{a}} & \text{if } a\neq 0,\infty\\
\ms{G}_\vartheta & \text{if } a= 0\\
\ms{G}_{-\vartheta} & \text{ if } a=\infty
\end{cases},  
\end{equation}
where $\mb{D}_{a}$ denotes the formal disk around $a\in \bar{C}$. 

Similar to the local description in Theorem \ref{paraohoric_thm}, we shall give another description of the parahoric Bruhat-tits group scheme from the equivariant point of view. Let $\tilde{\A}^1=\mr{Spec}(\rk[t])$ be the affine line with a $\sigma$-action given by $\sigma(t):=(\epsilon_m)^{-1}t$, where $\epsilon_m$ is a fixed primitive $m$-th root of unity. Let $\tilde{C}:=\tilde{\A}^1\cup \{\infty\}$ be the projective line. Let $\tilde{C}\to C$ be the map given by $a\mapsto a^{m/r}$ for any $a\in \tilde{C}$. Then, $\bar{C}=C/\langle \tau \rangle=\tilde{C}/\langle \sigma \rangle$. 

Define the following  group scheme over $\bar{C}$,
$$\mc{G}_{\sigma}:=\mr{Res}_{\tilde{C}/\bar{C}} (G\times \tilde{C})^{\sigma,\circ},$$
where $\sigma$ acts on $\tilde{C}$ and $\sigma$ acts on $G$ defined in (\ref{eq_sigma}).
Clearly, we have $\mc{G}_{\sigma}|_{\bar{\mb{D}}_{0}}\simeq \ms{G}_\sigma$ and $\mc{G}_{\sigma}|_{\bar{\mb{D}}_\infty}\simeq\ms{G}_{\sigma,\infty}$, where  $\ms{G}_\sigma$ and $\ms{G}_{\sigma, \infty}$ are defined in Section \ref{sec_parahoric}, and $\bar{\mb{D}}_{0}=\mr{Spec}(\bar{\mc{O}})$  denotes the formal disks around $0\in \bar{C}$. 

\begin{lem}
\label{lem_glob}
The map $\mr{Ad}_{z^\vartheta}$ induces an isomorphism of group schemes over $\bar{\A}^1-0$,
\begin{equation}\label{24502384}
\phi_{\bar{\A}^1-0}: \mr{Res}_{\A^1-0/\bar{\A}^1-0}(G_{\A^1-0})^\tau\simeq \mr{Res}_{\tilde{\A}^1-0/\bar{\A}^1-0}(G_{\tilde{\A}^1-0})^\sigma.
\end{equation}
\end{lem}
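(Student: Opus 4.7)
The plan is to exhibit $\phi_{\bar{\A}^1-0}$ directly as conjugation by $z^\vartheta = t^h$, where $h = \frac{m}{r}\vartheta \in X_*(T_{\mr{ad}})^\tau$, and to verify its compatibility with the two Galois-type actions by a direct computation. On functor of points, for a $\bar{\A}^1-0$-algebra $B$, set $R = \rk[t,t^{-1}]$, $R' = \rk[z,z^{-1}]$, $R'' = \rk[z^r,z^{-r}] = \rk[t^m,t^{-m}]$. The left-hand side consists of $g \in G(B \otimes_{R''} R')$ fixed by the combined action of $\tau$ on $G$ and on $R'$, and the right-hand side consists of $\tilde g \in G(B \otimes_{R''} R)$ fixed by the analogous $\sigma$-action. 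Since $h$ is integral for $T_{\mr{ad}}$, the element $t^h$ lies in $T_{\mr{ad}}(R)$, so the inner automorphism $\mr{Ad}_{t^h}$ of $G_R$ is well-defined; we set $\phi_{\bar{\A}^1-0}(g) := t^h g t^{-h}$ after extending scalars from $R'$ to $R$.

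The key computation invokes three ingredients: the identity $\sigma = \mr{Ad}_{\epsilon_m^h} \circ \tau$ of automorphisms of $G$ (immediate from \eqref{eq_sigma} since $\tau$ fixes $h$); the formula $\sigma(t^h) = \epsilon_m^{-h} t^h$ on $R$; and the fact that $t^h$ and $\epsilon_m^h$ both lie in the abelian torus $T_{\mr{ad}}$ and hence commute. Threading these together, applying the combined $\sigma$-action to $t^h g t^{-h}$ yields $t^h \cdot (\text{combined $\tau$-action on $g$}) \cdot t^{-h}$, which equals $t^h g t^{-h}$ by the assumed $\tau$-invariance of $g$; the two scalar factors $\epsilon_m^{\pm h}$ cancel. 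Hence $\phi_{\bar{\A}^1-0}(g)$ lies in the right-hand side. For the inverse map $\tilde g \mapsto t^{-h} \tilde g t^h$, one must further verify that the result descends to $G(B \otimes_{R''} R')$, equivalently that it is $\sigma^r$-invariant on the base. Here the extra factor $\mr{Ad}_{\epsilon_m^{m\vartheta}}$ that arises from $\sigma^r$ on $G$---nontrivial in general, even though $\sigma^m = \mr{id}$---precisely cancels the scalar $\epsilon_m^{\mp m\vartheta}$ produced by $\sigma^r$ acting on $t^{\pm h}$, using once more the abelianness of $T_{\mr{ad}}$. The $\tau$-invariance of $t^{-h} \tilde g t^h$ follows from a parallel calculation.

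The main obstacle is purely bookkeeping: keeping straight the two distinct automorphisms of $G$ (namely $\tau$ versus $\sigma$) in parallel with the ring actions on $R'$ and $R$, and reconciling the nontrivial $\sigma^r|_G$ with the nontrivial $\sigma^r|_R$. No essentially new geometric input is needed beyond the commutativity of $T_{\mr{ad}}$ and the basic relation $\sigma = \tau \circ \mr{Ad}_{\epsilon_m^h}$. The substantive local/ramified content is already handled by Theorem \ref{paraohoric_thm}, and compatibility of the global $\phi_{\bar{\A}^1-0}$ with the local isomorphism $\phi$ on the punctured formal disk at $0$ is automatic, since both are restrictions of the same conjugation formula $\mr{Ad}_{z^\vartheta}$.
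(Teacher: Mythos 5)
Your proof is correct and follows essentially the same route as the paper, which simply defers to the direct conjugation computation of \cite[Proposition 4.1.1]{DH}: conjugation by $z^\vartheta=t^h$ with $h\in X_*(T_{\mr{ad}})^\tau$ intertwines the combined $\tau$- and $\sigma$-actions because the scalar $\epsilon_m^{\pm h}$ coming from $\sigma(t^{\pm h})$ cancels against $\mr{Ad}_{\epsilon_m^{h}}$ in $\sigma=\tau\circ\mr{Ad}_{\epsilon_m^{h}}$, and the $\sigma^r$-descent check for the inverse is exactly the cancellation you describe. Your write-up just makes explicit the bookkeeping that the paper leaves to the cited reference.
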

\begin{proof}
The proof is identical to the proof of \cite[Proposition 4.1.1]{DH}.
\end{proof}
In view of Theorem \ref{paraohoric_thm} and Lemma \ref{lem_glob}, the isomorphisms $\phi_0$, $\phi_\infty$ (defined in Section \ref{sec_parahoric}), and $\phi_{\bar{\A}^1-0}$ can be glued to an isomorphism of group schemes over $\bar{C}$,
\begin{equation}\label{8768769}
\phi: \G\simeq \G_{\sigma}.
\end{equation} 

\begin{definition}\label{def_grass}
The global affine Grassmannian $\Gr_{\G}$, the jet group scheme $L^+\G$, and the negative loop group scheme $L^-\G$ over $\bar{C}$ are defined as follows, for any $\rk$-algebra $R$,
\[\Gr_{\G}(R):=\left\{(a,\mc{F},\beta)\,\middle|\,a\in \bar{C}(R), \mc{F} \text{ a } \G\text{-torsor over } \bar{C}_R, \beta:\mc{F}|_{\bar{C}_R\setminus\Delta_{a}}\simeq \mathring{\mc{F}}|_{\bar{C}_R\setminus\Delta_{a}}\right\}\]
and 
\begin{align}
&L^+\G(R):=\left\{ (a,\eta) \,\middle|\,a\in \bar{C}(R), \eta \in \mc{G}(\hat\Delta_{a}) \right\}, \nonumber \\
&L^-\G(R):=\left\{ (a,\eta) \,\middle|\,a\in \bar{C}(R), \eta \in \mc{G}(\bar{C}_R\setminus\Delta_{a} ),\right\}\label{eq_global_jet}
\end{align}
where $\mathring{\mc{F}}$ is the trivial $\G$-torsor on $\bar{C}_R$, and $\hat{\Delta}_{a}$ is the completion of $\bar{C}_R$ along the graph $\Delta_{a}$ of $a: {\rm Spec} (R)\to \bar{C}$.

For any $a\in \bar{C}$, let $\Gr_{\G,a}$ and $L^+\G_{a}$ denote the fiber of $\Gr_{\G}$ and $L^+\G$ at $a$ respectively. Then, $\Gr_{\mc{G}}$ is an ind-scheme of fintie type over $\bar{C}$, $L^+\mc{G}$ is a group scheme over $\bar{C}$, and $L^-(\mc{G})$ is an ind-group scheme of finite type over $\bar{C}$, cf.\,\cite[Proposition 2, Lemma 20]{He}. 
\end{definition}
The global Grassmannian $\Gr_{\G}$ over $\bar{C}$ has the following property, 
\[\Gr_{\G,a}\simeq 
\begin{cases}
\Fl_Y & \text{when }a=0\\
\Gr_{G} & \text{when }a\neq 0,\infty.
\end{cases}\]

Let $\mr{Bun}_{\G}$ be the moduli stack of $\G$-torsors on $\bar{C}$, i.e. for any $\rk$-algebra $R$, $\mr{Bun}_{\G}(R)$ is the groupoid of $\mc{G}$-torsors over $\bar{C}_R$. By \cite[Proposition 1]{He}, $\mr{Bun}_{\G}$ is a smooth algebraic stack locally of fintie type. 
\begin{prop}\label{45393240}
There is an isomorphism of algebraic stacks $$\mr{Bun}_{\G}\simeq [\mc{P}_Y^{-}\backslash \Fl_Y].$$
When $G$ is simply-connected,  the line bundle $\ms{L}(\Lambda)$ descends to a line bundle $\mc{L}_{\mr{Bun}_{\G}}$ on $\mr{Bun}_{\G}$.
\end{prop} 
\begin{proof}
By Proposition \ref{prop_3284932}, we have $L^-\G_{0}\simeq \mc{P}_Y^{-}$. By uniformization theorem (cf.\,\cite[Theorem 5]{He}), we have $\mr{Bun}_{\G}\simeq [\mc{P}_Y^{-}\backslash \Fl_Y]$.
The second part of the statement follows from the $\mc{P}_Y^{-}$-equivariance of the line bundle $\ms{L}(\Lambda)$ on $\Fl_Y$ constructed in Proposition \ref{thm_lgequivarian}.
\end{proof}
\begin{remark}
For general parahoric Bruhat-Tits group scheme $\mc{G}$ over a smooth projective curve, line bundles on ${\rm Bun}_{\mc{G}}$ have been further investigated in \cite{DH2} by Damiolini and the first author. 
\end{remark}

We denote by $\mc{L}:=\mr{pr}^*(\mc{L}_{\mr{Bun}_{\G}})$ the pullback of $\mc{L}_{\mr{Bun}_{\G}}$ via the natural projection $\mr{pr}:\Gr_{\G}\to \mr{Bun}_{\G}$. For each $a\in \bar{C}$, let $\mc{L}_a$ denote the restriction of $\mc{L}$ to the fiber $\Gr_{\G,a}$ of $\Gr_{\G}$ at $a\in \bar{C}$. 

\begin{prop}\label{324984}
Assume that $G$ is simply-connected. We have an isomorphism $\mc{L}_0\simeq\ms{L}(\Lambda)$ of line bundles on $\Gr_{\G,0}\simeq \Fl_Y$. Moreover, for any $a\neq 0,\infty$ in $\bar{C}$, we have an isomorphism $\mc{L}_{a}\simeq L^c$ of line bundles on $\Gr_{\G,a}\simeq \Gr_G$, where $L$ denotes the level one line bundle on $\Gr_G$. 
\end{prop}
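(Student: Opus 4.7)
The first statement is essentially tautological given how $\mc{L}$ was constructed. By Proposition~\ref{45393240}, $\mathrm{Bun}_{\mc{G}}\simeq [\mc{P}_Y^{-}\backslash \Fl_Y]$ and the line bundle $\mc{L}_{\mathrm{Bun}_{\mc{G}}}$ is, by definition, the descent of the $\mc{P}_Y^{-}$-equivariant bundle $\ms{L}(\Lambda)$ along this uniformization quotient. Under the canonical identification $\Gr_{\G,0}\simeq \Fl_Y$ arising from the trivialization of $\G$-torsors on $\bar{C}\setminus\{0\}$, the restriction of $\mathrm{pr}\colon \Gr_{\G}\to \mathrm{Bun}_{\G}$ to the central fiber coincides with the quotient map $\Fl_Y\twoheadrightarrow [\mc{P}_Y^{-}\backslash \Fl_Y]$. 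Pulling back the descent therefore returns $\mc{L}_0\simeq \ms{L}(\Lambda)$ by construction.

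For the second statement, fix $a\in \bar{C}\setminus\{0,\infty\}$. From the identification $\G|_{\mb{D}_{a}}\simeq G_{\mb{D}_{a}}$ recorded in \eqref{6892374}, the fiber $\Gr_{\G,a}$ is canonically $\Gr_G$, so $\mc{L}_a$ is naturally a line bundle on $\Gr_G$. Since $\mathrm{Pic}(\Gr_G)\simeq \mb{Z}\cdot L$, we may write $\mc{L}_a\simeq L^{c'}$ for a unique integer $c'$, the central charge of $\mc{L}_a$. The whole task thus reduces to verifying $c'=c$.

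My plan for this matching is to exploit the flatness of $\Gr_{\G}$ over $\bar{C}$ established by Zhu \cite{Zhu14}. I would pick a coweight $\mu$ so that a distinguished Schubert curve assembles into a flat $\mb{P}^1$-family over an open of $\bar{C}$ containing both $0$ and $a$ inside the global Schubert variety $\overline{\Gr}_{\G,\mu}$. The degree of $\mc{L}$ on each member of this family is then locally constant on $\bar{C}$; at $0$ it is read off from the coefficients of $\Lambda$ against simple affine roots via the Picard formula \eqref{5324798} on $\Fl_Y$, while at $a$ the same degree computation pins down the coefficient of $c'L$ on $\Gr_G$. Choosing enough such curves to determine the central charge on both fibers forces $c'=c=\Lambda(K)$.

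The main obstacle is in this last step: producing a collection of global Schubert curves whose specializations on the $0$-fiber and on a generic fiber transparently and simultaneously encode the central charge. If this becomes unwieldy, the cleaner alternative is to invoke the classification of line bundles on $\Gr_{\G}$ of Section~\ref{4924695}, according to which a line bundle on $\Gr_{\G}$ is determined by its restriction at $0$; this rigidity automatically propagates the central charge from the special fiber to the generic one and yields $c'=c$ with no further work.
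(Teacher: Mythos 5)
Your treatment of the first statement matches the paper's: $\mc{L}_0\simeq\ms{L}(\Lambda)$ is immediate from the construction of $\mc{L}$ as the pullback of the descent of the $\mc{P}_Y^-$-equivariant bundle $\ms{L}(\Lambda)$ along $\mr{Bun}_\G\simeq[\mc{P}_Y^-\backslash\Fl_Y]$, and this is all the paper says too.

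For the second statement there is a genuine gap. The paper's entire argument is a citation of \cite[Proposition 4.1]{Zhu14}, which asserts that the central charge of $\mc{L}_a$ is constant in $a\in\bar{C}$; combined with $\mr{Pic}(\Gr_G)\simeq\mb{Z}$ this forces $\mc{L}_a\simeq L^c$. That constancy \emph{across the ramified special fiber at $0$} is exactly the point your proposal does not secure. Your primary plan (flat $\mb{P}^1$-families of Schubert curves in a global Schubert variety, with degrees read against $\Lambda$ at $0$ and against $L^{c'}$ at $a$) is in spirit a re-proof of Zhu's Proposition 4.1, but you leave it as a sketch and yourself flag the construction of enough such curves as the main obstacle; as written it is not a proof. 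The fallback is worse on two counts: Proposition \ref{416947} appears later in the paper and its injectivity argument itself invokes \cite[Proposition 4.1]{Zhu14} for the constancy of the central charge, so appealing to it here is circular; and even granting it, the statement that $\mc{L}$ is determined by its restriction $\mc{L}_0$ does not by itself compute the central charge of $\mc{L}_a$ --- rigidity says nothing about what the generic fiber is, only that it is determined. The clean fix is simply to cite \cite[Proposition 4.1]{Zhu14}, or else to actually carry out the curve-degree computation you outline.
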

\begin{proof}
The first statement follows from the construction of $\mc{L}$. For the second statement, applying \cite[Proposition 4.1]{Zhu14}, the central charge of the restriction $\mc{L}_a$ is constant, hence is equal to $c$. Since the central charge map $\mr{Pic}(\Gr_G)\to \mb{Z}$ is an isomorphism, we conclude that $\mc{L}_{a}\simeq L^c$.
\end{proof}

\begin{thm}\label{431424}
Assume that $G$ is simply-connected. The line bundle $\mc{L}$ over $\Gr_{\G}$ has an $L^-\G$-equivariant structure such that the induced $\mc{P}^-_Y$-equivariant structure on $\mc{L}_0$ is the same as the one constructed in Proposition \ref{thm_lgequivarian}, i.e.\,the induced $M_Y$-action on $\mc{L}_0|_{e_0}$ is trivial. 
\end{thm}
\begin{proof}
Note that $[L^-\G \backslash\Gr_{\G}]\simeq \bar{C}\times 
\mr{Bun}_{\G}$ given by $(a,\mc{F}, \beta)\mapsto (a, \mc{F})$. The pullback $\mc{L}=\mr{pr}^*(\mc{L}_{\mr{Bun}_{\G}})$ is naturally equipped with a $L^-\G$-equivariant structure. Restricting to the fiber at $0\in \bar{C}$, $M_Y\subseteq \mc{P}_Y^-\simeq L^-\G_{0}$ acts on $(\mc{L}_a)_{e_0}$ trivially. Thus, the induced $\mc{P}^-_Y$-equivariant structure on $\mc{L}_0$ agrees with the one constructed in Proposition \ref{thm_lgequivarian}.
\end{proof} 

In the following paragraph, we shall define the global Schubert varieties. In order to do this, we need consider the base change of $\Gr_{\G}$ to $C$, i.e. 
$$\Gr_{\G,C}:=\Gr_{\G}\times_{\bar{C}}C.$$
For any $a\in C$, we denote by $\Gr_{\G,a}$ the schematic fiber of $\Gr_{\G,C}$ at $a$. Similarly, we set $L\G_C:=L\G\times_{\bar{C}} C$ and $L^+\G_C:=L^+\G\times_{\bar{C}} C$.  

For any dominant coweight $\mu\in X_*(T)$, let $s_{\mu}:C\to L\G_C$  be the section constructed in \cite[Section 3.3]{Zhu14}, satisfying the following property:
\begin{equation}
\label{eq_global_sect}
s_\mu(a)=\begin{cases}
(z-a)^\mu,   \quad a\in C-0\\
n^{\mu}\in T(\mc{K})^\tau,   \quad  a=0
\end{cases},
\end{equation}
where $n^\mu$ is defined in \eqref{462918}. This induces a section $C\to \Gr_{\G,C}$, which is still denoted by $s_\mu$ if there is no confusion. For any $a\in C-0$, $s_\mu(a)=e_{\mu}\in \Gr_G$ under the identification $\Gr_{\G,a}\simeq \Gr_G$; and $s_\mu(0)=e_{\bar{\mu}}\in \Gr_{\G,0}\simeq \Fl_Y$.

\begin{definition}
\label{def_global_schubert}
For any dominant coweight $\mu\in X_*(T)$,  we define the global Schubert variety $\Grb_{\G,C}^\mu$ to be the minimal $L^+\G_C$-stable irreducible closed subvariety of $\Gr_{\G,C}$ that contains $s_\mu$. 
\end{definition}
The following theorem is due to Zhu, cf.\,\cite[Theorem 3]{Zhu14}.
\begin{thm}\label{532953}
The global Schubert variety $\Grb^\mu_{\G,C}$ is flat over $C$, and for any $a\in C$, we have
\begin{equation}\label{eq_fiber}
\Grb^\mu_{\G,a}\simeq
\begin{cases}
\mc{A}_Y(\bar{\mu}) & \text{ if } a=0\\
\Grb_{G,\mu} & \text{ if } a\neq 0, \infty
\end{cases}.
\end{equation}
\end{thm}

Note that $\Gr_{\G,\A^1-0}\simeq \Gr_G\times (\A^1-0)$, whose connected components are parametrized by $X_*(T)/\check{Q}$. Let $M$ denote the set consisting of $0$ and miniscule dominant coweights of $G$. There is a natural bijection $M\simeq X_*(T)/\check{Q}$. For any $\xi\in M$,  let $\Gr_{\G,\A^1-0}[\xi]$ denote the component containing $e_{-\xi}\times (\A^1-0)$, where $e_{-\xi}\in \Gr_G$. Let $\Gr_{\G,C}[\xi]$ be the closure of $\Gr_{\G,\A^1-0}[\xi]$  in $\Gr_{\G,C}$. When $\xi=0$, we call $\Gr_{\G,C}[\xi]$ the neutral component and denote it by $\Gr_{\G,C}[o]$. The component $\Gr_{\G,C}[\xi]$ contains the global Schubert variety $\Grb_{\G,C}^\mu$ if and only if $\xi+\mu\in \check{Q}$.

\subsection{Proof of Theorem \ref{thm_mainthm}}\label{652935}
We first assume $G$ is simply-connected. Suppose $p\nmid a_Y$. Then, we can fix an interior point $\vartheta$ in the facet $F_Y$ of $\textsf{A}$ associated to $Y\subseteq\hat{I}_\tau$ so that $p\nmid m$. As in equation \eqref{eq_Glambda}, we associate a subgroup $G_\vartheta$ of $G^\tau$.
Recall the $\mc{P}_Y$-equivariant structure of the line bundle $\ms{L}(\Lambda)$ on $\Fl_Y$ and the $G(\mc{O})$-equivariant structure of line bundle $L$ on $\Gr_{G}$, cf.\,Proposition \ref{thm_Gtau} and \cite[Theorem 7]{Faltings03}. Via the embeddings $G_\vartheta\to \mc{P}_Y$ and $G_\vartheta\to G(\mc{O})$, they induce $G_\vartheta$-equivariant structures on line bundles $\ms{L}(\Lambda)$ and $L$ with $G_\vartheta$ acting on $\ms{L}|_{e_0}$ and $L|_{e_0}$ trivially.
\begin{prop}\label{equivariant}
Assume that $G$ is simply-connected. The line bundle $\mc{L}$ over $\Gr_{\G}$ (constructed in Section \ref{43287437}) has an $G_\vartheta\times \bar{C}$-equivariant structure such that the induced $G_\vartheta$-equivariant structure on $\mc{L}_a$ is the same as the $G_\vartheta$-equivariant structure on $\ms{L}(\Lambda)$ (when $a=0,\infty$) or $L^c$ (when $a\neq 0,\infty$) defined above.
\end{prop}
\begin{proof}
For the parahoric Bruhat-Tits group scheme $\mc{G}_\sigma$, we can similarly define the ind-group scheme $L^-\G_\sigma$ over $\bar{C}$ as in \eqref{eq_global_jet}.  Note that, for any commutative ring $R$ over $\rk$, $\G_\sigma(\bar{C}_R)$ can be naturally identified with the set of  $ \sigma$-equivariant morphisms from $ C_R\setminus \cup_{i=0}^{m-1}\Delta_{\sigma^i\tilde{a}}$  to $G$, where $\tilde{a}$ is a preimage of $\bar{a}$ under the map $\tilde{C}(R)\to \bar{C}(R)$ and $\Delta_{\sigma^i\tilde{a}}$ is the graph of $\sigma^i\tilde{a}$ in $C_{R}$. There is a natural embedding $G^\sigma\hookrightarrow \G_\sigma(\bar{C}_R\setminus \Delta_{\bar{a}})$ giving by sending $g\in G^\sigma$ to the constant morphism landing at $g$. This induces an embedding $G^\sigma \times \bar{C}\hookrightarrow L^-\G_\sigma$ of group schemes over $\bar{C}$.
Moreover, the map $\phi$ in \eqref{8768769} induces an isomorphism  $L^-\G \simeq L^-\G_\sigma$. Hence, we have the following composition of morphisms 
\begin{equation}\label{45329856}
G_\vartheta\times \bar{C} \subseteq G^\sigma \times \bar{C}\hookrightarrow L^-\G_\sigma\simeq L^-\G.
\end{equation}
By Theorem \ref{431424}, this induces an $G_\vartheta\times \bar{C}$-equivariant structure on $\mc{L}$.

Restricting to the fibers at $0\in \bar{C}$, the composition map \eqref{45329856} is given by  
\begin{equation}
\label{eq_G_34}
G_\vartheta \subseteq G^\sigma \hookrightarrow G[t^{-1}]\xrightarrow{\mr{Ad}_{t^{-h}}} \mc{P}_Y^-,\end{equation}
where the isomophism $\mr{Ad}_{t^{-h}}$ is the restriction of $\mr{Ad}_{t^{-h}}: G((t^{-1}))^\sigma\simeq G((z^{-1}))^\tau$, see Proposition \ref{prop_3284932}.
Since $\mr{Ad}_{t^{-h}}$ restricts to the identity map on $G_\vartheta$, the composition map in \eqref{eq_G_34} is exactly the natural embedding $G_\vartheta \subseteq \mc{P}_Y^-$. 

We now consider restricting the composition map \eqref{45329856} to the fibers at $\bar{a}\in \bar{C}\setminus \{0,\infty\}$. Let $a$ be a lifting of $\bar{a}$ in $C$, and let $\tilde{a}$ be a lifting of $a$ in $\tilde{C}$. Then, the restriction of the composition map \eqref{45329856} at $\bar{a}$ given by  
$$G_\vartheta \subseteq G^\sigma \hookrightarrow G[t_{\tilde{a}}^{-1}]\xrightarrow{\mr{Ad}_{t^{-h}}} G[z_a^{-1}],$$ 
where the isomorphism $\mr{Ad}_{t^{-h}}$ is the restriction of $\mr{Ad}_{t^{-h}}: G((t))\simeq G((z))$, and $t_{\tilde{a}}=t-\tilde{a}, z_a=z-a$. By the definition of $G_\vartheta$ in (\ref{eq_Glambda}), $\mr{Ad}_{t^{-h}}$ restricts to the identity map on $G_\vartheta$. Thus, this composition is the natural embedding $G_\vartheta \subseteq G[z_a^{-1}]$. 

In summary, the natural $L^-\G$-equivariant structure on $\mr{pr}^*(\mc{L}_{\mr{Bun}_{\G}})$ induces a $G_\vartheta\times \bar{C}$-equivariant structure on it via the embedding $G_\vartheta\times \bar{C}\hookrightarrow L^-\G$ in \eqref{45329856}. Restricting to the fiber at $0\in \bar{C}$, the induced $G_\vartheta$-equivariant structure on $\mc{L}_0$ is the same as the one induced from the $\mc{P}_Y^-$-equivariant structure constructed in Proposition \ref{thm_lgequivarian} via the embedding $G_\vartheta\subseteq \mc{P}_Y^-$. In view of Proposition \ref{thm_lgequivarian}, this is the same as the one induced from $\mc{P}_Y$-equivariant structure via the embedding $G_\vartheta\subseteq \mc{P}_Y$. Moreover, restricting to the fiber at $\bar{a}\in \bar{C}\setminus \{0\}$, the induced $G_\vartheta$-equivariant structure on $\mc{L}_{\bar{a}}$ is the same as the one induced from $G[z_a^{-1}]$-equivariant structure via the embedding $G_\vartheta\subseteq G[z_a^{-1}]$. In view of \cite[Theorem 7]{Faltings03}, this is the same as the one induced from $G[[z_a]]$-equivariant structure via the embedding $G_\vartheta\subseteq G[[z_a]]$.
\end{proof}

From now on, we are under the assumptions in Theorem \ref{431424}, i.e.\,$G$ is a general simple algebraic group, $\mu\in X_*(T)^+$ is a dominant coweight, and  $\Fl_Y^\kappa$ is the component of $\Fl_Y$ containing $\overline{\Fl}_{Y,\bar\mu}$. We are also given an ample line bundle $\ms{L}(\Lambda)_\kappa$ on  $\Fl^\kappa_Y$. By assumption $p\nmid a_{Y}$, we can choose an interior point $\vartheta$ in the facet associated to $Y$ so that $p\nmid m$. Recall that $\mc{G}$ is the associated parahoric Bruhat-Tits group scheme over $\bar{C}$, and $\Gr_{\G,C}$ is the  global affine Grassmannian of $\mc{G}$ base changed to $C$. 


Let $G'$ be the simply-connected cover of $G$. Let $\G'$ be the corresponding Bruhat-Tits parahoric group scheme over $\bar{C}$ associated to $G'$ and $\tau$. The cover $G'\to G$ induces a natural morphism $\G'\to \G$ of group schemes over $\bar{C}$. By a similar proof as in \cite[Lemma 5.16]{HY}, there is an isomorphism $\Gr_{\G',C}\simeq \Gr_{\G,C}[o]$. Let $\ms{L}(\Lambda)$ be the line bundle on $\Fl'_Y$ associated to $\Lambda$. By Theorem \ref{45393240}, the line bundle $\ms{L}(\Lambda)$ on $\Fl'_Y$ descends to a line bundle on $\mr{Bun}_{\G'}$. Pulling back via the composition of morphisms 
$$\Gr_{\G',C}\to\Gr_{\G'} \to\mr{Bun}_{\G'},$$
we get a line bundle $\mc{L}_o$ on $\Gr_{\G',C}$, which can be regarded as a line bundle on $ \Gr_{\G,C}[o]$.

Let  $\xi\in M$ be the element such that $\xi+\mu\in \check{Q}$. Let $T_\xi: \Gr_{\G,C}[o]\simeq \Gr_{\G,C}[\xi]$ be the translation map by multiplying $s_{-\xi}$. Let $\mc{L}_\xi:=(T_\xi)_*(\mc{L}_o)$ denote the push-forward of $\mc{L}_o$. In view of \eqref{eq_global_sect}, the fiber $\Gr_{\G,a}[\xi]$ of $\Gr_{\G,C}[\xi]$ at $a\not=0$ is the component $\Gr_G^\xi$ of $\Gr_G$ containing $e_{-\xi}$. By \eqref{eq_sckappa} and \eqref{5324798} in Section \ref{Section_main}, the line bundle $\mc{L}_\xi$ restricting to $\Gr_G^\xi$ is $\ms{L}(c\Lambda_{\xi})$, where $\Lambda_\xi=\Lambda_o+ \omega_\xi$ denotes the fundamental weight of level one corresponding to $\xi$  of the untwisted affine Dynkin diagram associated to $G$. On the other hand, up to a scalar of $\delta$, $\Lambda_\xi$ is equal to the weight $$\varsigma_\xi(\Lambda_o)=\varrho^{-\xi}w_{\xi}w_0(\Lambda_o)=\Lambda_o+ \iota(\xi)-\frac{1}{2}(\iota(\xi)\st \iota(\xi))\delta,$$
where the second equality follows from \eqref{Weyl_action}, and $\varsigma_\xi$ is the length zero element associated to $\xi$ (defined in \eqref{529345}).
It follows that $\omega_\xi= \iota(\xi)$, where $\iota$ is defined in \eqref{4529634}.

In view of \eqref{eq_global_sect} again, over $0\in C$ the translation map $T_\xi:\Gr_{\G,C}\to \Gr_{\G,C}$ is  the map $\Fl_Y\to \Fl_Y$ given by $x\mapsto n^{-\xi}\cdot x$. This restricts to the isomorphism $n^{-\xi}:\Fl_Y^\circ\simeq  \Fl_Y^\kappa$, as from $\xi+ \mu \in \check{Q}$ and $\kappa+ \bar{\mu}\in \check{Q}_\tau$ we have $\kappa-\bar{\xi}\in \check{Q}_\tau$, cf.\,\eqref{eq_kappa}. Then, the restriction of $\mc{L}_\xi$ to $\Gr_{\G,0}[\xi]\simeq  \Fl_Y^\kappa$ is $\ms{L}(\Lambda)_\kappa$, since
$$(\mc{L}_\xi)|_{\Fl_Y^\kappa}= (n^{-\xi})_*((\mc{L}_o)|_{\Fl_Y^\circ})=(n^{-\xi})_*(\ms{L}(\Lambda))=\ms{L}(\Lambda)_\kappa.$$

\begin{lem}\label{4592384}
The line bundle $\mc{L}_\xi$ on $\Gr_{\G,C}[\xi]$ has a $T'^{\tau}\times C$-equivariant structure. 
\end{lem}
\begin{proof}
A $T'^\tau\times C$-equivariant structure on $\mc{L}_\xi$ over $\Gr_{\G,C}[\xi]$ is equivalent to a $\mr{Ad}_{s_{\xi}}(T'^\tau\times C)$-equivariant structure on $\mc{L}_o$ over $\Gr_{\G,C}[o]$. Applying Proposition \ref{equivariant}, the line bundle $\mc{L}_o$ on $\Gr_{\G,C}[o]$ has a $\mr{Ad}_{s_{\xi}}(T'^\tau\times C)$-equivariant structure.  Note that $\mr{Ad}_{s_{\xi}}(T'^\tau\times C)=T'^\tau\times C$. Hence, $\mc{L}_\xi$ has a $T'^\tau\times C$-equivariant structure.
\end{proof}

We are now ready to conclude the proof of Theorem \ref{thm_mainthm}.
\begin{proof}
By Theorem \ref{532953} and Lemma \ref{4592384}, the space $H^0(\Grb_{\G,C}^\mu,\mc{L}_\xi)$ is a $T'^{\tau}$-module and flat over $C$. Modifying the equivariant structure on $\mc{L}_\xi$ by a character of $T'^\tau$, we can assume that $T'^\tau$ acts on $\mc{L}_\xi|_{s_\xi}$ via the weight $-c\iota(\xi)|_{T'^\tau}$. Restricting to the fiber $\Grb_{\G,a\not=0}^\mu\simeq \Grb_{G,\mu}$, $T'^\tau$ acts on $\ms{L}(c\Lambda_{\xi})|_{e_{-\xi}}$ via the weight $-c\iota(\xi)|_{T'^\tau}$. This matches the natural $T'$-equivariant structure on the line bundle $\ms{L}(c\Lambda_{\xi})$ over $\Grb_{G,\mu}$ described in Lemma \ref{4237984}. 

By the complete reducibility of $T'^\tau$, we have an isomorphism of $T'^{\tau}$-modules over different fibers:
\begin{equation}\label{5293845}
H^0(\Grb_{\G,0}^\mu,\mc{L}_\xi|_0)\simeq H^0(\Grb_{\G,a}^\mu,\mc{L}_\xi|_a).
\end{equation}
It can be rewritten as the following isomorphism of $T'^{\tau}$-modules
\begin{equation}\label{923475}
H^0(\mc{A}_Y(\bar{\mu}),\ms{L}(\Lambda)_\kappa)\simeq H^0(\Grb_{G,\mu},\ms{L}(c\Lambda_{\xi})).
\end{equation}

Via the natural morphism $T^\tau{}' \simeq T'^\tau{}'\to T'^\tau$, we get a $T^\tau{}'$-module structure on the right side of \eqref{923475}, which matches the action stated in Theorem \ref{thm_mainthm}. Note that the line bundle $\ms{L}(\Lambda_{\xi})$ on $\Grb_{G,\mu}$ was denoted by $L$ in the statement of our theorem.

Now, we restrict to the fiber $\Grb_{\G,0}^\mu\simeq \mc{A}_Y(\bar{\mu})$. By assumption, $T'^\tau$ acts on  $\ms{L}(\Lambda)_\kappa|_{e_{-\bar{\xi}}}$ via the weight $-c\iota(\xi)|_{T'^\tau}$. Via the natural morphism $T^\tau{}' \simeq T'^\tau{}'\to T'^\tau$, we get a $T^\tau{}'$-module structure on the left side of \eqref{923475}. To finish the proof, we shall compare this $T^\tau{}'$-module structure with the one stated in Theorem \ref{thm_mainthm}.

Recall that $G(\mc{O})^{\tau}{}'=G(\mc{O})^{\tau,\circ}\times_{G^{\tau,\circ}}G^\tau{}'$. Via the isomorphism $\Fl_Y^\kappa \simeq \Fl'_{Y_\kappa}$ defined in (\ref{eq_FL_comp}),  the line bundle $\ms{L}(\Lambda)_\kappa$ on $\Fl_Y^\kappa$ corresponds to the line bundle $\ms{L}(\varsigma_\kappa(\Lambda))$ on  $\Fl'_{Y_\kappa}$.  In view of Lemma \ref{4237984} and Proposition \ref{thm_Gtau}, the line bundle $\ms{L}(\varsigma_\kappa(\Lambda))$ has a unique $G(\mc{O})^{\tau}{}'$-equivariant structure, and the induced $T^\tau{}'$-action on $\ms{L}(\varsigma_\kappa(\Lambda))|_{n^{-\xi}(\dot{\varsigma}_\kappa)^{-1}e'_0}$ is given by the weight  
\[ \varrho^{-\bar\xi}(\varsigma_\kappa)^{-1}(-\varsigma_\kappa(\Lambda))|_{T'^\tau }=\varrho^{-\bar\xi}(-\Lambda)|_{T'^\tau}=-\lambda-c\iota(\xi)|_{T'^\tau},\]
where $\bar{\xi}$ is the image of $\xi$ in $X_*(T)_\tau$, and the second equality follows from \eqref{Weyl_action}.

Recall that the isomorphism $\Fl_{Y}^\kappa\simeq \Fl'_{Y_\kappa}$ sends $e_{-\bar{\xi}}$ to $n^{-\xi}(\dot{\varsigma}_\kappa)^{-1}e'_0$. Thus, $T^\tau{}'$ acts on  $\ms{L}(\Lambda)_\kappa|_{e_{-\bar{\xi}}}$ via the weight $-\lambda-c\iota(\xi)|_{T'^\tau}$.  Hence, the $T^\tau{}'$-module structure on the left side of \eqref{923475} differ from the one induced from the $G(\mc{O})^\tau{}'$-equivariant structure by the weight $-\lambda$ as well. Therefore, there is an isomorphism of $T^\tau{}'$-module as stated in Theorem \ref{thm_mainthm}:
\begin{equation}\label{5983479}
H^0(\mc{A}_Y(\bar{\mu}),\ms{L}(\Lambda)_\kappa)\simeq H^0(\Grb_{G,\mu},\ms{L}(c\Lambda_{\xi}))\otimes \rk_{-\lambda} .
\end{equation}
This completes the proof of Part (1) of Theorem \ref{thm_mainthm}.

We now assume $\mr{char}(\rk)=0$. 
As mentioned in Section \ref{Section_main}, both sides of \eqref{5983479} are representations of $G'_\vartheta$. Since two sides of \eqref{5983479} have the same $T^{\tau}{}'$-character, by standard representation theory of reductive group we can conclude that \eqref{5983479} are isomorphic as  $G'_\vartheta$-representations.

\end{proof}

\section{Canonical $L^+\mc{G} $-equivariance on  line bundles over  $\Gr_{\mc{G}}$ }\label{32895693}

In Section \ref{sect_unique_ext}, we prove that the splitting of a central extension of smooth affine group scheme over a curve removing finitely many points can be automatically extended, and we also show that a line bundle over a projective flat scheme over a curve with an action of a group scheme, gives rise to a central extension in fppf topology.   In Section \ref{sect_global_equiv},  we apply these results to show that any line bundle on $\Gr_{\mc{G}}$ admits a unique $L^+\mc{G}'$-equivariant structure. 
\subsection{Central extensions of group schemes and splittings}
\label{sect_unique_ext}
We first recall the definition of central extensions of group schemes. Let $T$ be a scheme over an algebraically closed field $\rk$. An exact sequence of fppf group schemes over $T$ consists of morphisms $f:H_1\to H_2$ and $g:H_2\to H_3$ of fppf group schemes over $T$,
\begin{equation}\label{238492}
1\to H_1\xrightarrow{f} H_2 \xrightarrow{g} H_3 \to 1,
\end{equation}
such that $H_1\simeq \ker g$, and $g$ is surjective in fppf topology (equivalently, $g$ is faithfully flat and of locally finite presentation, cf.\,\cite[Expos\'e IV, Corollary 6.3.3]{SGA3}). If $H_1$ is in the center of $H_2$, then we say \eqref{238492} is a central extension of $H_3$ by $H_1$. 

\begin{prop}
\label{prop_ext_split}
Let $H$ be a smooth affine group scheme over an irreducible smooth curve $\Sigma$ over $\rk$, which is fiberwise connected. Let $\hat{H}$ be a central extension of $H$ by the constant group scheme $\mathbb{G}_{m,\Sigma}$,
$$1\to \mathbb{G}_{m,\Sigma} \to \hat{H}\xrightarrow{q} H\to 1.$$
Suppose that $q:\hat{H} \to H$ admits a splitting $\varphi$ over a nonempty open subset $U\subseteq \Sigma$. Then, $\varphi$ extends uniquely to a splitting over $\Sigma$.
\end{prop}
\begin{proof}
We can assume $\Sigma \setminus U$ consists of only one point $x_0 \in \Sigma$. 
Since $\widehat{H}$ is a central extension of $H$ by $\mathbb{G}_{m,\Sigma}$ in the fppf topology, 
we may regard $\widehat{H}$ as a $\mathbb{G}_m$-torsor on $H$. It naturally corresponds to a 
line bundle $L$ over $H$. A splitting $\varphi$ of $\widehat{H}$ over $U$ gives rise to a section 
$s_\varphi \in L(U)$, which is nowhere vanishing on $U$. By assumption, $H$ is a smooth 
affine scheme over $\rk$. Let $\mathrm{div}(s_\varphi)$ be the associated divisor of $s_\varphi$ on $H$. 
Then $\mathrm{div}(s_\varphi) = n \pi^{-1}(x_0)$,
for some integer $n$, where $\pi: H \to \Sigma$.

Suppose that $n \geq 0$. Then $s_\varphi$ is defined on $\pi^{-1}(U)$ and generically on $\pi^{-1}(x_0)$, i.e. $s_\varphi$ is defined on an open subset of $H$ whose compelment is of codimension 2. By Hartog's lemma, 
$s_\varphi$ is defined fully on $H$. Restricting $s_\varphi$ to $\Sigma$ via the unit $e: \Sigma \to H$, $e^\ast s_\varphi$ is the nowhere vanishing section of $e^\ast L$ corresponding to the unit 
$\hat{e}: \Sigma \to \widehat{H}$ of $\widehat{H}$. Thus, $n = 0$. Suppose that $n \leq 0$, we consider the splitting 
$\varphi^{-1}: H|_U \to \widehat{H}|_U$. By the same argument, we can also conclude $n = 0$. 
Therefore, $s_\varphi$ is a nowhere vanishing section of $L$ over $\Sigma$. It corresponds  
to a unique extension of $\varphi$ to $\Sigma$.
\end{proof}

Let $X$ be a projective and flat scheme over $\Sigma$ such that, for any $\rk$-point $ a \in \Sigma$, the fiber $X_a$ is reduced and connected. Let $\mathcal{L}$ be a line bundle on $X$.
Let $H$ be an affine group scheme over $\Sigma$, acting on $X$ over $\Sigma$. We can construct a group functor $\widehat{H}$ over $\Sigma$. For any scheme $S$ over $\Sigma$, 
\begin{equation}
\label{eq_cen_ext}
\hat{H}(S)=\left\{ (h,\phi)  \,\middle|\,   h \in {\rm Map}_{\Sigma}(S,H) ,   h^*\mathcal{L}_{X_S}\simeq \mathcal{L}_{X_S}  \right\},
\end{equation}
where $X_S:=X \times_\Sigma S $ and $h$ is an automorphism on $X_S$ over $\Sigma$ arising from the action of $H$ on $X$. 
$\widehat{H}$ is clearly a fppf sheaf over $\Sigma$. Let $i: \mathbb{G}_{m, \Sigma} \to \widehat{H}$ be the natural map given by mapping $t \in \mathbb{G}_m(S) = H^0(S, \ms{O}_S)^\times$ to $(e_S \in H(S), \,\, \mathcal{L}_{X_S} \overset{t}{\simeq} \mathcal{L}_{X_S})$, where $e_S$ is the unit in $H(S)$.

\begin{prop}
\label{prop_exact_fppf}
With the same assumption as above, and we further assume that $H$ is fiberwise connected over $\Sigma$, and for any $\rk$-point $ a \in \Sigma$, $X_a$ has a discrete Picard group. Then, the sequence
\[
1 \to \mathbb{G}_{m, \Sigma} \xrightarrow{i} \widehat{H} \xrightarrow{q} H \to 1
\]
is exact in the fppf topology.
\end{prop}
\begin{proof} 
By \cite[IV3, 12.2.4(vi)]{EGA},  the morphism $\pi: X\to \Sigma$ has geometrically reduced and geometrically connected fibers.  For any $\Sigma$-scheme $ S \xrightarrow{f} \Sigma$,we consider the following pullback diagram:
$$\xymatrix{
X_S \ar[d]_{\pi_S} \ar[r]^{f_X} &X\ar[d]^\pi\\
S \ar[r]^f &\Sigma}.$$
Then
$$\ker(q)(S) = \{
\mathcal{L}_{X_S} \overset{\varphi}{\simeq} \mathcal{L}_{X_S} \}\simeq \{ \mathscr{O}_{X_S} \overset{\varphi}{\simeq} \mathscr{O}_{X_S}\},$$
where $\mc{L}_{X_S}=(f_X)^*\mc{L}$.

By base change (cf.\,\cite[Remark 9.3.1]{Hartshorne77}), there is a canonical morphism $ f^\ast (\pi_\ast \mathscr{O}_{X})\to (\pi)_\ast \mathscr{O}_{X_S}$ of coherent sheaves. Notice that $\pi_S$ is projective and flat. Moreover, $\pi_S$ has geometrically reduced and geometrically connected fibers, as for any $s\in S$ the fiber $X_s:=(X_S)_s$ of $X_S$ over $s$ is isomorphic to the fiber of $X$ over $f(s)$. In particular, $\dim_{\rk(s)} H^0(X_s, \mathscr{O}_{X_s})=1$ for any $s\in S$, where $\rk(s)$ is the residue field of $s$. By Grauert theorem (cf.\,\cite[Corollary 12.9]{Hartshorne77}), $f^\ast (\pi_\ast \mathscr{O}_{X})$ and $(\pi)_\ast \mathscr{O}_{X_S}$ are locally free, and for any $s\in S$
\[  f^\ast (\pi_\ast \mathscr{O}_{X})\otimes \rk(s)\simeq H^0(X_s, \mathscr{O}_{X_s})\simeq  ((\pi_S)_\ast \mathscr{O}_{X_S}) \otimes \rk(s) . \]
 Thus,
$f^\ast (\pi_\ast \mathscr{O}_{X})\simeq  (\pi_S)_\ast \mathscr{O}_{X_S} $. From Grauert theorem, we in particular get an isomorphism $(\pi^\ast \mathscr{O}_{X})|_a \simeq H^0(X_a,\ms{O}_{X_a})$ for any $\rk$-point $a\in \Sigma$.
Since $X_a$ is reduced, connected, and projective, $H^0(X_a, \mathscr{O}_{X_a}) = \rk.$
Thus, $\pi_\ast \mathscr{O}_X \simeq \mathscr{O}_\Sigma$, and $(\pi_S)_\ast \mathscr{O}_{X_S} \simeq f^\ast \mathscr{O}_\Sigma = \mathscr{O}_S$.
Therefore, $\varphi: \mathscr{O}_{X_S} \simeq \mathscr{O}_{X_S}$ corresponds to an element in
\[
H^0(X_S, \mathscr{O}_{X_S})^\times \simeq H^0(S, (\pi_S)_\ast \mathscr{O}_{X_S})^\times \simeq H^0(S, \mathscr{O}_S)^\times \simeq \mathbb{G}_m(S).
\]
This gives rise to an isomorphism $\mathbb{G}_{m}(S) \simeq \ker(q)(S)$, which is functorial in $\Sigma$-scheme $S$. Thus, there is an isomorphism $\mathbb{G}_{m,\Sigma}\simeq \ker(q)$. 

We now prove $q$ is faithfully flat and locally of finite presentation. Consider the action map $a: H \times_\Sigma X \to X$ and the projection $p_X:H\times_\Sigma X\to X$. We have two line bundles $a^\ast \mathcal{L}$ and $p^\ast \mathcal{L}$ on $H \times_\Sigma X$. For any $\rk$-point $g \in H$, let $a = \pi_H(g)$, where $\pi_H: H \to \Sigma$. Then,
\[
(a^\ast \mathcal{L})|_g = g^*(\mathcal{L}|_{X_a}), \quad (p_X^\ast \mathcal{L})|_g = \mathcal{L}|_{X_a}.
\]

By assumption, $\mathrm{Pic}(X_a)$ is discrete and $H_a$ is connected. Then, $H_a$ acts on $\mathrm{Pic}(X_a)$ trivially, i.e.\,$(a^\ast \mathcal{L})|_g = (p_X^\ast \mathcal{L})|_g$.
In view of Lemma \ref{lem_see_saw}, there exists an isomorphism
\[
a^\ast \mathcal{L} \simeq p^\ast \mathcal{L} \otimes p_H^\ast \mathcal{E},
\]
for some line bundle $\mathcal{E}$ on $H$, where $p_H: H \times_\Sigma X \to H$ is a projection.
Thus, for any $\rk$-point $ g \in H$, there exists an open subset $U$ of $H$ containing $g$ such that
\[
a^\ast \mathcal{L}|_{U \times_\Sigma X} \cong p^\ast \mathcal{L}|_{U \times_\Sigma X}.
\]
This amounts to a section of $q: \widehat{H} \to H$ over $U \subseteq H$.
Thus, $\widehat{H}$ is isomorphic to $H \times \mathbb{G}_m$ locally in  Zariski topology.
This in particular follows that $\widehat{H}$ is an affine group scheme over $\Sigma$, and $q: \widehat{H} \to H$ is
faithfully flat and locally of finite presentation.
\end{proof}
The following lemma is a slight generalization of \cite[Chap.III, ex.12.4]{Hartshorne77}.
\begin{lem}
\label{lem_see_saw}
Let $X \xrightarrow{\pi} S$ be a flat projective morphism of schemes, where $S$ is an integral scheme of finite type over $\rk$. Assume that for any $\rk$-point $s\in S$,  the fiber $X_s$ is reduced and connected. Let $\mathcal{L}$ and $\mathcal{L}'$ be line bundles over $X$, such that 
$\mathcal{L}|_{X_s} \simeq \mathcal{L}'|_{X_s}$ for any $\rk$-point $s\in S$.
Then $\mathcal{L} \simeq \mathcal{L}' \otimes \pi^\ast \mathcal{E}$, for some line bundle $\mathcal{E}$ over $S$.
\end{lem}
\begin{proof}
We may assume $\mathcal{L}' = \mathscr{O}_X$. By Grauert theorem (cf.\,\cite[Corollary 12.9]{Hartshorne77}), for any $\rk$-point $s\in S$ we have
\[
(\pi_\ast \mathcal{L})|_s \simeq H^0(X_s, \mathcal{L}|_{X_s}) \simeq H^0(X_s, \mathscr{O}_{X_s}) \simeq \rk,
\]
where the second isomorphism follows from the assumption that $(\pi_*\mc{L})|_{X_S}\simeq \ms{O}_{X_S}$, and the third one follows from the assumption that $X_s$ is projective, reduced, and connected.

Since $S$ is integral, it follows that $\mathcal{E} := \pi_\ast \mathcal{L}$ is a line bundle on $S$. Then, $\pi^*\mathcal{E} = \pi^\ast\pi_* \mathcal{L} \simeq \mathcal{L}$.  
\end{proof}

\subsection{Picard group of  $\Gr_{\mc{G}}$}\label{4924695}
Let $G$ be a simply-connected simple algebraic group over $\rk$. As in Section \ref{3208530}, we fix a non-empty subset $Y$ of $\hat{I}_\tau$. Let $\ms{G}_Y$ be the parahoric group scheme associated to $Y$, and let $\mc{G}$ be the parahoric Bruhat-Tits group scheme over $\bar{\A}^1$ glued from $\ms{G}_Y$ and $\mr{Res}_{\A^1-0/\bar{\A}^1-0} (G_{\A^1-0})^\tau$. Let $\Gr_{\G}$ be the global affine Grassmannian of $\G$ over $\bar{\A}^1$ and let $L^+\mc{G}$ be the jet group scheme of $\G$ over $\bar{\A}^1$, see Definition \ref{def_grass}.

Note that $\Gr_{\G,0}\simeq \Fl_Y$. We have the following restriction map, 
\begin{align*}
\mf{R}:\mr{Pic}(\Gr_{\G})&\to \mr{Pic}(\Fl_Y),\\
[\mc{L}]&\mapsto [\mc{L}_0]
\end{align*}
where $\mc{L}_0$ denotes the restriction of $\mc{L}$ to $\Gr_{\G,0}$.

\begin{prop}
    \label{416947}
The restriction map $\mf{R}$ is an isomorphism.
\end{prop}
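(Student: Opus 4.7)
I prove surjectivity and injectivity of $\mf{R}$ separately, exploiting the equivariant constructions of Section \ref{sec_equivariantbundle} for the former and the global Schubert filtration of $\Gr_\G$ for the latter.

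\textbf{Surjectivity.} Given a line bundle $\ms{L}$ on $\Fl_Y$, decompose $\Fl_Y = \sqcup_\kappa \Fl_Y^\kappa$ into its connected components. Under the identification \eqref{eq_sckappa} together with \eqref{5324798}, write $\ms{L}|_{\Fl_Y^\kappa} \simeq \ms{L}(\Lambda_\kappa)$ for suitable affine weights $\Lambda_\kappa$. By Theorem \ref{thm_lgequivarian} applied componentwise (after passage to the simply-connected cover as in \eqref{eq_sckappa}), $\ms{L}$ admits a natural $\mc{P}_Y^-$-equivariant structure. Proposition \ref{45393240} then lets $\ms{L}$ descend to a line bundle $\mc{L}_{\mr{Bun}_\G}$ on $\mr{Bun}_\G$, and the pullback $\mc{L} := \mr{pr}^* \mc{L}_{\mr{Bun}_\G}$ along $\mr{pr}: \Gr_\G \to \mr{Bun}_\G$ is a line bundle on $\Gr_\G$ with $\mc{L}_0 \simeq \ms{L}$. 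Since $\bar{\A}^1 \simeq \A^1$ has trivial Picard group, $e^*\mc{L}$ is trivializable and admits a rigidification $\eta$; then $\mf{R}([(\mc{L}, \eta)]) = [\ms{L}]$.

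\textbf{Injectivity.} Suppose $(\mc{L}, \eta)$ is a rigidified line bundle on $\Gr_\G$ with $\mc{L}_0 \simeq \mc{O}_{\Fl_Y}$. The ind-scheme $\Gr_\G$ is a colimit of the global Schubert varieties $X^\mu := \Grb_\G^\mu$ for dominant $\mu \in X_*(T)$; by \cite[Theorem 3]{Zhu14}, each is projective and flat over $\bar{\A}^1$ with geometrically connected reduced fibers $X^\mu_0 = \mc{A}_Y(\bar\mu)$ and $X^\mu_a \simeq \Grb_{G,\mu}$ for $a \neq 0$. The restriction $\mc{L}|_{X^\mu}$ is trivial on $X^\mu_0$. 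Because the Picard group of every fiber of $f: X^\mu \to \bar{\A}^1$ is discrete (the fibers are unions of Schubert varieties in $\Fl_Y$, whose Picard groups embed into the discrete lattice $\bigoplus_i \mb{Z}\Lambda_i$, or spherical Schubert varieties in $\Gr_G$ with Picard group $\mb{Z}$), the relative Picard functor $\mr{Pic}_{X^\mu/\bar{\A}^1}$ is \'etale over the connected base $\bar{\A}^1$. Triviality of $\mc{L}|_{X^\mu}$ on the fiber at $0$ then forces $\mc{L}|_{X^\mu} \simeq f^* M$ for some $M \in \mr{Pic}(\bar{\A}^1) = 0$, so $\mc{L}|_{X^\mu}$ is trivial. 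The rigidification $\eta$ pins down a canonical trivialization on each $X^\mu$, and in the colimit these glue to a trivialization of $(\mc{L}, \eta)$ on $\Gr_\G$.

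\textbf{Main obstacle.} The principal technical hurdle is justifying that $\mr{Pic}_{X^\mu/\bar{\A}^1}$ is representable and \'etale. \'Etaleness reduces to the vanishing $H^1(X^\mu_a, \mc{O}_{X^\mu_a}) = 0$ for every geometric point $a \in \bar{\A}^1$. For $a \neq 0$ this is standard cohomological vanishing for the normal spherical Schubert variety $\Grb_{G,\mu}$, while for $a = 0$ it requires a Mayer--Vietoris/Frobenius-splitting argument on the union $\mc{A}_Y(\bar\mu) = \bigcup_{w \in W^\tau} \overline{\Fl}_{Y, w(\bar\mu)}$, in the spirit of Theorem \ref{thm_surjective}. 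Once this vanishing is secured, the remainder of the injectivity argument is formal; the rest of the proof relies only on the equivariant results already established in the preceding subsections.
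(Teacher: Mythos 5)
Your surjectivity argument is essentially the paper's: descend $\ms{L}(\Lambda)$ to $\mr{Bun}_\G$ via its $\mc{P}_Y^-$-equivariant structure (Propositions \ref{thm_lgequivarian} and \ref{45393240}) and pull back along $\mr{pr}$. (The decomposition into components $\Fl_Y^\kappa$ is vacuous here, since $G$ is assumed simply connected in this section and $\Fl_Y$ is connected, but that is harmless.)

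Your injectivity argument, however, departs from the paper and has genuine gaps. The paper's route is short: by \cite[Proposition 4.1]{Zhu14} the central charge of $\mc{L}_a$ is constant in $a$, hence zero; since the central charge map $\mr{Pic}(\Gr_{G})\to\mb{Z}$ is an isomorphism \cite{Faltings03}, every fiber $\mc{L}_a$ is trivial; then $\pi_*\mc{L}$ is a line bundle on $\bar{\A}^1$, hence trivial, and the adjunction $\pi^*\pi_*\mc{L}\to\mc{L}$ is a fiberwise (hence global) isomorphism. You instead try to propagate triviality from the special fiber to the generic fiber via the relative Picard functor of $\Grb_\G^\mu\to\bar{\A}^1$. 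Two steps there are not justified. First, the \'etaleness you need rests on $H^1(\mc{A}_Y(\bar\mu),\mc{O})=0$, which you flag as the ``main obstacle'' but do not prove; ``the Picard group of the fiber is a discrete lattice'' is a statement about the group of $\rk$-points and does not by itself control the tangent space $H^1(X_0,\mc{O}_{X_0})$ of the Picard functor, and Theorem \ref{thm_surjective} concerns ample $\ms{L}(\Lambda)$, not the structure sheaf. Second, and more seriously, even granting unramifiedness, the step ``triviality on the fiber at $0$ forces $\mc{L}|_{X^\mu}\simeq f^*M$'' requires the locus where your section agrees with the identity section to be closed as well as open; that is a separatedness statement for $\mr{Pic}_{X^\mu/\bar{\A}^1}$, which is delicate precisely because the special fiber $\mc{A}_Y(\bar\mu)$ is reducible, and you do not address representability or separatedness at all. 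Note also that you are running the hard direction (special fiber to generic fiber): upper semicontinuity of $h^0$ gives you nothing here, which is exactly why the paper invokes the constancy of the central charge. As written, the injectivity half of your proof does not close.
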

\begin{proof}
By Proposition \ref{324984}, the restriction map is surjective. Let $\mc{L}$ be a line bundle on $\Gr_{\G}$. Assume the restriction $\mc{L}_0$ is trivial, we shall show $\mc{L}$ is also trivial. Let $\pi:\Gr_{\G}\to \bar{\A}^1$ denote the projection map.  For any $a\in \bar{\A}^1$, by \cite[Proposition 4.1]{Zhu14}, the central charge of the restriction $\mc{L}_a$ is equal to the central charge of $\mc{L}_0$, hence is equal to $0$. For any $a\neq 0$, the central charge map $\mr{Pic}(\Gr_{\G,a})\to \mb{Z}$ is an isomorphism, cf.\,\cite[Corollary 12]{Faltings03}. Therefore, $\mc{L}_a$ is also trivial for any $a\neq 0$. Since $\Gr_{\G}$ is ind-projective over $\bar{\A}^1$, we have $\dim\big(H^0(\Gr_{\G,a},\mc{L}_a)\big)=1$ for any $a\in \bar{\A}^1$. It follows that $\pi_*(\mc{L})$ is a locally free sheaf of rank $1$ on $\bar{\A}^1$. Thus,  $\pi_*(\mc{L})\simeq \ms{O}_{\bar{\A}^1}$.  
By adjunction, we have a morphism $\pi^*\pi_*(\mc{L})\to \mc{L}$. Set $X:=\Gr_{\G}$. For any $x\in X$, the map $(\pi^*\pi_*\mc{L})|_x=H^0(\Gr_{\mc{G},a}, \mc{L}_a)\to \mc{L}|_x$ is an isomorphism, since $\mc{L}_a:=\mc{L}|_{\Gr_{\mc{G},a}}$ is trivial on $\Gr_{\mc{G},a}$ and $\Gr_{\mc{G},a}$ is ind-projective, where $a=\pi(x)$.  It follows that $\mc{L}\simeq \pi^*\pi_*(\mc{L})\simeq \pi^*\ms{O}_{\bar{\A}^1}$ is trivial. Therefore, the restriction map $\mf{R}$ is also injective.
\end{proof}

For any affine weight $\Lambda=\sum_{i\in \hat{I}_\tau } n_i\Lambda_i$, by this proposition we can uniquely associate to a line bundle $\mc{L}(\Lambda)$ on $\Gr_{\mc{G}}$ such that $\mc{L}(\Lambda)_0\simeq \ms{L}(\Lambda)$ on $\Fl_Y$.
\vspace{2mm}

Let $C=\mathbb{A}^1$ and $\bar{C}=\bar{\A}^1$. We now allow $G$ to be a general simple algebraic group over $\rk$. Recall the global affine Grassmannian $\Gr_{\G,C}:=\Gr_{\G}\times_{\bar{C}} C$ over $C$. As in Section \ref{652935}, let $M$ be the set of zero or miniscule coweights of $G$.  For any $\xi\in M$, let $\Gr_{\G,C}[\xi]$ be the $\xi$-component of $\Gr_{\G,C}$. Note that $\Gr_{\G,C}[\xi]|_0\simeq \Fl_Y^{\kappa}$, where $\kappa$ has the same image as $\xi$ in $X_*(T)_\tau/X_*(T')_\tau$. Recall the section $s_{-\xi}:C\to L\G\times_{\bar{C}}C$ which satisfies $s_{-\xi}(0)=n^{-\xi}\in T(\mc{K})^\tau$, where $n^{-\xi}$ is defined in \eqref{462918}. The translation by $s_{-\xi}$ gives rise to a morphism $T_{\xi}:\Gr_{\G,C}[o]\simeq \Gr_{\G,C}[\xi]$.

\begin{prop}
\label{prop_comp_linebun}With the assumptions above, the following restriction map is an isomorphism,
\begin{equation*}
\mr{Pic}(\Gr_{\G,C}[\xi])\to \mr{Pic}(\Fl_Y^\kappa), \quad [\mc{L}]\mapsto [\mc{L}_0].
\end{equation*}
\end{prop}
\begin{proof}
Let $\mc{L}$ be a line bundle on $\Gr_{\G,C}[\xi]$. The pull-back $T_{\xi}^*\mc{L}$ is a line bundle on the neutral component $\Gr_{\G,C}[o]\simeq \Gr_{\G',C}$, where $\G'$ is the parahoric Bruhat-Tits group scheme associated to the simply-connected cover $G'$ of $G$. Note that at the $0$-fiber, we have $\Fl_Y^\circ=\Gr_{\G,C}[o]|_0\simeq \Gr_{\G',0}= \Fl_{Y}'$. By Proposition \ref{416947}, $T_{\xi}^*\mc{L}$ is uniquely determined by its restriction on the neutral component $\Fl^\circ_{Y}$ of $\Fl_Y$. Hence, $\mc{L}$ is uniquely determined by $\mc{L}_0$, as $(T^*_\xi \mc{L})|_{\Fl^\circ_Y}\simeq( n^{-\xi} )^*(\mc{L}|_{\Fl^\kappa_Y } )=( n^{-\xi} )^*( \mc{L}_0 )$. Hence, the restriction map is injective.

For the surjectivity, let $\ms{L}$ be a line bundle on $\Fl_Y^\kappa$. Then,  $(n^{-\xi})^*\ms{L}$ is a line bundle on $\Fl_Y^\circ \simeq \Fl'_Y$. 
By Proposition \ref{416947}, there is a line bundle $\mc{L}'$ on $\Gr_{\G}[o]\simeq \Gr_{\G'}$ such that $\mc{L}'|_{\Fl_Y^\circ} \simeq (n^{-\xi})^*\ms{L}$. Let $\mc{L}:=(T_\xi)_*\mc{L}'$ be the push-forward of $\mc{L}'$ via $T_\xi$. Then, $\mc{L}$ is a line bundle on $\Gr_{\G,C}[\xi]$ whose restriction to the fiber $\Gr_{\G,C}[\kappa]|_0\simeq \Fl_Y^\kappa$ is $\ms{L}$. Hence, restriction map is surjective.
\end{proof}

\subsection{$L^+\mc{G}$-equivariance on line bundles over $\Gr_{\mc{G}}$}  \label{sect_global_equiv}

Let $L^+\G$ be the global jet group scheme of $\G$ over $\bar{C}:=\bar{\A}^1$. For any $S$-point $S\xrightarrow{x} \bar{C}$, $L^+\G(S):=\G(\hat{\Gamma}_x)$, where $\hat{\Gamma}_x$ is the formal completion of $\bar{C}\times S$ along the graph $\Gamma_x$. Similarly, we define $L_k^+\G$ to be the global $k$-jet group scheme of $\G$, i.e.\,$L_k^+\G(S):=\G(\Gamma_{x,k})$, where $\Gamma_{x,k}=\mr{Spec}(\ms{O}_{\bar{C} \times S}/\ms{I}_{x}^{k+1})$ and $\ms{I}_{x}$ is the ideal sheaf of $\Gamma_x$ in $\ms{O}_{\bar{C}\times S}$.

Note that $L^+\G$ and $L^+_k\G$ are representable group scheme over $\bar{C}$, cf.\,\cite[Proposition 2]{He}. Moreover, $L^+\G=\varprojlim_{k} L^+_k\G$. For each $k$, $L^+_k\G$ is of finite type over $\bar{C}$, and it is easy to see that $L^+_k\G$ is formally smooth, as $\G$ is smooth over $\bar{C}$. Thus, $L^+_k\G$ is smooth over $\bar{C}$ as well.  Set $L^+\G_C:= L^+\G\times_{\bar{C}}C$ and $L^+_k\mc{G}_C:= L^+_k \mc{G}\times_{\bar{C}} C$, where $C=\A^1$.

Let $G'$ be the simply-connected cover of $G$, and let $\mc{G}'$ be the associated parahoric Bruhat-Tits group scheme over $\bar{C}$. We similarly have the global jet ($k$-jet) group schemes $L^+\mc{G}', L^+_k\mc{G}', L^+_k\mc{G}'_C$ and $L^+_k\mc{G}'_C$.

Consider the global affine Grassmannian $\Gr_{\G,C}:=\Gr_{\G}\times_{\bar{C}} C$ over $\bar{C}$. There is a section $s_\mu:C\to \Gr_{\G,C}$ for any $\mu\in X_*(T)^+$.  Let $\Grb^\mu_{\mc{G},C}$ be the global Schubert variety as defined in Definition \ref{def_global_schubert}. The global Schubert variety $\overline{\Gr}_{\G, C}^\mu$ is contained in $\Gr_{\G,C}[\xi]$ if and only if $\mu+\xi\in X_*(T')$.

\begin{thm}\label{569235}
Let $\mc{L}$ be a line bundle on $\Gr_{\G,C}[\xi]$. Let $\mu\in X_*(T)^+$ such that $\mu+\xi\in X_*(T')$. There is a unique $L^+\mc{G}'_C $-equivariant structure on the restriction $\mc{L}|_{ \Grb^\mu_{\mc{G},C} }$ of $\mc{L}$ to the global Schubert variety $\Grb^\mu_{\mc{G},C}$.  
\end{thm}

\begin{proof}
We simply denote the restriction $\mc{L}|_{\Grb_{\G,C}^\mu}$ by $\mc{L}$. 
By the same argument as in \cite[Lemma 5.11]{HY}, the action of $L^+\mc{G}'_C$ on $\Grb_{\G,C}^\mu$ factors through the action of $L^+_k\mc{G}'_C$ for large enough integer $k$. In the following, we will only work with large enough $k$.

Via the line bundle $\mc{L}$, we can construct a central extension $\widehat{L^+_k\mc{G}'_C}$ of $L^+_k\G'_C$ by the constant group scheme $\mathbb{G}_{{\rm m}, C }$ as in \eqref{eq_cen_ext},
\begin{equation}\label{489434}
1\to \mathbb{G}_{{\rm m}, C }\to \widehat{L^+_k\mc{G}'_C} \xrightarrow{p} L^+_k\G'_C \to 1.
\end{equation} 
By Proposition \ref{prop_exact_fppf}, this sequence is exact in the fppf topology.

Note that $\Grb_{\G,C\setminus\{0\}}^\mu\simeq \Grb_{G,\mu}\times (C\setminus\{0\})$ and $L^+_k\G'_{C\setminus\{0\}}\simeq L^+_kG' \times (C\setminus\{0\})$. We have a unique splitting $\beta:L^+_k\G'_{C\setminus\{0\}}\to \widehat{L^+_k\G'}_{C\setminus\{0\}}$ of the extension $\eqref{489434}$ over $C\setminus\{0\}$. By Proposition \ref{prop_ext_split}, this splitting extends to a unique splitting over $C$. 
Now, the theorem follows from the fact that an $L^+\G'_C $-equivariant structure on $\mc{L}$ is equivalent to a splitting of \eqref{489434}. 
\end{proof}

When $\mu$ is $\tau$-invariant i.e.\,$\tau(\mu)=\mu$, the section $s_\mu$ descends to a section $\bar{s}_\mu:\bar{C}\to \Gr_{\G}$, see \cite[Remark 3.2]{Zhu14}. Let $(X_*(T)^+)^\tau$ denote the set of $\tau$-invariant elements of $X_*(T)^+$.  For any  $\mu\in (X_*(T)^+)^\tau$, we define the global Schubert variety $\Grb_{\G}^\mu$ to be the minimal $L^+\G$-stable irreducible closed subvariety of $\Gr_{\G}$ containing $\bar{s}_\mu$.

\begin{lem}\label{532985}
The global affine Grassmannian $\Gr_{\G}$ is a direct limit of global Schubert varieties with respect to the natural partial order $\prec$ on $(X_*(T)^+)^\tau$, i.e.\,$\Gr_{\G}=\varinjlim\limits_{\mu} \Grb_{\G}^\mu$. Similarly, we have $\Gr_{\G,C}=\varinjlim\limits_{\mu } \Grb_{\G,C}^\mu$, where the direct limit is over $\mu\in X_*(T)^+$ with respect to the standard partial order.
\end{lem}
\begin{proof}
Consider the ind-closed embedding $j:\varinjlim\limits_\mu \Grb_{\G}^\mu\to \Gr_{\G}$ over $\bar{C}$, where $\mu\in (X_*(T)^+)^\tau$. Note that $\Gr_{\G, C\setminus\{0\}}\simeq \Gr_{G,C\setminus\{0\}}$ and it restricts to an isomorphism $\Grb_{\G,C\setminus\{0\} }^\mu\simeq \Grb_{G,C\setminus\{0\}}^\mu$ for any $\mu\in (X_*(T)^+)^\tau$. Since $\varinjlim\limits_\mu \Grb_{G,C\setminus\{0\}}^\mu \simeq \Gr_{G,C\setminus\{0\}}$,  it induces an isomorphism
$j_{C\backslash \{0\}}:\varinjlim\limits_\mu  \Grb_{\G,C\setminus\{0\}}^\mu\simeq  \Gr_{\G,C\setminus\{0\}}$. As $C\setminus\{0\}\to \bar{C}\setminus\{0\}$ is an \'etale $\mathbb{Z}/r\mathbb{Z}$-cover, the isomorphism $j_{C\backslash \{0\}}$ descends to an isomorphism $j|_{\bar{C}\setminus\{0\}}:\varinjlim\limits_\mu \Grb_{\G,\bar{C}\setminus\{0\}}^\mu\to \Gr_{\G,\bar{C}\setminus\{0\}}$. 

We now show that the restrictions $j|_0$ is also an isomorphism.  Recall that $\Gr_{\G,0}\simeq \Fl_Y$ and $\Grb_{\G,0}^\mu\simeq \mc{A}_Y(\bar\mu)$ for any $\mu\in (X_*(T)^+)^\tau$, see \eqref{eq_fiber}. Thus, the restriction $j|_0$ is the ind-closed embedding $\varinjlim\limits_\mu\mc{A}_Y(\bar\mu)\to \Fl_Y$, where $\mu\in (X_*(T)^+)^\tau$.  Given any $w\in W_{\mr{aff}}$, we shall show that $w\preccurlyeq_{Y}\varrho^{\bar\mu}$ for some $\mu\in (X_*(T)^+)^\tau$.

Let $\mu\in (X_*(T)^+)^\tau$ be an element such that $\bar\mu\neq 0$. Since the element $\varrho^{\bar\mu}$ has infinite order, any reduced word expression of $\varrho^{\bar\mu}$ must contain all simple reflections $s_i$ of $W_{\rm aff}$. Therefore, $s_{i}\prec \varrho^{\bar\mu}$ for any $i\in \hat{I}_\tau$. Let $w=s_{i_1}\cdots s_{i_k}$ be a reduced word in $ W_{\mr{aff}}$. Since $\ell(\varrho^{k\bar\mu})=k\ell(\varrho^{\bar\mu})$,   the element $\varrho^{k\bar\mu}$ has a reduced word expression which contains $s_{i_1}\cdots s_{i_k}$ as a subword. Thus, $w\prec \varrho^{k\bar\mu}$.  By \cite[Lemma 1.3.18]{Ku}, we get $w\preccurlyeq_Y \varrho^{k\bar\mu}$.  Thus, $\overline{\Fl}_{Y,w}\subseteq \overline{\Fl}_{Y,k\bar\mu}\subseteq \mc{A}_Y(k\bar\mu)$. 
Since $\Fl_Y$ is the union of Schubert varieties $\bigcup_{w\in W_{\mr{aff}}}\overline{\Fl}_{Y,w}$, it follows that $j|_0:\varinjlim\limits_\mu\mc{A}_Y(\bar\mu)\simeq  \Fl_Y$. Finally, by a theorem of W.\,Reeves (cf.\,\cite[Theorem 4.23]{HY}), $j$ is an isomorphism.
\end{proof}

\begin{cor}\label{global_thm}
Assume that $G$ is simply-connected. Let $\mc{L}$ be a line bundle on $\Gr_{\G}$. There is a unique $L^+\mc{G} $-equivariant structure on the line bundle $\mc{L}$.  
\end{cor}
\begin{proof}
Let $\mc{L}_C$ be the pull back of $\mc{L}$ to $\Gr_{\G,C}$. For any $\mu\in X_*(T)^+$, by Theorem \ref{569235}, the restriction of $\mc{L}_C$ to $\Grb_{\G,C}^\mu$ has a unique $L^+\mc{G}_C $-equivariant structure. When $\mu$ is $\tau$-invariant, this $L^+\mc{G}_C $-equivariant structure descends to an $L^+\mc{G}$-equivariant structure on $\mc{L}|_{\Grb_{\G}^\mu}$. By taking a limit and applying Lemma \ref{532985}, we get an $L^+\mc{G}$-equivariant structure on $\mc{L}$. For the uniqueness, by Theorem \ref{569235}, the $L^+\mc{G}$-equivariant structure on $\mc{L}|_{\Grb_{\G}^\mu}$ is unique. Hence, the $L^+\mc{G}$-equivariant structure on $\mc{L}$ must be unique.
\end{proof}

\begin{prop}
Let $\mc{L}$ be a line bundle on $\Gr_{\G,C}[\xi]$ with level $c$. There is a unique $L^+\mc{G}'_C $-equivariant structure on $\mc{L}$. Moreover, the induced $T'^\tau$-action on $\mc{L}|_{s_{-\xi}}$ is given by the weight $-c\iota(\xi)$.
\end{prop}
\begin{proof}
The first statement immediately follows from Theorem \ref{569235} and Lemma \ref{532985}. By the same argument as in Proposition \ref{equivariant}, we have an embedding $T'^\tau\times C\hookrightarrow L^+\mc{G}'_C$. Moreover, the restriction of the embedding to the fiber at $a\neq 0$ is given by the natural embedding $T'^\tau\subseteq G'\subseteq G'[[z_a]]\simeq L^+\G'_{a}$, where $z_a:=z-a$. To prove the second statement, we consider the induced $T'^\tau\times C$-equivariant structure on $\mc{L}$.

Given any point $a\not=0$ in  $C$, $\mc{L}_a$ is a level-$c$ line bundle on  $\Gr_{\G,a}[\xi]\simeq \Gr_G^\xi$, cf.\,Section \ref{652935}. It is the push-forward of the line bundle $L^c$ on $\Gr_G^o\simeq \Gr_{G'}$ via translation map $\Gr_G^o\to \Gr_G^\xi$. 
As a $G'(\mc{K})$-homogeneous space, $\Gr_G^\xi$ is isomorphic to the partial affine flag variety $ G'(\mc{K})/\mr{Ad}_{z^{-\xi}}(G'(\mc{O}))$ mapping $e_{-\xi}$ to the base point $e'_0$, and the line bundle $\mc{L}_a$ corresponds to $\ms{L}(c\Lambda_\xi)$.  
By Proposition \ref{thm_Gtau}, $\mc{L}_a$ has a unique $G'(\mc{O})$-equivariant structure. Via the natural embedding $T'^\tau\hookrightarrow  G'(\mc{O})$, this equivariant structure gives rise to an action of $T'^\tau$ on the fiber $\ms{L}(c\Lambda_\xi)|_{e'_0}$. Proposition \ref{thm_Gtau} further asserts that this action is given by the weight $-c\iota(\xi)$. Note that the isomorphism $\Gr_G^\xi \simeq G'(\mc{K})/\mr{Ad}_{z^{-\xi}}(G'(\mc{O}))$ sends $e_{-\xi}$ to $e'_0$. It follows that $T'^\tau$ acts on $\mc{L}_a|_{e_{-\xi}}$ via the weight $-c\iota(\xi)$. Thus, the induced global $T'^\tau$-action on $\mc{L}|_{s_{-\xi}}$ is also given by the weight $-c\iota(\xi)$.
\end{proof}
The fiber of $L^+\mc{G}'_C $ at $0\in C$ is isomorphic to the parahoric subgroup $\mc{P}'_Y\subset G'(\mc{K})^\tau$ associated to $Y$.  The above proposition determines the $\mc{P}'_Y$-equivariant structure on the line bundle $\mc{L}(\Lambda)_\kappa$ over $\Fl_Y^\kappa$ obtained from the unique $L^+\mc{G}'_C $-equivariant structure on $\mc{L}$, where $\kappa$ is the minimal element in $X_*(T)^+_\tau$ such that $\kappa-\bar{\xi}\in \check{Q}_\tau$, cf.\,\eqref{eq_kappa}.

\section{Application of the coherence conjecture to Affine Demazure modules}\label{3985299}

\subsection{Affine Kac-Moody algebras}\label{sec_liealgebra}

Let $\mf{g}$ be a simple Lie algebra over $\mb{C}$. We fix a Borel subalgebra $\mf{b}\subseteq \mf{g}$ and a Cartan subalgebra $\mf{h}\subseteq \mf{b}$. Let $( \cdot, \cdot)$ be the invariant (symmetric, nondegenerate) bilinear form on $\mf{g}$ normalized so that the induced form on the dual space $\mf{h}^*$ satisfies $( \theta, \theta)=2$ for the highest root $\theta$ of $\mf{g}$.

Let $\tau$ be a diagram automorphism of $\mf{g}$ of order $r$ preserving $\mf{b}$ and $\mf{h}$, which is allowed to be trivial.  We fix an $r$-th primitive root of unity $\epsilon$.  Let $\mf{g}((t)):=\mf{g}\otimes_{\mb{C}} \mb{C}((t))$ be the loop algebra and let $\mf{g}((t))^\tau$ denote the $\tau$-fixed point subalgebra with respect to the extended $\tau$-action by $\tau(t):=\epsilon^{-1}t$. Then, the (twisted) affine Lie algebra $\tilde{L}(\mf{g},\tau)$ associated to $\mf{g}$ and $\tau$ is defined as follows, 
\begin{equation}\label{liealg}
\tilde{L}(\mf{g},\tau):=\mf{g}((t))^\tau\oplus \mb{C} K,
\end{equation}
where $K$ denotes the canonical central element and the Lie bracket on $\tilde{L}(\mf{g},\tau)$ is given by 
\[ [x(f)+zK,x'(f')+z'K]:=[x,x'](ff')+r^{-1}\mr{Res}_{t=0}((df)f')( x,x') K.\]

Let $\hat{L}(\mf{g},\tau):=\tilde{L}(\mf{g},\tau)\oplus \mb{C} d$ be the affine Kac-Moody algebra, where $d$ is the scaling element. When $\tau$ is trivial, we simply denote $\hat{L}(\mf{g},\tau)$ by $\hat{L}(\mf{g})$. Set $\hat{\mf{h}}^\tau:=\mf{h}^\tau\oplus \mb{C} K\oplus \mb{C} d$, which is the Cartan subalgebra of  $\hat{L}(\mf{g},\tau)$. Let $(\hat{\mf{h}}^\tau)^*$ denote the linear dual of $\hat{\mf{h}}^\tau$.  

Let $X_N^{(r)}$ be the affine Dynkin diagram associated to $\hat{L}(\mf{g},\tau)$, and we similarly have the notations $I_\tau, \hat{I}_\tau$, and $\{\alpha_i,\check{\alpha}_i\st i\in \hat{I}_\tau\}$, as defined in Section \ref{Notations}.  Recall the weight $\theta_0$ of $\mf{g}^\tau$ defined in \eqref{eq_theta_0}. We have another description of $\theta_0$ and its dual $\check{\theta}_0$ as follows,
\begin{gather}\label{eq_theta0}
\theta_0=
\begin{cases}
\text{highest root of }\mf{g}, &\text{if }r=1\\
\text{highest short root of }\mf{g}^\tau, &\text{if }r > 1 \text{ and } (\mf{g},r)\neq (A_{2n},2)\\
2\cdot\text{highest short root of }\mf{g}^\tau, &\text{if }(\mf{g},r)=(A_{2n},2).
\end{cases}\\
\label{eq_theta0check}
\check{\theta}_0=
\begin{cases}
\text{highest short coroot of } \mf{g} & \text{if } r=1\\
\text{highest coroot of } \mf{g}^\tau & \text{if } r>1 \text{ and } (\mf{g},r)\neq (A_{2n},2)\\
\frac{1}{2}\cdot\text{highest coroot of } \mf{g}^\tau& \text{if }  (\mf{g},r) = (A_{2n},2)
\end{cases}
\end{gather}

Recall the normalized symmetric bilinear form $(\cdot\st\cdot)$ on $\hat{\mf{h}}^\tau$ in \cite[\S 6.1]{Kac90}.
This bilinear form induces a map $\nu: \hat{\mf{h}}^\tau\to (\hat{\mf{h}}^\tau)^*$, which satisfies the property in \eqref{eq_dualmap}. By \cite[(6.2.2) and Proposition 5.1]{Kac90}, we have  
\begin{equation}\label{523969}
\textstyle (\theta_0\st\theta_0)=2a_o\check{a}_o=2\check{a}_o; \quad \nu(\check{\theta}_0)=\frac{\theta_0}{a_o\check{a}_o}=\frac{\theta_0}{\check{a}_o}.
\end{equation}

Let $\check{P}$ (resp.\,$\check{Q}$) denote the coweight (resp.\,coroot) lattice of $\mf{g}$. Let $\check{P}_\tau$ (resp.\,$\check{Q}_\tau$) denote the set of $\tau$-coinvariants of $\check{P}$ (resp.\,$\check{Q}$). Let $A:\mf{h} \to \mf{h}^\tau$ be the map defined by 
$  A(h):= \sum_{i=0}^{r-1}\tau^i(h)$, for any $ h\in \mf{h}$. This map descends to a map $\check{P}_\tau\to \mf{h}^\tau$, which is still denoted by $A$ if there is no confusion. Recall \eqref{4529634}, we have a well-defined map $\iota:\check{P}_\tau\to (\mf{h}^\tau)^*$ sending $\mu$ to $\nu(A(\mu))$.

\begin{lem}\label{numu} 
The map $\iota:\check{P}_\tau\to (\mf{h}^\tau)^*$ is induced from the normalized Killing form $(\cdot, \cdot)$ on $\mf{g}$, i.e.\,for any $\mu\in \check{P}_\tau$ and $h\in \mf{h}^\tau$, we have 
$\langle h, \iota(\mu) \rangle = (h, \tilde{\mu})$, where $\tilde\mu$ is a lifting of $\mu$ in $\check{P}$.
\end{lem}
\begin{proof}
By direct computation, we have 
\[\textstyle \langle h, \iota(\mu)\rangle=\langle h, \nu(A(\tilde\mu))\rangle=(h\st A(\tilde\mu))=\frac{1}{r}(h,A(\tilde\mu))=(h,\tilde\mu), \]
where the third equality is by \cite[Corollary 6.4]{Kac90}.
\end{proof}

\subsection{The coherence conjecture and affine Demazure modules}\label{sec_demazure}
Let $G$ be the adjoint group associated to $\mf{g}$. We fix a torus $T$ and a Borel subgroup $B$ corresponding to the Lie subalgebras $\mf{h}\subseteq \mf{b}$. 

We follow the notations in Section \ref{Section_main}. Let $W^\tau$ be the Weyl group of $\mf{g}^\tau$. We can identify $\check{P}$ (resp.\,$\check{Q}$) with the set $X_*(T)$ (resp.\,$X_*(T')$). Recall the affine Weyl group $W_{\mr{aff}}=\check{Q}_\tau\rtimes W^\tau$ and the extended affine Weyl group $\tilde{W}_{\mr{aff}}=X_*(T)_\tau\rtimes W^\tau$. The elements in $\tilde{W}_{\mr{aff}}$ are written as $\varrho^{\mu}w$ for $\mu\in X_*(T)_\tau$ and $w\in W^\tau$. We regard $W_{\mr{aff}}$ as the Weyl group of $\hat{L}(\mf{g},\tau)$, whose action on $(\hat{\mf{h}}^\tau)^*$ is given in \eqref{Weyl_action}.

For any dominant integrable weight $\Lambda$ of $\hat{L}(\mf{g},\tau)$, we denote by $V(\Lambda)$ the highest weight integrable representation of $\hat{L}(\mf{g},\tau)$. For any $\bar\mu\in X_*(T)_\tau$, there exists a unique element $\varsigma_{\kappa}\in \Omega$ such that $\varrho^{\bar{\mu}}\varsigma_{\kappa}^{-1}\in W_{\mr{aff}}$. The representation $V(\varsigma_{\kappa}(\Lambda))$ has an extremal weight vector of weight $\varrho^{\bar{\mu}}(\Lambda)$, which will be denoted by $v_{\varrho^{\bar{\mu}}(\Lambda)}$.

\begin{definition}\label{def_Demazure}
For any $\bar\mu\in X_*(T)_\tau$ and dominant weight $\Lambda$ of $\hat{L}(\mf{g},\tau)$, we define the affine Demazure module $D^\tau(\Lambda,\bar\mu)$ as follows,
\[D^\tau(\Lambda,\bar\mu):=\U(\hat{\mf{b}}^\tau)\cdot v_{\varrho^{\bar{\mu}}(\Lambda)}\subseteq V(\varsigma_{\kappa}(\Lambda)),\]
where $\hat{\mf{b}}^\tau:=\mf{b}^\tau\oplus (t\mf{g}[[t]])^\tau\oplus \mb{C} K\oplus \mb{C} d$ is a Borel subalgebra of $\hat{L}(\mf{g},\tau)$. 

When $\tau$ is trivial, for any coweight $\mu\in X_*(T)$ and dominant weight $\Lambda$ of $\hat{L}(\mf{g})$, the Demazure module will be denoted by $D(\Lambda,\mu)$. In particular, when $\mu$ is dominant, $D(c\Lambda_o,\mu)$ is $\mf{g}[t]$-stable, which will also  be denoted by $D(c,\mu)$.
\end{definition}
\begin{remark}
The $\mf{g}[t]$-stable Demazure module $D(c,\mu)$ has a lowest weight vector whose $\mf{h}$ weight is $-c\iota(\mu)$, see \cite{BH,HY}. 
\end{remark}

Let $c$ be the level of the dominant weight $\Lambda$, i.e. $ c=\langle \Lambda, K \rangle$. 
Let $\lambda$ be the restriction of $\Lambda$ to $\mf{h}^\tau$. Consider the element $\vartheta=\frac{1}{c}\nu^{-1}(\lambda)\in \mf{h}^\tau$. Define $\mf{g}_\lambda:=Z_{\mf{g}^\tau}(\vartheta)$ to be the centralizer of $\vartheta$ in $\mf{g}^\tau$. We write $\Lambda=\sum_{i\in \hat{I}_\tau}n_i \Lambda_i$. Set 
$$Y:=\{i\in\hat{I}_\tau\st n_i\neq 0\}$$ 
\begin{lem}\label{2375423}
The element $\vartheta$ is an rational interior point in the facet $F_{Y}$ associated to $Y$. Further more, we have $\mr{Lie}(G_{\vartheta})=\mf{g}_\lambda$.
\end{lem}
\begin{proof}
By direct computation, we have 
$$\textstyle \vartheta=\frac{1}{c}\nu^{-1}(\lambda)=\sum_{i\in I_\tau} \frac{n_i}{c}\nu^{-1}(\omega_i)=\sum_{i\in I_\tau} \frac{n_i}{c}\cdot\frac{\check{a}_i}{a_i}\check{\omega}_i=\sum_{i\in Y} \frac{n_i}{c}\cdot\frac{\check{a}_i}{a_i}\check{\omega}_i,$$
where by convention $\check{\omega}_o=0$.
Since  $\sum_{i\in Y} \frac{n_i\check{a}_i}{c}=1$, $\vartheta$ is an rational interior point in the facet $F_{Y}$ spanned by $\{\frac{\check{\omega}_i}{a_i}\st i\in Y\}$.
By \eqref{eq_Glambda}, we have 
$$ \textstyle \mr{Lie}(G_{\vartheta})=Z_{\mf{g}^\tau}(\frac{m}{r}\vartheta)=\mf{g}_\lambda,$$ 
where $m$ is the minimal positive multiple of $r$ so that $\frac{m}{r}\vartheta\in X_*(T_{\mr{ad}})^\tau$. 
\end{proof}

Now, we state the main theorem of this section. 
\begin{thm}\label{589379}
For any dominant coweight $\mu\in X_*(T)$, there is an isomorphism of $\mf{g}_\lambda$-modules
$$D(c,\mu)\otimes \mb{C}_\lambda\simeq \sum_{w\in W^\tau} D^\tau(\Lambda,w(\bar\mu)),$$
where $\mb{C}_\lambda$ is the $1$-dimensional representation of $\mf{g}_\lambda$.
\end{thm}

Consider the partial affine flag variety $\Fl_Y$ corresponding to $Y$. Recall \eqref{eq_sckappa}, we have an isomorphism $\Fl_Y^{\kappa}\simeq \Fl'_{Y_\kappa}$ of homogeneous spaces of $G'((z))^\tau$. Moreover, $\overline{\Fl}_{Y,\bar\mu}$ is contained in $\Fl_Y^{\kappa}$. Let $\ms{L}'(\varsigma_\kappa(\Lambda))$ be the ample line bundle on $\Fl'_{Y_\kappa}$ associated to $\varsigma_\kappa(\Lambda)$. Pulling back via the isomorphism \eqref{eq_sckappa}, we get an line bundle on $\Fl_Y^{\kappa}$, denoted by $\ms{L}(\Lambda)_\kappa$.

The following theorem is due to Kumar \cite{Ku87} and Mathieu \cite{Ma88}.
\begin{thm}\label{329825}
\begin{enumerate}
\item For any $\bar\mu\in X_*(T)_\tau$, there is a isomorphism of $\hat{\mf{b}}^\tau$-modules
\[ H^0(\overline{\Fl}_{Y,\bar\mu},\ms{L}(\Lambda)_\kappa)^\vee\simeq D^\tau(\Lambda,\bar\mu). \]
\item For any dominant coweight $\mu\in X_*(T)$, there is an isomorphism of $\mf{g}[t]$-modules
$$H^0(\Grb_{G,\mu},L^c)^\vee\simeq D(c,\mu),$$
where $L$ denotes the level one line bundle on $\Gr_G$.
\end{enumerate}
\end{thm}
\begin{proof}
(1) Recall that the isomorphism \eqref{eq_sckappa} restricts to an isomorphism  $\overline{\Fl}_{Y,\bar\mu}\simeq \overline{\Fl}'_{Y_\kappa, \varrho^{\bar\mu}\varsigma_\kappa^{-1}}$, see \eqref{523012}. By \cite[Theorem 8.2.2 (a)]{Ku} and \cite[\S 9f]{PR08},  the space 
$$H^0(\overline{\Fl}_{Y,\bar\mu},\ms{L}(\Lambda)_\kappa)^\vee\simeq H^0(\overline{\Fl}'_{Y_\kappa, \varrho^{\bar\mu}\varsigma_\kappa^{-1}},\ms{L}'(\varsigma_\kappa(\Lambda)))^\vee$$
is isomorphic to the affine Demazure module $D^\tau(\Lambda,\bar\mu)$.

(2)  Let $\tau$ be trivial and take $Y=\{o\}$. Then, $\Fl_{Y}$ is the affine Grassmannian $\Gr_G$. Moreover, for any dominant coweight $\mu\in X_*(T)$, we have $$\Grb_{G,\mu}={}_Y\overline{\Fl}_{Y,\varrho^\mu}=\overline{\Fl}_{Y,\mu},$$ 
where the first equality is by definition and the second equality follows from Corollary \ref{5982347}. Let $\Gr_G^\kappa:=\Fl_Y^\kappa$ be the connected component containing $\Grb_{G,\mu}$ (equivalently $\varrho^\mu\varsigma_\kappa^{-1}\in W_{\mr{aff}}$).  Note that the restriction of $L$ to $\Gr_G^\kappa$ is the line bundle $\ms{L}(\Lambda_o)_\kappa$ on $\Fl_Y^\kappa$.  By part (1), we get 
\[H^0(\Grb_{G,\mu},L^c)^\vee=H^0(\overline{\Fl}_{Y,\mu},\ms{L}(c\Lambda_o)_\kappa)^\vee\simeq D(c\Lambda_o,\mu)=D(c,\mu). \]
\end{proof}

Recall the variety $\mc{A}_Y(\bar\mu)$ defined in \eqref{adm_def},
$$\mc{A}_Y(\bar\mu):=\bigcup_{w\in W^\tau}\overline{\Fl}_{Y,w(\bar\mu)}\subseteq \Fl_Y^\kappa.$$  

\begin{thm}\label{thm_surjective}
Let $\bar\mu\in X_*(T)_\tau$.
\begin{enumerate}
\item \label{5793260} The following restriction map is surjective
\[
H^0(\Fl_Y^\kappa,\ms{L}(\Lambda)_\kappa)\twoheadrightarrow H^0(\mc{A}_Y(\bar\mu),\ms{L}(\Lambda)_\kappa).
\]
\item $H^i(\mc{A}_Y(\bar\mu),\ms{L}(\Lambda)_\kappa)=0$ for $i\geq 1$.
\end{enumerate}
\end{thm}
\begin{proof}
By \cite[Theorem 0.3]{PR08}, any Schubert variety $\overline{\Fl}_{Y,\bar{\mu}}$ is normal and can be identified with a Kac-Moody Schubert variety. Then, Part 1) follows from the same argument as in \cite[Theorem 6.1, Corollary 6.2]{LLM}.  Part 2) similarly follows from the argument in \cite[Theorem 6.4]{LLM}.

 We also sketch a different proof here.  We first consider the partial affine flag variety $\Fl_Y^\kappa$ defined over $\rk$ of characteristic $p \gg 0$.  The variety $\mc{A}_Y(\bar\mu)$ is Frobenius splitting, as Schubert varieties and their unions are Frobenius splitting. Since $\ms{L}(\Lambda)_\kappa$ is ample on $\Fl_Y^\kappa$, Part 1) and 2) are consequences of Frobenius-splitting property.  By upper semi-continuity of sheaf cohomology, we can deduce these results when ${\rm char} (\rk)=0$. 

\end{proof}

\begin{proof}[\textbf{Proof of Theorem \ref{589379}}]
We first note that there is an injective map
\[ H^0(\mc{A}_Y(\bar\mu),\ms{L}(\Lambda)_\kappa)\hookrightarrow \bigoplus_{w\in W^\tau} H^0(\overline{\Fl}_{Y,w(\bar\mu)}, \ms{L}(\Lambda)_\kappa) .\]
Composing with the map in Part (1) of Theorem \ref{thm_surjective} and taking dual, we get 
\[\bigoplus_{w\in W\tau} H^0(\overline{\Fl}_{Y,w(\bar\mu)}, \ms{L}(\Lambda)_\kappa)^\vee \twoheadrightarrow H^0(\mc{A}_Y(\bar\mu),\ms{L}(\Lambda)_\kappa)^\vee \hookrightarrow H^0(\Fl_Y^\kappa,\ms{L}(\Lambda)_\kappa)^\vee.\]
Clearly, the image of the composition map is 
$$\sum_{w\in W^\tau} \mr{Im}\Big(H^0(\overline{\Fl}_{Y,w(\bar\mu)}, \ms{L}(\Lambda)_\kappa)^\vee\hookrightarrow H^0(\Fl_Y^\kappa,\ms{L}(\Lambda)_\kappa)^\vee\Big).$$
We regard $H^0(\overline{\Fl}_{Y,w(\bar\mu)}, \ms{L}(\Lambda)_\kappa)^\vee$ as a subspace of $H^0(\Fl_Y^\kappa,\ms{L}(\Lambda)_\kappa)^\vee$, cf.\,\cite[Theorem 8.2.2 (d)]{Ku}. Then, $$H^0(\mc{A}_Y(\bar\mu),\ms{L}(\Lambda)_\kappa)^\vee\simeq \sum_{w\in W^\tau} H^0(\overline{\Fl}_{Y,w(\bar\mu)}, \ms{L}(\Lambda)_\kappa)^\vee\subseteq H^0(\Fl_Y^\kappa,\ms{L}(\Lambda)_\kappa)^\vee .$$ 

Now, the theorem follows from Theorem \ref{thm_mainthm} and Theorem \ref{329825}. 
\end{proof}

\subsection{Bruhat order on $Y$-admissible sets and maximal elements}
\label{sect_order}
Recall the element $\check{\theta}_0\in \mf{h}^\tau$ defined in \eqref{eq_theta0check} and the map $A:\check{P}_\tau \to \mf{h}^\tau$ in Section \ref{sec_liealgebra}. Let $\check{M}$ be the lattice spanned by $W^\tau\cdot \check\theta_0$. Then, $A$ restricts to an isomorphism $\check{Q}_\tau\simeq  \check{M}$, see the proof in \cite[Proposition A.1.8]{DH}.

Recall the root $\theta^0$ of $\mf{g}$ introduced in Section \ref{section_flag}  (also in
\cite[\S 8.3]{Kac90}). We have $\theta^0|_{\mf{h}^\tau }=\theta_0$, see\,\eqref{eq_theta}. 
Let $\check{\theta}^0$ denote the coroot of $\theta^0$. Define the following element in $\check{Q}$,
\[ \mu^0:=
\begin{cases}
\check{\theta}_0, & \text{ when } r=1\\
\check{\theta}^0, & \text{ when } r\neq 1 \text{ and } X_N^{(r)}\neq A_{2\ell}^{(2)}\\
\check{\alpha}_1+\cdots +\check{\alpha}_{\ell}, & \text{ when } X_N^{(r)}=A_{2\ell}^{(2)},
\end{cases}\]
where we label the vertices of $\mf{g}=A_{2\ell}$ by $1,\ldots,2\ell$.

Let $\mu_0\in \check{Q}_\tau$ be the element such that $A(\mu_0)=\check{\theta}_0\in \check{M}$, and let $\overline{\mu^0}$ denote the image of $\mu^0$ under the map $\check{P}\to \check{P}_\tau$.

\begin{lem}
$\mu_0=\overline{\mu^0}$.
\end{lem}
\begin{proof}
Since $A:\check{Q}_\tau\simeq  \check{M}$ is an isomorphism, it suffices to show that  $A(\overline{\mu^0})=\check{\theta}_0$.

When $r=1$, the map $A$ is the identity map. Hence, $A(\overline{\mu^0})=\overline{\mu^0}=\mu^0=\check{\theta}_0$.

When $r>1$ and $X_N^{(r)}\neq A_{2\ell}^{(2)}$, since $\mf{g}$ is simply-laced, we have $(\theta^0,\theta^0)=2$ under the normalized Killing form on $\mf{g}$. For any $h\in \mf{h}^\tau$, we have 
$$ \langle h, \nu(A(\overline{\mu^0}))\rangle = (h, \mu^0)=(h, \check{\theta}^0)=\langle h, \theta^0\rangle,$$ 
where the first equality is by Lemma \ref{numu}. 
Therefore, $\nu(A(\overline{\mu^0}))=\theta^0|_{\mf{h}^\tau}=\theta_0$. By equation \eqref{523969}, we get $A(\overline{\mu^0})=\nu^{-1}(\theta_0)=\check{\theta}_0$.

When $X_N^{(r)}= A_{2\ell}^{(2)}$, the simple coroots of $\mf{g}^\tau$ are $\check{\beta}_i=\check{\alpha}_{i}+\check{\alpha}_{2\ell+1-i}, (1\leq i<\ell)$, and $\check{\beta}_\ell=2(\check{\alpha}_{\ell}+\check{\alpha}_{\ell+1})$, see \cite[\S8.3 Case 5]{Kac90}. Hence, 
\[\textstyle\pushQED{\qed} A(\overline{\mu^0})=\mu^0+\tau(\mu^0)=\check{\alpha}_1+\cdots +\check{\alpha}_{2\ell}=\frac{1}{2}\cdot(2\check{\beta}_1+\cdots +2\check{\beta}_{\ell-1}+\check{\beta}_\ell)=\check\theta_0. \qedhere\]
\end{proof}

\begin{remark}
By \cite{BH,Zhu15},  we can regard $X_*(T)_\tau$ the weight lattice of the reductive group $(G^\vee)^\tau$ where $G^\vee$ is the Langlands dual group of $G$ and $\tau$ is the diagram automorphism on $G^\vee$.  Then, $\mu_0$ is the highest short root of $(G^\vee)^\tau$.  
\end{remark}

Recall the affine Weyl group $W_{\mr{aff}}=\check{Q}_\tau\rtimes W^\tau$ and the extended affine Weyl group $\tilde{W}_{\mr{aff}}=\check{P}_\tau\rtimes W^\tau$. For any $i\in I_\tau$, let $r_i\in W^\tau$ denote the simple reflection associated to $i$. Set $$r_o:=\varrho^{-\mu_0}r_{\theta_0}\in W_{\mr{aff}}.$$
Then, $W_{\mr{aff}}$ is generated by the affine simple reflections $\{r_i\st i\in \hat{I}_\tau\}$. By \eqref{Weyl_action}, the action of $r_o$ on $(\hat{\mf{h}}^\tau)^*$ is given by 
$r_o(x)=x-\langle \check\alpha_o,x\rangle \alpha_o$.

As in Section \ref{Section_main}, we fix a non-empty subset $Y\subseteq \hat{I}_\tau$. Set 
$$\breve{Y}:=\hat{I}_\tau\setminus Y.$$
Let $W_{\breve{Y}}$ be the parabolic subgroup of $W_{\mr{aff}}$ generated by the simple reflections $r_i$, $i\in \breve{Y}$. Let $\preccurlyeq$ be the Bruhat order on $W_{\mr{aff}}$ and $\tilde{W}_{\mr{aff}}$. 
\begin{definition}
For each element $w\in \tilde{W}_{\mr{aff}}$, the coset $wW_{\breve{Y}}$ has a unique element with minimal length, denoted by $w_{\min}$. For any $w,v\in  \tilde{W}_{\mr{aff}}$, we define $w\preccurlyeq_Y v$ if $w_{\min}\preccurlyeq v_{\min}$.
\end{definition}

Let $\Lambda=\sum_{i\in Y}n_i\Lambda_i$ be a dominant integrable weight of $\hat{L}(\mf{g},\tau)$ with $n_i>0$. For any $\mu\in \check{P}_\tau$, recall the affine Schubert variety $\overline{\Fl}_{Y,\mu}$ and the affine Demazure module $D^\tau(\Lambda,\mu)$. The following result is well-known, cf.\,\cite[Proposition 3.4 and 3.7]{BH22}
\begin{prop}\label{demazure_order}
Given any $\mu_1, \mu_2\in \check{P}_\tau$, the following are equivalent: 
\begin{enumerate}
    \item $\varrho^{\mu_1}\preccurlyeq_Y \varrho^{\mu_2}$,
    \item $D^\tau(\Lambda,\mu_1)\subseteq D^\tau(\Lambda,\mu_2)$,
    \item $\overline{\Fl}_{Y,\mu_1} \subseteq \overline{\Fl}_{Y,\mu_2}$. \qed
\end{enumerate}
\end{prop}

For any $w\in W_{\mr{aff}}$, let $\bar{w}\in W^\tau$ denote its image under the composition map $W_{\mr{aff}}\to \check{Q}_\tau\backslash W_{\mr{aff}}\simeq W^\tau$. For example, $\bar{r}_o=r_{\theta_0}$ and $\bar{r_i}=r_i$ for $i\neq o$.

\begin{lem}\label{lem_r_i}
Let $\mu\in \check{P}_\tau$.
\begin{enumerate}
\item For any $w\in W_{\mr{aff}}$, we have $w\varrho^\mu w^{-1}=\varrho^{\bar{w}(\mu)}$.
\item Recall the map $\iota: \check{P}_\tau\to (\mf{h}^\tau)^*$. We have 
$$r_i(\iota(\mu))=
\begin{cases}
\iota(\bar{r_i}(\mu)) & \text{ if } i\neq o,\\
\iota(\bar{r}_o(\mu))+\langle\check\theta_0,\iota(\mu)\rangle\delta & \text{ if } i=o.
\end{cases}$$
\end{enumerate}
\end{lem}
\begin{proof}
For any $w\in W_{\mr{aff}}$, we have $w=\varrho^{\eta}\bar{w}$ for some $\eta\in \check{Q}_\tau$. Then, 
\[
    w\varrho^\mu w^{-1}=\varrho^{\eta}\bar{w}\varrho^\mu \bar{w}^{-1}\varrho^{-\eta}=\varrho^{\eta}\varrho^{\bar{w}(\mu)} \varrho^{-\eta}=\varrho^{\bar{w}(\mu)}.
\]
This proves part (1). Part (2) follows from the equation \eqref{Weyl_action} and the fact that $w(\iota(\mu))=\iota(w(\mu))$ for any $w\in W^\tau$.
\end{proof}
\begin{lem}\label{527946}
Let $\mu\in \check{P}_\tau$ and let $\check{\alpha}_i$ be the simple coroot associated to $i\in \hat{I}_\tau$. Then, $\varrho^\mu\prec\varrho^\mu r_i $ if and only if $\langle \check{\alpha}_i,\iota(\mu)\rangle \geq 0$.
\end{lem}
\begin{proof}
Let $\varsigma_\kappa\in \Omega$ be the unique length zero element such that $\varrho^\mu \varsigma_\kappa^{-1}\in W_{\mr{aff}}$. Then, 
$$\varrho^\mu=\varrho^\mu\varsigma_\kappa^{-1}\varsigma_\kappa, \quad \varrho^\mu r_i =\varrho^\mu \varsigma_\kappa^{-1} r_{j}\varsigma_\kappa \ \in W_{\mr{aff}}\cdot \varsigma_\kappa$$ 
where $j = \varsigma_\kappa(i)$. Thus, $\varrho^\mu\prec\varrho^\mu r_i $ if and only if $ \varrho^\mu \varsigma_\kappa^{-1} \prec \varrho^\mu\varsigma_\kappa^{-1}r_{j}$. By \cite[Lemma 3.11 (a)]{Kac90}, the latter one is equivalent to $\varrho^\mu \varsigma_\kappa^{-1}(\alpha_j) > 0$. Note that 
$$\varrho^\mu \varsigma_\kappa^{-1}(\alpha_j) = \varrho^\mu(\alpha_i) = \alpha_i+ (\alpha_i\st \iota(\mu))\delta.$$
Therefore, $\varrho^\mu\prec\varrho^\mu r_i $ if and only if $(\alpha_i\st \iota(\mu))\geq 0$, equivalently $\langle \check{\alpha}_i,\iota(\mu)\rangle \geq 0$.
\end{proof}
\begin{lem}\label{union_simplify}
Given any $\mu\in \check{P}_\tau$ and $i\in \breve{Y}$, we have $\varrho^{\bar{r}_i(\mu)}\preccurlyeq_Y \varrho^{\mu}$ if and only if $\langle \check\alpha_i,\iota(\mu)\rangle\geq 0$. Furthermore, $(\varrho^{\bar{r}_i(\mu)})_{\min}= (\varrho^{\mu})_{\min}$ if and only if $\langle \check\alpha_i,\iota(\mu)\rangle= 0$. 
\end{lem}
\begin{proof}
We first assume that $\langle \check\alpha_i,\iota(\mu)\rangle\geq 0$.  By Lemma \ref{527946}, we have $\varrho^\mu\prec\varrho^\mu r_i $. Since we always have $\ell(\varrho^\mu r_i)= \ell(\varrho^\mu)\pm 1$, it follows that $\ell(\varrho^\mu r_i)= \ell(\varrho^\mu)+1$. Note that $\ell(r_i\varrho^\mu r_i)=\ell(\varrho^{\bar{r}_i\mu})=\ell(\varrho^\mu)=\ell(\varrho^\mu r_i)-1$. Thus, $r_i\varrho^\mu r_i\prec \varrho^\mu r_i$. By \cite[Lemma 1.3.18]{Ku}, we get $(r_i \varrho^\mu r_i)_{\min}\preccurlyeq (\varrho^\mu r_i)_{\min}$. Since $i\in \breve{Y}$, we get 
$(\varrho^{\bar{r}_i(\mu)})_{\min} \preccurlyeq (\varrho^\mu)_{\min}$,
i.e.\,$\varrho^{\bar{r}_i(\mu)}\preccurlyeq_Y \varrho^{\mu}$.

Conversely, assume that $\varrho^{\bar{r}_i(\mu)}\preccurlyeq_Y \varrho^{\mu}$. Suppose that $\langle \check\alpha_i,\iota(\mu)\rangle< 0$. By Lemma \ref{527946}, we have $\varrho^\mu\succ\varrho^\mu r_i $.  Hence, $\ell(\varrho^\mu r_i)= \ell(\varrho^\mu)-1$. Using the same argument as above,  we have $\varrho^\mu r_i \prec r_i\varrho^\mu r_i $. Thus,  $(\varrho^\mu)_{\min}\preccurlyeq (\varrho^{\bar{r}_i(\mu)})_{\min}$. Combining with the assumption that $\varrho^{\bar{r}_i(\mu)}\preccurlyeq_Y \varrho^{\mu}$, we must have $(\varrho^\mu)_{\min}= (\varrho^{\bar{r}_i(\mu)})_{\min}$.  Thus, $\varrho^{\mu-\bar{r_i}(\mu)}\in W_{\breve{Y}}$. Note that $W_{\breve{Y}}$ is a finite group and every element has finite order. It follows that $\mu-\bar{r_i}(\mu)=0$. By part (2) of Lemma \ref{lem_r_i}, we conclude that $\big(\iota(\mu)-r_i(\iota(\mu))\st \alpha_i\big)=0$. Hence, $\langle \check\alpha_i,\iota(\mu)\rangle= 0$, which is a contradiction.  Therefore, $\langle \check\alpha_i,\iota(\mu)\rangle\geq 0$. This finishes the proof of the first part of the lemma.

From the converse part of the above proof, we can see that $(\varrho^\mu)_{\min}= (\varrho^{\bar{r}_i(\mu)})_{\min}$ implies $\langle \check\alpha_i,\iota(\mu)\rangle= 0$. Conversely, if $\langle \check\alpha_i,\iota(\mu)\rangle= 0$, then $r_i(\iota(\mu))=\iota(\mu)$. By Lemma \ref{lem_r_i},  $\iota(\bar{r_i}(\mu))=\iota(\mu)$, equivalently $\bar{r_i}(\mu)=\mu$. Thus, $\varrho^{\bar{r}_i(\mu)}=\varrho^\mu$.
\end{proof}
We remark that $\langle \check\alpha_o,\iota(\mu)\rangle=\langle -\check\theta_0,\iota(\mu)\rangle$ for any $\mu\in \check{P}_\tau$.
\begin{definition}
Set $\beta_i:=\alpha_i$ for $i\neq o$ and $\beta_o:=-\theta_0$. Then, the set $\Pi_{\breve{Y}} = \{\beta_i\}_{i\in \breve{Y}}$ form a base of the root system of $G^{\sigma,\circ}$, see Proposition \ref{eq_levi}. We denote this root system by $\Phi_{\breve{Y}}$, where $\breve{Y}:=\hat{I}_\tau\setminus Y$. Let  $\Phi_{\breve{Y}}^+$ and $\Phi_{\breve{Y}}^-$ denote the sets of positive roots and negative roots of  $\Phi_{\breve{Y}}$ with respect to the base $\Pi_{\breve{Y}}$.

We say $\mu \in \check{P}_\tau$ is {\bf $\breve{Y}$-dominant} if $\langle \check{\beta},\iota(\mu)\rangle\geq 0$ for any $\beta\in \Phi_{\breve{Y}}^+$.
\end{definition}

\begin{lem}\label{432895346}
Assume that $\eta \in \check{P}_\tau$ is $\breve{Y}$-dominant. Then, $(\varrho^\eta)_{\min} = \varrho^\eta$.
\end{lem}
\begin{proof}
We prove it by contradiction. Suppose that $(\varrho^\eta)_{\min} \neq \varrho^\eta$. Then, there is a unique nontrivial element $w\in W_{\breve{Y}}$ such that $\varrho^\eta = (\varrho^\eta)_{\min} \cdot w$ and $\ell(\varrho^\eta)=\ell((\varrho^\eta)_{\min})+\ell(w)$. Let $w= r_{i_1}\cdots r_{i_k}$ be a reduced word expression, where $i_1,\ldots,i_k\in \breve{Y}$. Note that $k=\ell(w) \geq 1$. Moreover, $\varrho^\eta r_{i_k} = (\varrho^\eta)_{\min} \cdot r_{i_1}\cdots r_{i_{k-1}}$. Thus, $$\ell(\varrho^\eta r_{i_k})= \ell((\varrho^\eta)_{\min})+k-1< \ell(\varrho^\eta).$$
However, since $\eta$ is $\breve{Y}$-dominant, we have $\langle \check{\beta}_{i_k},\iota(\eta)\rangle\geq 0$, equivalently $\langle \check{\alpha}_{i_k},\iota(\eta)\rangle\geq 0$. By Lemma \ref{527946}, we get $\ell(\varrho^\eta r_{i_k})>\ell( \varrho^\eta)$, which is a contradiction. Therefore, $(\varrho^\eta)_{\min} = \varrho^\eta$.
\end{proof}

\begin{prop}\label{append_order}
Let $\mu\in \check{P}_\tau$ and $w\in W_{\breve{Y}}$. If $\langle \check{\beta},\iota(\mu)\rangle\geq 0$ for any $\beta\in \Phi_{\breve{Y}}^+$ with $\bar{w}(\beta)\in \Phi_{\breve{Y}}^-$, then we have
\begin{enumerate}
    \item $\varrho^{\bar{w}(\mu)}\preccurlyeq_Y \varrho^{\mu}$,
    \item $D^\tau(\Lambda,\bar{w}(\mu))\subseteq D^\tau(\Lambda,\mu)$,
    \item $\overline{\Fl}_{Y,\bar{w}(\mu)} \subseteq \overline{\Fl}_{Y,\mu}$.
\end{enumerate}
\end{prop}
\begin{proof}
By Proposition \ref{demazure_order}, it suffices to prove Part (1). Let $\bar{W}_{\breve{Y}}$ be a subgroup of $W^\tau$ defined as follows,
$$\bar{W}_{\breve{Y}}:=\{\bar{w} \st w\in W_{\breve{Y}}\}.$$ 
Then, $\bar{W}_{\breve{Y}}$ is generated by $\{\bar{r}_{i} \st i\in \breve{Y}\}$. Note that for any $i\in \breve{Y}$ and $\beta\in  \Phi_{\breve{Y}}$, we have 
$\bar{r_i}(\beta)=\beta-\langle\check\beta_i,\beta \rangle \beta_i$. Hence, $\bar{W}_{\breve{Y}}$ is the Weyl group of $\Phi_{\breve{Y}}$ with simple generators $\{\bar{r_i}\st i\in \breve{Y}\}$. Let $\bar{w}=\bar{r}_{i_n}\cdots \bar{r}_{i_1}$ be a reduced word in $\bar{W}_{\breve{Y}}$, where $i_1,\cdots, i_n\in \breve{Y}$. Set $\bar{w}_0=e$ and $\bar{w}_k:=\bar{r}_{i_{k}}\bar{w}_{k-1}$ for $1\leq k\leq n$. Then, for any $k$, we have
$$\bar{w}(\bar{w}_{k-1}^{-1}(\beta_{i_k}))=\bar{r}_{i_n}\cdots \bar{r}_{i_k}(\beta_{i_k})\in \Phi_{\breve{Y}}^-.$$
By assumption,  $\langle \bar{w}_{k-1}^{-1}(\check{\beta}_{i_k}),\iota(\mu)\rangle \geq 0$. By Lemma \ref{union_simplify}, $\varrho^{\bar{w}_{k}\mu}\preccurlyeq_Y \varrho^{\bar{w}_{k-1}\mu}$. It follows that 
\[\pushQED{\qed} \varrho^{\bar{w}(\mu)}=\varrho^{\bar{w}_n(\mu)}\preccurlyeq_Y \cdots\preccurlyeq_Y \varrho^{\bar{w}_1(\mu)}\preccurlyeq_Y \varrho^{\mu}.\qedhere \]
\end{proof}

For any $\mu\in \check{P}_\tau$, let $S_{Y,\mu}$ be the following subset of $\check{Q}_\tau$,
$$ S_{Y,\mu}:=\{ \eta \in W^\tau \mu \st \eta \text{ is } \breve{Y}\text{-dominant}\}.$$
Since $\bar{W}_{\breve{Y}}$ is the Weyl group of $\Phi_{\breve{Y}}$, the $\bar{W}_{\breve{Y}}$-orbit at $\mu$ contains a unique $\breve{Y}$-dominant element. Thus, 
\begin{equation}\label{543695}
W^\tau \mu=\bigsqcup_{\eta\in S_{Y,\mu}} \bar{W}_{\breve{Y}}\eta
\end{equation}
\begin{cor}\label{5982347}
Let $\mu\in \check{P}_\tau$.
\begin{enumerate}
\item Assume that $\mu$ is $\breve{Y}$-dominant. For any $w\in W_{\breve{Y}}$, we have $\overline{\Fl}_{Y,\bar{w}(\mu)} \subseteq \overline{\Fl}_{Y,\mu}$. Moreover, ${}_Y\overline{\Fl}_{Y,\varrho^\mu}=\overline{\Fl}_{Y,\mu}.$
\item The union in \eqref{adm_def} can be simplified as follows,
$$\mc{A}_Y(\mu)=\bigcup_{\eta\in S_{Y,\mu}} \overline{\Fl}_{Y,\eta},$$
Moreover, in this decomposition, $\{\overline{\Fl}_{Y,\eta}\st \eta\in S_{Y,\mu}\}$ are pair-wisely distinct irreducible components of $\mc{A}_Y(\mu)$ with dimension $\langle \mu, 2\rho\rangle$. 
\item Among Demazure modules $\{D^\tau(\Lambda, w\mu )  \,|\, w\in W^\tau\}$, the collection $\{D^\tau(\Lambda,\eta)\st \eta\in S_{Y,\mu}\}$ consists of  pair-wisely distinct maximal Demazure modules.
\end{enumerate}
\end{cor}
\begin{proof}
Part (1) follows from Proposition \ref{append_order}, and the fact that ${}_Y\overline{\Fl}_{Y,\varrho^\mu}=\bigcup_{w\in W_{\breve{Y}}} \overline{\Fl}_{Y,\bar{w}(\mu)}$, see \eqref{eq_union} in Lemma \ref{7842349}. For part (2), it suffices to show that the left hand side is contained in the right hand side. Using the decomposition \eqref{543695}, we have
$$\mc{A}_Y(\mu)=\bigcup_{\eta\in W^\tau\mu}\overline{\Fl}_{Y,\eta}= \bigcup_{\eta\in S_{Y,\mu}} \bigcup_{w\in \bar{W}_{\breve{Y}}} \overline{\Fl}_{Y,w(\eta)} \subseteq \bigcup_{\eta\in S_{Y,\mu}} \overline{\Fl}_{Y,\eta},$$
where last inclusion relation is from part (1). Recall that $\dim \mc{A}_Y(\mu) = \langle \mu, 2\rho\rangle$, where $\rho$ is the half sum of positive roots of $G^\tau$. By Lemma \ref{432895346}, we have $(\varrho^\eta)_{\min} = \varrho^\eta$. Then, 
$$\dim \overline{\Fl}_{Y,\eta} = \ell((\varrho^\eta)_{\min})= \langle \eta, 2\rho\rangle= \langle \mu, 2\rho\rangle=\dim \mc{A}_Y(\mu),$$
where the first equality is by \cite[Proposition 0.1]{Ri13}. Therefore, each $\overline{\Fl}_{Y,\eta}$ is an irreducible component of $\mc{A}_Y(\mu)$. Moreover, for distinct $\eta,\eta'\in S_{Y,\mu}$, we have $(\varrho^\eta)_{\min} = \varrho^\eta \neq \varrho^{\eta'} =(\varrho^{\eta'})_{\min}$. By Proposition \ref{demazure_order}, we have $\overline{\Fl}_{Y,\eta}\neq \overline{\Fl}_{Y,\eta'}$.
This proves part (2). The third part can be proved in the same way.
\end{proof}

\appendix
\section{Examples of Theorem \ref{589379} }\label{293489}
In this appendix, we provide some examples to illustrate Theorem \ref{589379}.
\subsection{Untwisted case}
\label{ex_demazure}

Let $\mf{g}=\mr{sl}_2$ and let $\tau$ be trivial. Then, $\hat{L}(\mf{g})$ is the untwisted affine algebra of type $A_1^{(1)}$. We label the vertex of the Dynkin diagram of $\mf{g}$ by $1$. Then, $\hat{I}_\tau=\{o,1\}$. Let $\alpha:=\alpha_1$ and $\check{\alpha}:=\check{\alpha}_1$ be the simple root and coroot of $\mf{g}$. Let $\omega:=\omega_1$ and $\check{\omega}:=\check{\omega}_1$ denote the fundamental weight and coweight. There are two level one fundamental weights $\Lambda_o$ and $\Lambda_1=\Lambda_o+\omega$ of $\hat{L}(\mf{g})$. 

\begin{enumerate}
\item We take $\Lambda=\Lambda_1$ and $\mu=\check{\alpha}$ in Theorem \ref{589379}. In this case, $c=1$ and $\lambda=\omega$. We have the following isomorphism of $\mf{h}$-modules,
$$D(1,\check\alpha)\otimes \mb{C}_{\omega}\simeq D(\Lambda_1,-\check\alpha)+D(\Lambda_1,\check\alpha).$$
Note that $Y=\{1\}$ and $-\check\alpha$ is $\breve{Y}$-dominant. By Corollary \ref{5982347}, the above isomorphism can be simplified to the isomorphism $D(1,\check\alpha)\otimes \mb{C}_{\omega}\simeq D(\Lambda_1,-\check\alpha)$. In fact, by \eqref{Weyl_action}, we have 
\[\varrho^{\pm\check\alpha}(\Lambda_o)=\Lambda_o\mp\alpha-\delta;\quad \varrho^{\check\alpha}(\Lambda_1)=\Lambda_1-\alpha,\quad \varrho^{-\check\alpha}(\Lambda_1)=\Lambda_1+\alpha-2\delta.\] 
The modules $D(1,\check\alpha):=D(\Lambda_o,\check\alpha)$ and $D(\Lambda_1,-\check\alpha)$ are $4$-dimensional $\hat{\mf{b}}$-modules as shown in Figure \ref{pic_1a} and \ref{pic_1b}. 
\begin{figure}[H]
\centering
\begin{subfigure}[b]{0.5\textwidth}
\centering
\begin{tikzpicture}[scale=0.9]
\draw[thick, scale=1, domain=-1.6:1.6, smooth, variable=\x, black] plot ({\x}, {-\x*\x});
\fill (0,0) circle (2.5pt);
\draw(0,0) node[anchor=south]{$\Lambda_o$};
\foreach \p in {-1,...,1}{\fill (\p,-1) circle (2.5pt);}
\draw(-1,-1) node[anchor=east]{$\hspace{-1em}\varrho^{\check\alpha}(\Lambda_o)$};
\foreach \p in {-1,...,1}{\fill (\p,-2) circle (0.5pt);}
\end{tikzpicture}
\caption{$D(1,\check\alpha)$}\label{pic_1a}
\end{subfigure}%
\begin{subfigure}[b]{0.5\textwidth}
\centering
\begin{tikzpicture}[scale=0.9]
\draw[thick, scale=1, domain=-1.65:1.65, smooth, variable=\x, black] plot ({\x}, {0.25-\x*\x});
\foreach \p in {-0.5,0.5}{\fill (\p,0) circle (2.5pt);}
\draw(0.5,0) node[anchor=south west]{$\Lambda_1$};
\draw(-0.5,0) node[anchor=east]{$\varrho^{\check\alpha}(\Lambda_1)$};
\fill (-0.5,-1) circle (0.5pt);
\fill (0.5,-1) circle (2.5pt);
\foreach \p in {-1.5,...,0.5}{\fill (\p,-2) circle (0.5pt);}
\fill (1.5,-2) circle (2.5pt);
\draw(1.5,-2) node[anchor=west]{$\varrho^{-\check\alpha}(\Lambda_1)\hspace{-2em}$};
\end{tikzpicture}
\caption{$D(\Lambda_1,-\check\alpha)$}\label{pic_1b}
\end{subfigure}
\caption{}\label{pic_1}
\end{figure}

\item We take $\Lambda=\Lambda_o+\Lambda_1=2\Lambda_o+\omega$ and $\mu=\check{\omega}$ in Theorem \ref{589379}. In this case, $c=2,\lambda=\omega$. We get an isomorphism of $\mf{h}$-modules
$$D(2, \check{\omega})\otimes \mb{C}_\omega \simeq D(\Lambda_o+\Lambda_1,-\check\omega)+ D(\Lambda_o+\Lambda_1,\check\omega).$$
In fact, we have $\Omega=\{\mr{id}, \varsigma_{\check{\omega}}\}$, where $\varsigma_{\check{\omega}}=\varrho^{-\check{\omega}}s_\alpha$. Note that $\varsigma_{\check{\omega}}^{-1}=\varsigma_{\check{\omega}}$. Then, 
$\varrho^{\check{\omega}}\varsigma_{\check{\omega}}^{-1}=s_\alpha\in W_{\mr{aff}}$ and $\varrho^{-\check{\omega}}\varsigma_{\check{\omega}}^{-1}=\varrho^{-\check{\alpha}}s_\alpha\in W_{\mr{aff}}$. Hence, the modules $D(\Lambda_o+\Lambda_1,-\check\omega)$ and $D(\Lambda_o+\Lambda_1,\check\omega)$ are subspaces of $V(\varsigma_{\check{\omega}}(\Lambda))=V(\Lambda)$. By \eqref{Weyl_action}, we have 
$$\varrho^{\check{\omega}}(\Lambda)=\Lambda-\alpha,\quad \varrho^{-\check{\omega}}(\Lambda)=\Lambda+\alpha-\delta.$$
The modules $D(\Lambda_o+\Lambda_1,-\check\omega)$ and $D(\Lambda_o+\Lambda_1,\check\omega)$ are $2$-dimensional spaces as shown in Figure \ref{pic_2b} labeled by red and blue colors respectively. They intersect at a $1$-dimensional subspace. Hence, the sum of two spaces $D(\Lambda_o+\Lambda_1,-\check\omega)+ D(\Lambda_o+\Lambda_1,\check\omega)$ is a $3$-dimensional $\hat{\mf{b}}$-module. 

On the other hand,  $D(2,\check{\omega})$ is a subspace of $V(2\Lambda_1)$ generated by an extremal weight vector of weight
$\varrho^{\check{\omega}}(2\Lambda_o)=2\Lambda_o-\alpha.$ 
Thus,  $D(2,\check{\omega})$ is a $3$-dimensional $\mf{g}[t]$-module shown in Figure \ref{pic_2a}.

\begin{figure}[H]
\centering
\begin{subfigure}[b]{0.4\textwidth}
\centering
\begin{tikzpicture}[scale=0.9]
\draw[thick, scale=1, domain=-2:2, smooth, variable=\x, black] plot ({\x}, {0.5-0.5*\x*\x});
\foreach \p in {-1,...,1}{\fill (\p,0) circle (2.5pt);}
\draw(1,0) node[anchor=south west]{$2\Lambda_1$};
\draw(-1,0) node[anchor=east]{$2\Lambda_o-\alpha$};
\foreach \p in {-1,0,1}{\fill (\p,-1) circle (0.5pt);}
\end{tikzpicture}
\caption{$D(2,\check{\omega})\hspace{-2em}$}\label{pic_2a}
\end{subfigure}%
\begin{subfigure}[b]{0.6\textwidth}
\centering
\begin{tikzpicture}[scale=0.9]
\draw[thick, scale=1, domain=-1.9:1.9, smooth, variable=\x, black] plot ({\x}, {0.15-0.5*\x*\x});
\foreach \p in {-0.5}{\fill[blue] (\p,0) circle (2.5pt);}
\draw(0.5,0) node[anchor=south west]{$\Lambda_o+\Lambda_1$};
\filldraw[color=red, fill=blue, very thick] (0.5,0) circle (2.5pt);
\fill[red] (1.5,-1) circle (2.5pt);
\foreach \p in {-1.5,...,1.5}{\fill (\p,-1) circle (0.5pt);}
\draw(-0.5,0) node[blue,anchor=east]{$\varrho^{\check{\omega}}(\Lambda_o+\Lambda_1)$};
\draw(1.5,-1) node[red,anchor=west]{$\varrho^{-\check\omega}(\Lambda_o+\Lambda_1)\hspace{-3em}$};
\end{tikzpicture}
\caption{$D(\Lambda_o+\Lambda_1,-\check\omega)+ D(\Lambda_o+\Lambda_1,\check\omega)$}\label{pic_2b}
\end{subfigure}
\caption{}\label{pic_2}
\end{figure}

\item We take $\Lambda=\Lambda_o+\Lambda_1$ and $\mu=\check{\alpha}$ in Theorem \ref{589379}. Then, $c=2,\lambda=\omega$. We get an isomorphism of $\mf{h}$-modules
$$D(2, \check{\alpha})\otimes \mb{C}_\omega \simeq D(\Lambda_o+\Lambda_1,-\check\alpha)+ D(\Lambda_o+\Lambda_1,\check\alpha).$$
In fact, by \eqref{Weyl_action}, we have 
$$\varrho^{\check\alpha}(2\Lambda_o)=2\Lambda_o-2\alpha-2\delta;\quad \varrho^{\check\alpha}(\Lambda)=\Lambda-2\alpha-\delta,\quad \varrho^{-\check\alpha}(\Lambda)=\Lambda+2\alpha-3\delta.$$
The module $D(2,\check\alpha):=D(2\Lambda_o,\check\alpha)$ is a $9$-dimensional $\mf{g}[t]$-module as shown in Figure \ref{pic_3a}. Moreover, the  modules $D(\Lambda_o+\Lambda_1,-\check\alpha)$ and $D(\Lambda_o+\Lambda_1,\check\alpha)$ are $6$-dimensional spaces shown in Figure \ref{pic_3b} labeled by red and blue colors respectively. They intersect at a $3$-dimensional subspace. Hence, the sum of two spaces $D(2\Lambda_o+\omega,-\check\alpha)+ D(2\Lambda_o+\omega,\check\alpha)$ is a $9$-dimensional $\hat{\mf{b}}$-module. 
\begin{figure}[H]
\centering
\begin{subfigure}[b]{0.4\textwidth}
\centering
\begin{tikzpicture}[scale=0.8]
\draw[thick, scale=1, domain=-2.6:2.6, smooth, variable=\x, black] plot ({\x}, {-0.5*\x*\x});
\fill (0,0) circle (2.5pt);
\draw(0,0) node[anchor=south]{$2\Lambda_o$};
\foreach \p in {-1,...,1}{\fill (\p,-1) circle (2.5pt);}
\foreach \p in {-2,...,2}{\fill (\p,-2) circle (2.5pt);}
\draw(-2,-2) node[anchor=east]{$\varrho^{\check\alpha}(2\Lambda_o)$};
\foreach \p in {-2,...,2}{\fill (\p,-3) circle (0.5pt);}
\end{tikzpicture}
\caption{$D(2,\check\alpha)\hspace{-2em}$}\label{pic_3a}
\end{subfigure}%
\begin{subfigure}[b]{0.6\textwidth}
\centering
\begin{tikzpicture}[scale=0.8]
\draw[thick, scale=1, domain=-2.7:2.7, smooth, variable=\x, black] plot ({\x}, {0.15-0.5*\x*\x});
\foreach \p in {-0.5,0.5}{\filldraw[color=red, fill=blue, very thick] (\p,0) circle (2.5pt);}
\draw(0.5,0) node[anchor=south west]{$\Lambda_o+\Lambda_1$};
\foreach \p in {-1.5,...,-0.5}{\fill[blue] (\p,-1) circle (2.5pt);}
\fill[blue] (0.4,-1) circle (2.5pt);
\fill[red] (0.6,-1) circle (2.5pt);
\filldraw[color=red, fill=blue, very thick] (1.5,-1) circle (2.5pt);
\draw(-1.5,-1) node[blue, anchor=east]{$\varrho^{\check\alpha}(\Lambda_o+\Lambda_1)$};
\fill[red] (1.5,-2) circle (2.5pt);
\foreach \p in {-1.5,...,1.5}{\fill (\p,-2) circle (0.5pt);}
\fill[red] (2.5,-3) circle (2.5pt);
\foreach \p in {-2.5,...,2.5}{\fill (\p,-3) circle (0.5pt);}
\draw(2.5,-3) node[red, anchor=west]{$\varrho^{-\check\alpha}(\Lambda_o+\Lambda_1)\hspace{-2em}$};
\end{tikzpicture}
\caption{$D(\Lambda_o+\Lambda_1,-\check\alpha)+ D(\Lambda_o+\Lambda_1,\check\alpha)$}\label{pic_3b}
\end{subfigure}
\caption{}\label{pic_3}
\end{figure}
\end{enumerate}

\subsection{Twisted case}
Let $\mf{g}=\mr{sl}_3$, $G=\mr{PGL}_3$, and let $\hat{L}(\mf{g})$ be the untwisted affine algebra of type $A_2^{(1)}$. We label the vertices of  the Dynkin diagram of $\mf{g}$ by $1,2$. Then, $\hat{I}=\{o,1,2\}$. Let $\alpha_1,\alpha_2$ and $\check\alpha_1,\check\alpha_2$ denote the simple roots and coroots of $\mf{g}$ respectively. Let $\check\omega_1,\check\omega_2$ denote the fundamental coweights of $\mf{g}$.  Then, $\Omega=\{\mr{id}, \varsigma_{\check{\omega}_1}, \varsigma_{\check{\omega}_2}\}$, where $\varsigma_{\check{\omega}_1}= \varrho^{-\check{\omega}_1}s_{\alpha_1}s_{\alpha_2}$ and $\varsigma_{\check{\omega}_2}= \varrho^{-\check{\omega}_2}s_{\alpha_2}s_{\alpha_1}$. Consider the dominant weight $2\bold{\Lambda}_o$, where $\bold{\Lambda}_o$ is the fundamental weight of $\hat{L}(\mf{g})$ associated to $o\in \hat{I}$. Note that $\varsigma_{\check{\omega}_1}^{-1}=\varsigma_{\check{\omega}_2}$. We have $\varrho^{\check{\omega}_2}\varsigma_{\check{\omega}_1}^{-1}= s_{\alpha_2}s_{\alpha_1}\in W_{\mr{aff}}$. Then, $D(2,\check{\omega}_2):=D(2\bold{\Lambda}_o, \check{\omega}_2)$ is a subspace of $V(2\bold{\Lambda}_1)$ generated by an extremal weight vector of weight
$\varrho^{\check{\omega}_2}(2\bold{\Lambda}_o)=2\bold{\Lambda}_o-2\omega_2.$ 
Thus, $D(2,\check{\omega}_2)$ has only one layer at degree $0$ as shown in Figure \ref{pic_4a}. 

Let $\tau$ be the nontrivial diagram automorphism on $\mf{g}$. Let $\hat{L}(\mf{g},\tau)$ be the twisted affine algebra of type $A_2^{(2)}$ and $\mf{g}^\tau\simeq \mr{sl}_2$. We label the vertex of the Dynkin diagram of $\mf{g}^\tau$ by $1$. Then, $\hat{I}_\tau=\{o,1\}$. Let $\beta:=\alpha_1|_{\mf{h}^\tau}$ be the restriction of the simple root $\alpha_1$ of $\mf{g}$ to $\mf{h}^\tau$. Then, $\beta$ is the simple root of $\mf{g}^\tau$. There are two level two dominant weights $\Lambda_o$ and $2\Lambda_1=\Lambda_o+\beta$ of $\hat{L}(\mf{g},\tau)$, where $\Lambda_i$ is the fundamental dominant weight of $\hat{L}(\mf{g},\tau)$ associated to $i\in \hat{I}$. Consider the fundamental coweight $\check\omega_2\in X_*(T)$. Let $\overline{\check{\omega}}_2$ denote its image in $ X_*(T)_\tau$. Then, $\iota\big(\overline{\check{\omega}}_2\big)=\beta$, see Section \ref{sec_liealgebra}. Note that $\overline{\check{\omega}}_2=\overline{\check{\alpha}}_2$ and hence $\varrho^{\pm\overline{\check{\omega}}_2}=\varrho^{\pm\overline{\check{\alpha}}_2} \in  W_{\mr{aff}}$.  By \eqref{Weyl_action}, we have 
\[\varrho^{\pm\overline{\check{\omega}}_2}(\Lambda_o)=\Lambda_o\mp2\beta-\delta;\quad \varrho^{\overline{\check{\omega}}_2}(2\Lambda_1)=2\Lambda_1-2\beta,\quad \varrho^{-\overline{\check{\omega}}_2}(2\Lambda_1)=2\Lambda_1+2\beta-2\delta.\] 
The twisted Demazure modules $D^\tau(\Lambda_o,\overline{\check{\omega}}_2)$ and $D^\tau(2\Lambda_1,-\overline{\check{\omega}}_2)$ are $6$-dimensional as shown in Figure \ref{pic_4}. \\ Now, we have following examples of Theorem \ref{589379}:
\begin{enumerate}
\item We take $\Lambda=\Lambda_o$ and $\mu=\check{\omega}_2$. Then, $c=2,\lambda=0$. We get the following isomorphism of $\mf{g}^\tau$-modules,
$$D(2, \check{\omega}_2)\simeq D^\tau(\Lambda_o,\overline{\check{\omega}}_2)+D^\tau(\Lambda_o,-\overline{\check{\omega}}_2).$$
Note that $Y=\{o\}$ and $\overline{\check{\omega}}_2$ is $\breve{Y}$-dominant. By Corollary \ref{5982347}, it can be simplified as $D(2, \check{\omega}_2)\simeq D^\tau(\Lambda_o,\overline{\check{\omega}}_2).$
\item We take $\Lambda=2\Lambda_1$ and $\mu=\check{\omega}_2$. Then, $c=2,\lambda=\beta$. We get the following isomorphism of $\mf{h}^\tau$-modules,
$$D(2, \check{\omega}_2)\otimes \mb{C}_\beta\simeq D^\tau(2\Lambda_1,\overline{\check{\omega}}_2)+D^\tau(2\Lambda_1,-\overline{\check{\omega}}_2).$$
Note that $Y=\{1\}$ and $-\overline{\check{\omega}}_2$ is $\breve{Y}$-dominant. By Corollary \ref{5982347}, it can be simplified as 
$D(2, \check{\omega}_2)\otimes k_\beta\simeq D^\tau(2\Lambda_1,-\overline{\check{\omega}}_2).$
\end{enumerate}
\begin{figure}[H]
\centering
\begin{subfigure}[b]{0.2\textwidth}
\centering
\begin{tikzpicture}[scale=0.5]
\draw[thick, -] (1.73,-1)--(-1.73,-1) -- (0,1.73)--(1.73,-1);
\foreach \p in {-1.73,0,1.73}{\fill (\p,-1) circle (3pt);}
\foreach \p in {-0.866,0.866}{\fill (\p,0.37) circle (3pt);}
\fill ((0,1.73) circle (3pt);
\draw(1.73,-1) node[anchor=north]{$2\bold{\Lambda}_1$};
\draw(-1.73,-1) node[anchor=north]{$2\bold{\Lambda}_o-2\omega_2$};
\end{tikzpicture}
\caption{$D(2,\check{\omega}_2)$}\label{pic_4a}
\end{subfigure}%
\begin{subfigure}[b]{0.4\textwidth}
\centering
\begin{tikzpicture}[scale=0.65]
\draw[thick, scale=1, domain=-3.2:3.2, smooth, variable=\x, black] plot ({\x}, {-0.25*\x*\x});
\fill (0,0) circle (3pt);
\draw(0,0) node[anchor=south]{$\Lambda_o$};
\foreach \p in {-2,...,2}{\fill (\p,-1) circle (3pt);}
\draw(-2,-1) node[anchor=east]{$\varrho^{\overline{\check{\omega}}_2}(\Lambda_o)$};
\foreach \p in {-2,...,2}{\fill (\p,-2) circle (0.5pt);}
\end{tikzpicture}
\caption{$D^\tau(\Lambda_o,\overline{\check{\omega}}_2)$}\label{pic_4b}
\end{subfigure}%
\begin{subfigure}[b]{0.4\textwidth}
\centering
\begin{tikzpicture}[scale=0.65]
\draw[thick, scale=1, domain=-3.3:3.3, smooth, variable=\x, black] plot ({\x}, {0.25-0.25*\x*\x});
\foreach \p in {-1,...,1}{\fill (\p,0) circle (3pt);}
\draw(1,0) node[anchor=south west]{$2\Lambda_1$};
\draw(-1,0) node[anchor=east]{$\varrho^{\overline{\check{\omega}}_2}(2\Lambda_1)$};
\foreach \p in {1,2}{\fill (\p,-1) circle (3pt);}
\foreach \p in {-2,...,0}{\fill (\p,-1) circle (0.5pt);}
\fill (3,-2) circle (3pt);
\foreach \p in {-3,...,2}{\fill (\p,-2) circle (0.5pt);}
\draw(3,-2) node[anchor=west]{$\varrho^{-\overline{\check{\omega}}_2}(2\Lambda_1)$};
\end{tikzpicture}
\caption{$D^\tau(2\Lambda_1,-\overline{\check{\omega}}_2)$}\label{pic_4c}
\end{subfigure}
\caption{}\label{pic_4}
\end{figure}

\end{document}